\documentclass[a4paper, 12pt,
 reqno]{amsart}
\usepackage{latexsym,amscd,amssymb,amsmath,amsthm,comment,url}
\usepackage[dvips]{graphicx}
\usepackage[all]{xy}
\xyoption{poly}
\xyoption{knot}
%
%
\headheight=6pt     \topmargin=10pt
\textheight=660pt   \textwidth=430pt
\oddsidemargin=18pt \evensidemargin=18pt
\makeatletter
\newcounter{cntx}
\newcounter{cnty}
\newcounter{endx}
\newcounter{endy}
\def\blackcirclefill<#1>(#2,#3)(#4,#5){
  \setcounter{cnty}{#5}
  \setcounter{endx}{#2}
  \addtocounter{endx}{1}
  \setcounter{endy}{#3}
  \addtocounter{endy}{1}
  \loop
    \setcounter{cntx}{#4}
    {%
      \loop
        \put(\thecntx,\thecnty){\circle*{#1}}  
        \stepcounter{cntx}  
        \ifnum\thecntx < \theendx \repeat
    }%
    \stepcounter{cnty}
  \ifnum\thecnty < \theendy \repeat
}

\def\whitecirclefill<#1>(#2,#3)(#4,#5){
  \setcounter{cnty}{#5}
  \setcounter{endx}{#2}
  \addtocounter{endx}{1}
  \setcounter{endy}{#3}
  \addtocounter{endy}{1}
  \loop
    \setcounter{cntx}{#4}
    {%
      \loop
        \put(\thecntx,\thecnty){\circle{#1}}  
        \stepcounter{cntx}  
        \ifnum\thecntx < \theendx \repeat
    }%
    \stepcounter{cnty}
  \ifnum\thecnty < \theendy \repeat
}

%
%
\theoremstyle{plain}
  \newtheorem{thm}{Theorem}[section]
  \newtheorem{prop}[thm]{Proposition}
  \newtheorem{lem}[thm]{Lemma}
  \newtheorem{cor}[thm]{Corollary}
\theoremstyle{definition}
  \newtheorem{dfn}[thm]{Definition}
  \newtheorem{exmp}[thm]{Example}
\theoremstyle{remark}
  \newtheorem{rem}[thm]{Remark}
%
%
%

%
\let\opn\operatorname 
\let\term\emph
%
%
%
\numberwithin{equation}{section}
%
%
%
%
\def\NN{\mathbb{N}} 
\def\ZZ{\mathbb{Z}} 
\def\RR{\mathbb{R}} 
\def\kk{\Bbbk} 
%
%
\def\m{\mathfrak{m}} 
\def\p{\mathfrak{p}} 
\def\q{\mathfrak{q}}

\def\rH{\widetilde{H}} 
%
%
\let\e\varepsilon 
\let\s\sigma 
\let\C\Sigma 
\let\t\tau 
\let\u\upsilon 
\let\i\iota 
\def\LL{\mathcal L}
\def\MM{\mathcal M} 
\def\M{\mathbf M} 
\def\bL{\mathbf L} 
\def\bE{\mathbf E} 
\def\bF{\mathbf F}
\def\cC{\mathcal C} 
\def\Cec{\check{C}}
\def\for{\mathbf U}
%
%
%
\def\longto{\longrightarrow} 
%
%
\def\bra#1{[#1]} 
\def\mbra#1{\{ #1\}} 
\def\set#1#2{\mbra{\,#1\mid #2\,}} 
%
%
%
%
\def\sM{|\MM |}

\def\supp{\opn{supp}} 
\def\cell{\mathcal X} 
\def\op{\mathsf{op}} 
\def\sph{\mathbb S} 
%
%
%
%
%
\def\idmap{\opn{Id}} 
%
%
\def\fring#1{\kk \bra{#1}} 
%
%
%

%
\def\Mod{\opn{Mod}}    
\def\*Mod{{}^*\!\opn{Mod}}   

\def\Sq{\opn{Sq}}      
\def\InjSq{\opn{Inj-Sq}}
\def\Pic{\opn{Pic}}
\def\EE{{}^*\! E}   
\def\Mgr{\Mod_{\MM}} 
\def\Mbgr{\Mod_{\bMM}}
\def\Lgr{\Mod_{\LL}}
\def\Lvect{\opn{Vect}_{\LL}}

\let\colimit\varinjlim 
\def\Hom{\opn{Hom}}    
\def\Ext{\opn{Ext}}    
\def\inHom{\Hom^\bullet}
\def\int{\opn{int}}   

\def\Db{{\mathsf D}^b} 
\def\Cb{{\mathsf K}^b} 

\def\upl{{}^+\!}
%
%
%
\def\cpx#1{#1^{\bullet}} 
 
%
%
%
%
%
\def\<{{\langle}}
\def\>{{\rangle}}

\def\DC{D^\bullet}

\def\bM{\overline{\M}}
\def\tM{\widetilde{\M}}
\def\bMM{\widetilde{\MM}}
\def\oR{\widetilde{R}}

\def\bR{\mathbf R}
\def\baR{\overline{R}}
\def\ols{\widetilde{\s}}
\def\too{\longrightarrow}
\def\chara{\operatorname{char}}
\makeatother
%
\title[Dualizing complexes of seminormal toric face rings]
{Dualizing complexes of seminormal affine semigroup rings and toric face rings}
\author{Kohji Yanagawa}
\thanks{Partially supported by Grant-in-Aid for Scientific Research (c) (no.19540028).}
\address{Department of Mathematics, Kansai University,
Suita 564-8680, Japan}
\email{yanagawa@ipcku.kansai-u.ac.jp}
%
%
%
%
\begin{document}
%

%
\maketitle

\begin{abstract}
We characterize the {\it seminormality} of an affine semigroup ring  
in terms of the dualizing complex, and the {\it normality} of a Cohen-Macaulay semigroup ring  by the ``shape" of the canonical module. 
We also characterize the seminormality of a toric face ring  
in terms of the dualizing complex. 
A toric face ring is a simultaneous generalization of Stanley-Reisner rings and affine semigroups. 
\end{abstract}

\section{Introduction}
Let $\M$ be a finitely generated additive submonoid of $\ZZ^d$ (i.e., $\M$ is an affine semigroup) with $\ZZ \M \cong \ZZ^d$, 
and $\cC(\M):=\RR_{\geq 0} \M \subset \ZZ^d \otimes_\ZZ \RR \cong \RR^d$ the polyhedral cone spanned by $\M$. Set $\bM := \ZZ \M \cap \cC(\M)$. 
Throughout the paper, we assume that $\M$ is {\it positive}, that is, $\M$ has no invertible element except 0. 

In the former half of the present paper, we study the affine semigroup ring  $\kk[\M]=\bigoplus_{a \in \M} \kk \, 
	x^a$ of $\M$ over a field $\kk$. Now we have $\dim \kk[\M]=d$. 
It is a classical result by Hochster, Stanley and Danilov that if $R=\kk[\M]$ is normal (equivalently,  $\M = \bM$), 
then $R$ is Cohen-Macaulay and the canonical module $\omega_R$ has an easy description (cf.  \cite[Theorem~6.3.5]{BH}).  
On the other hand, the behavior of non-normal affine semigroup rings is delicate and complicated, and many works have been done on this subject.  

\begin{dfn}
Let $A$ be a reduced noetherian commutative ring, and $Q(A)$ its total quotient ring. 
We say $A$ is {\it seminormal},  if $a \in Q(A)$ and $a^2, a^3 \in A$ 
imply $a \in A$. 
\end{dfn}

This notion is much more natural than it seems. 
In fact, it is known that $R$ is seminormal if and only if  $\Pic R \cong \Pic (R[x]) $. 
See \cite{Sw} and the references cited therein. 

The seminormality of  an affine semigroup ring $R=\kk[\M]$ is characterized in 
a combinatorial (resp. homological) way by Reid and Roberts \cite{RR} (resp. 
Bruns, Li and  R\"omer \cite{BLR}).    In the present paper, we will give a new characterization 
using the dualizing complex. 
Our characterization is relatively closer to that in \cite{BLR}.  
However,  contrary to their result, ours does not use the $\ZZ^d$-grading of the local cohomology modules 
(or the dualizing complex).  To introduce our result, we need some preparation. 

For a face $F$ of the cone $\cC(\M)$, 
$\M_F := \M \cap F$ is a submonoid of $\M$. 
The semigroup ring $\kk[\M_F]$ can be seen as a quotient ring 
of  $R$, and  its normalization $\kk[\bM_F]$  
has the natural $R$-module structure. 
Then we have the following complex. 
$$\upl \cpx I_R : 0 \too \upl I^{-d}_R\too \upl I^{-d+1}_R \too \cdots \too \upl I^0_R \too 0,$$
$$\upl I^{-i}_R=\bigoplus_{\substack{ F: \, \text{a face of} \, \cC(\M) \\ \dim F =i}} 
\kk[\bM_F].$$
The differential map $\partial: \upl I^{-i}_R \to \upl I^{-i+1}_R$ is 
the combination of the natural surjections $\kk[\bM_F] \twoheadrightarrow \kk[\bM_G]$ 
for faces $F,G$ with $F \supset G$ and $\dim F=\dim G+1$.  

\medskip

\noindent{\bf Proposition~\ref{+I_R}.} 
{\it For a semigroup ring $R=\kk[\M]$, it  is seminormal if and only if $\upl \cpx I_R$ 
is quasi-isomorphic to the dualizing complex $\cpx D_R$.  }

\medskip

We can characterize the normality of $\kk[\M]$ using the dualizing complex in a similar way. 
As a byproduct of this observation, we have the following (unexpected) result.

\medskip

\noindent{\bf Theorem~\ref{normal}}
{\it For $R=\kk[\M]$, the following are equivalent.
\begin{itemize}
\item[(a)] $R$ is normal. 
\item[(b)] $R$ is Cohen-Macaulay and the canonical module $\omega_R$ is isomorphic to 
the ideal $( \, \, x^a \mid a \in \M \cap \int (\cC(\M)) \, )$ of $R$ 
as (graded or nongraded) $R$-modules.
\end{itemize}
}


\medskip

The implication (a) $\Rightarrow$ (b) is a classical result (see above). 


Stanley-Reisner rings and affine semigroup rings are important subjects of 
combinatorial commutative algebra. 
The notion of {\it toric face rings}, which originated in an earlier work of 
Stanley \cite{St}, generalizes both of them, and has been 
studied by Bruns, R\"omer, and their coauthors (e.g. \cite{BG,BKR, IR}). 
Roughly speaking, to make a toric face ring $\kk[\MM]$ from a (locally) polyhedral CW complex $\cell$, 
we  assign each cell $\s \in \cell$ 
an affine semigroup $\M_\s \subset \ZZ^{\dim \s +1}$, 
and ``glue" their semigroup rings $\kk[\M_\s]$ along with $\cell$. 

Recently, Nguyen \cite{Ngu} studied seminormal toric face rings mainly focusing on  
the local cohomology modules, but he also remarked that $\kk[\MM]$ is seminormal if and only if 
$\kk[\M_\s]$ is seminormal for all $\s$. In this sense, the seminormality is a natural condition  
for toric face rings. 

Generalizing the construction for affine semigroup rings, a toric face ring 
$\kk[\MM]$ of dimension $d$ admits the cochain complex $\upl \cpx I_R$ of the form 
$$
0 \longto \upl I^{-d}_R \longto \upl I^{-d + 1}_R 
\longto \cdots \longto \upl I^0_R \longto 0
$$
with 
$$\upl I^{-i}_R := \bigoplus_{\substack{\s \in \cell \\ \dim \s = i-1}} \kk[\bM_\s],$$
where $\kk[\bM_\s]$ is the normalization of $\kk[\M_\s]$. 

\medskip

\noindent{\bf Theorem~\ref{main}} 
{\it If a toric face ring $R=\kk[\MM]$ is seminormal, then $\upl \cpx I_R$ is quasi-isomorphic to the dualizing complex $\cpx D_R$. 
(The converse is also true. See Proposition~\ref{conv. of main})} 

\medskip

Under the assumption that each $\kk[\MM_\s]$ is normal 
(of course, $\upl I^{-i}_R= \bigoplus_{\dim \s = i-1} \kk[\M_\s],$ 
in this case), the above theorem was proved by the present author and Okazaki 
(\cite[Theorem~5.2]{OY}). Even in this case, the proof requires quite technical argument, 
since $R$ is not a graded ring in the usual sense. 
The proof of Theorem~\ref{main} heavily depends on \cite[Theorem~5.2]{OY},  
but we have to  make more effort.   

Finally, for an arbitrary toric face ring $R=\kk[\MM]$, we study the local cohomology modules $H_\m^i(R)$ 
at the ``graded" maximal ideal $\m$. Let $\upl R$ (resp. $\oR$) be the seminormalization (resp. cone-wise normalization) 
of $R$. Both of them are toric face rings supported by the same CW complex $\cell$ as $R$, 
but the construction of the latter is  not straightforward (see Example~\ref{bMM}). 
In \S6, we show that $H_\m^i(\upl R) \subset H_\m^i(R)$, and 
$H_\m^i(\oR)\ne 0$ implies $H_\m^i(R)\ne 0$. 
Hence we have; 
$$\text{$R$ is Cohen-Macaulay} \Longrightarrow \text{$\upl R$ is Cohen-Macaulay} 
\Longrightarrow \text{$\oR$ is Cohen-Macaulay}.$$
We remark that the Cohen-Macaulay property of $\oR$ only depends on the topology of the underlying space of $\cell$ 
(and $\chara(\kk)$).

\medskip

\noindent{\bf Convention.} In this paper, we use the following notation: 
For a commutative ring $A$, $\Mod A$ denotes the category of $A$-modules. 

For cochain complexes $\cpx M$ and $\cpx N$, 
$\cpx M \cong \cpx N$ means that two complexes are isomorphic in the derived category, and 
$\cpx M =\cpx N$ means that these are isomorphic as (explicit) complexes.  
If $\cpx M \cong \cpx N$, we say these two complexes are {\it quasi-isomorphic} 
(especially when  a direct quasi-isomorphism $\cpx M \to \cpx N$ or $\cpx N \to \cpx M$ exists).

While the word ``dualizing complex" sometimes means its isomorphism class in the derived category,  
we use the convention that the dualizing complex 
$\cpx D_A$ of a noetherian ring $A$ is the one of the form 
$$0 \too D_A^{-\dim A} \too \cdots \too D_A^{-1} \too D_A^0 \too 0$$
with 
\begin{equation}\label{normal form}
D_A^{-i} = \bigoplus_{\substack{\p \in \operatorname{Spec} A \\ \dim A/\p =i}} E(A/\p), 
\end{equation}
where $E(A/\p)$ is the injective envelope of $A/\p$. 

In this paper, we freely use the $\ZZ^d$-graded versions of Matlis duality and local duality. 
These are implicit in Chapters 5 and 6 of \cite{BH}, but the detailed argument is found in \cite{GW}. 

\section{Dualizing complexes of seminormal affine semigroup rings}
For the convention and notation about an affine semigroup $\M \subset \ZZ^d$ and the cone 
$\cC(\M) \subset \RR^d$ spanned by $\M$, see the end of the  previous section. 

Let 
$$\kk[\M]:= \bigoplus_{a \in \M} \kk \, x^a \subset \kk[x_1^{\pm 1}, \ldots, x_d^{\pm 1}]$$
be the semigroup ring of $\M$ over a field $\kk$. Here, for $a =(a_1, \ldots, a_d) \in \ZZ^d$, 
$x^a$ denotes the monomial $\prod_{i=1}^d x_i^{a_i}.$  
Clearly, $R:=\kk[\M]$ is a $\ZZ^d$-graded ring, and $\*Mod R$ denotes 
the category of $\ZZ^d$-graded $R$-modules. 

For $M =\bigoplus_{a \in \ZZ^d} M_a \in \*Mod R$, set 
$$M_{\cC(\M)}:= \bigoplus_{a \in \ZZ^d \cap \cC(\M)} M_a.$$
It is clear that $M_{\cC(\M)}$ is a $\ZZ^d$-graded $R$-submodule of $M$, 
and we call it the {\it $\cC(\M)$-graded part} of $M$. 
Similarly, for a cochain complex $\cpx M$ in $\*Mod R$, we can defined a subcomplex $(\cpx M)_{\cC(\M)}$. 

For a face $F$ of $\cC(\M)$, 
$$\M_F := \M \cap F$$
is a submonoid of $\M$. Consider the monomial  ideal (i.e., $\ZZ^d$-graded ideal)
$$\p_F := ( x^a \mid a \in \M \setminus \M_F)$$
of $R$. Since $R/\p_F$ is isomorphic to the affine semigroup ring $\kk[\M_F]$ of $\M_F$, $\p_F$ is a prime ideal. 
Conversely, any monomial prime ideal coincide with $\p_F$ for some $F$. 
We regard $\kk[\M_F]$ as an $R$-module through $R/\p_F \cong \kk[\M_F]$. 

For a face $F$ of $\cC(\M)$, $T_F := \set{x^a}{a \in \M_F} \subset R$ is a multiplicatively 
closed subset. So we have the localization $T_F^{-1} R$ of $R$ by $T_F$.
The {\it C\v ech complex} $\cpx \Cec_R$ is defined as follows: 
$$
\cpx \Cec_R: 0 \longto \Cec^0_R \stackrel{\partial}{\longto} \Cec^1_R \stackrel{\partial}{\longto} 
\cdots \stackrel{\partial}{\longto} \Cec^d_R \longto 0, 
$$ 
where 
$$
\Cec^i_R := \bigoplus_{\substack{ F: \, \text{a face of} \, \cC(\M) \\ \dim F =i}} T_F^{-1}R. 
$$
The differential map $\partial:\Cec^i_R \to \Cec^{i+1}_R$ is given by
$$
\partial(x) = \sum_{\substack{G \supset F \\ \dim G = i+1}} \e(G,F) \cdot \i_{G, F}(x), 
$$
where  $\i_{G, F}$ is the natural injection $T^{-1}_F R \too T^{-1}_G R$
for $G \supset F$, and $\e(G,F)$ is the incidence function of the regular CW complex given by a 
cross section of $\cC(\M)$. The precise information on  $\e(G,F)$ is found in \cite[\S 6.2]{BH}, and 
we will use this function later in a more general situation. 
Here we just remark that $\e(G,F)=\pm 1$ for all $F,G$ with $G \supset F$ and $\dim G = \dim F+1$, 
and this signature makes $\cpx \Cec_R$  a cochain complex.    

As shown in \cite[Theorem~6.2.5]{BH}, the local cohomology module $H_\m^i(R)$ at the graded maximal ideal $\m:=(x^a \mid 0 \ne a \in \M)$ 
is isomorphic to $H^i(\cpx \Cec_R)$ in $\*Mod R$.   Moreover, $\cpx \Cec_R$ is a ($\ZZ^d$-graded) flat resolution of $\bR \Gamma_\m R$.   

\medskip
 
The $\ZZ^d$-graded Matlis dual $(T_F^{-1}R)^\vee$ of $T_F^{-1}R$ is of the form 
$$(T_F^{-1}R)^\vee =\bigoplus_{a \in \M_F -\M} \kk \, e_a,$$
where $e_a$ is a basis element with the degree $a$, and 
$$\M_F -\M =\{ \, b-c \mid b \in \M_F \ \text{and} \ c \in \M \, \}.$$  
The multiplication map $x^a \times (-) \,: [(T_F^{-1}R)^\vee]_b \too [(T_F^{-1}R)^\vee]_{a+b}$ is surjective for all 
$a \in \M$ and $b \in \ZZ^d$. 
By the flatness of $T_F^{-1}R$ and \cite[Lemma~11.16]{MS}, $(T_F^{-1}R)^\vee$ is an injective object in $\*Mod R$,  
moreover, it is the injective envelope $\EE (\kk[\M_F])$ of 
$\kk[\M_F]=R/\p_F$ in $\*Mod R$.

The $\ZZ^d$-graded Matlis dual $\cpx J_R:= (\cpx \Cec_R)^\vee$ of $\cpx \Cec_R$ is of the form
$$\cpx J_R : 0 \too J^{-d}_R\too J^{-d+1}_R \too \cdots \too J^0_R \too 0,$$
$$J^{-i}_R=\bigoplus_{\substack{ F: \, \text{a face of} \, \cC(\M) \\ \dim F =i}} 
\EE (\kk[\M_F]).$$
The differential map $\partial: J^{-i}_R \to J^{-i+1}_R$ is given by
$$
\partial(x) = \sum_{\substack{G \subset F \\ \dim G = i-1}} \e(F,G) \cdot p_{G, F}(x) 
$$
for $x \in \EE[\M_F] \subset J_R^{-i}$. Here $p_{G,F}: \EE(\kk[\M_F]) \to \EE(\kk[\M_G])$ 
is the Matlis dual of $\i_{F,G}$, and also induced by the map $\kk[\M_F] \to \EE(\kk[\M_G])$
which is the composition of the natural surjection $\kk[\M_F] \twoheadrightarrow \kk[\M_G]$ and  
the inclusion $\kk[\M_G] \hookrightarrow \EE(\kk[\M_G])$. 

As is well-known,  $\cpx J_R$ is quasi-isomorphic to the dualizing complex $\cpx D_R$ of $R$, 
moreover, it is nothing other than the dualizing complex of $R$ in the $\ZZ^d$-graded context 
(see \cite[Proposition~4.4]{SS} , also \cite{I}).

For a face $F$ of the polyhedral cone $\cC(\M)$, we regard  
$$\kk[\ZZ\M_F \cap F]:=\bigoplus_{b \in \ZZ \M_F \cap F} \kk \, x^b$$
as a $\ZZ^d$-graded $R$-module by 
$$
x^a x^b=\begin{cases}
x^{a+b} & \text{if $a \in \M_F$,}\\
0 & \text{otherwise,}
\end{cases}
$$
for $x^a \in R=\kk[\M]$ and $x^b \in \kk[\ZZ\M_F \cap F]$. 
Note that $\kk[\ZZ\M_F \cap F]$ is the normalization of $\kk[\M_F]$, and 
$$\EE(\kk[\M_F])_{\cC(\M)} \cong \kk[\ZZ \M_F \cap F]$$ 
as $R$-modules. Let $F,G$ be faces of  $\cC(\M)$ with $F \supset G$. 
As $R$-modules,  $\kk[\ZZ \M_G \cap G]$ is a quotient module of  $\kk[\ZZ \M_F \cap F]$ 
(note that  $\ZZ \M_G$ is a sublattice of $\ZZ \M_F \cap G$). 
Hence there is the $\ZZ^d$-graded surjection $\pi_{G,F} : \kk[\ZZ \M_F \cap F] \too \kk[\ZZ \M_G \cap G]$, which is the $\cC(\M)$-graded part of $p_{G,F}$ 
(if $\dim G =\dim F-1$).

Hence the $\cC(\M)$-graded part $$\upl \cpx I_R:= (\cpx J_R)_{\cC(\M)}$$  
of the complex $\cpx J_R$ is of the form   
$$\upl \cpx I_R : 0 \too \upl I^{-d}_R\too \upl I^{-d+1}_R \too \cdots \too \upl I^0_R \too 0,$$
$$\upl I^{-i}_R=\bigoplus_{\substack{ F: \, \text{a face of} \, \cC(\M) \\ \dim F =i}} 
\kk[\ZZ\M_F \cap F].$$
The differential map $\partial: \upl I^{-i}_R \to \upl I^{-i+1}_R$ is given by
$$
\partial(x) = \sum_{\substack{G \subset F \\ \dim G = i-1}} \e(F,G) \cdot \pi_{G, F}(x), 
$$
for $x \in \kk[\M_F] \subset \upl I_R^{-i}$. 

\medskip

As is well-known, $R=\kk[\M]$ is normal if and only if $\M=\bM:=\ZZ\M \cap \cC(\M)$. 
We can characterize the seminormality of $R$ in a similar way. 
For a face $F$ of $\cC(\M)$, $\int (F)$ denotes its relative interior.
Clearly,  $$\cC(\M)=\bigsqcup _{F :\, \text{a face of} \, \cC(\M)} \int (F).$$ 
Set 
\begin{equation}\label{monoid seminormalization}
\upl \M := \bigsqcup_{F :\, \text{a face of} \, \cC(\M)} \ZZ \M_F \cap \int (F).
\end{equation}
Then $\upl \M$ is an affine semigroup with $\M \subseteq \upl \M \subseteq \bM$ and $\upl (\upl \M) =\upl \M$.

\begin{thm}[L. Reid and L.G. Roberts \cite{RR}, Bruns, Li and R\"omer \cite{BLR}]\label{RRBLR}
For an affine semigroup ring $R=\kk[\M]$, the following are equivalent.      
\begin{itemize}
\item[(i)]$R$ is seminormal. 
\item[(ii)] $\M =\upl \M$.
\item[(iii)]  $H_\m^i(R)_a \ne 0$ for $a \in \ZZ^d$ implies $-a \in \cC(\M)$.
\end{itemize}
Hence $\upl R :=\kk[\upl \M]$ is the seminormalization of $R=\kk[\M]$. 
\end{thm}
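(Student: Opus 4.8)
This theorem combines two results of rather different character: the equivalence (i) $\Leftrightarrow$ (ii) is the monoid-theoretic criterion of Reid and Roberts, while (ii) $\Leftrightarrow$ (iii) is the homological criterion of Bruns, Li and R\"omer; the closing sentence is then a formal consequence. My plan is to record the elementary properties of $\upl\M$ first, then prove the two equivalences, and finally identify $\kk[\upl\M]$ with the seminormalization of $R$.

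First I would verify the assertions about $\upl\M$ stated just before the theorem. Using $\int(F)+\int(G)\subseteq\int(H)$ and $\ZZ\M_F+\ZZ\M_G\subseteq\ZZ\M_H$, where $H$ is the smallest face of $\cC(\M)$ containing both $F$ and $G$, one checks that the set $\upl\M$ of \eqref{monoid seminormalization} is closed under addition, hence an affine semigroup, and visibly $\M\subseteq\upl\M\subseteq\bM$. The useful reformulation is
$$\upl\M \;=\; \{\, b\in\ZZ\M \mid nb\in\M \text{ for all } n\gg0 \,\}.$$
Indeed, if $b\in\ZZ\M_F\cap\int(F)$ then $nb\in\ZZ\M_F\cap\int(F)$ for every $n\ge1$, and since $\M_F=\M\cap F$ spans the cone $F$, the classical fact that a lattice point in the relative interior of the cone of a positive affine semigroup has all large multiples in the semigroup gives $nb\in\M_F\subseteq\M$ for $n\gg0$; conversely, if $nb\in\M$ for $n\gg0$ and $F$ is the unique face with $b\in\int(F)$, then $nb\in\M\cap\int(F)\subseteq\M_F$ for large $n$, so $b=(n+1)b-nb\in\ZZ\M_F$ and hence $b\in\ZZ\M_F\cap\int(F)\subseteq\upl\M$. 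From this description a descending induction on $n$ shows that $\upl\M$ is the smallest submonoid $\M'$ with $\M\subseteq\M'\subseteq\bM$ having the property that $2b,3b\in\M'$ (with $b\in\ZZ\M'$) forces $b\in\M'$; comparing this with the combinatorial definition of $\upl{(-)}$ also gives $\upl(\upl\M)=\upl\M$.

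For (i) $\Leftrightarrow$ (ii): if $\M\ne\upl\M$, the previous paragraph supplies $b\in\ZZ\M$ with $2b,3b\in\M$ but $b\notin\M$; writing $b=b'-b''$ with $b',b''\in\M$ we get $x^b=x^{b'}/x^{b''}\in Q(R)$ with $(x^b)^2,(x^b)^3\in R$ and $x^b\notin R$, so $R$ is not seminormal. Conversely, assume $\M=\upl\M$ and take $f\in Q(R)$ with $f^2,f^3\in R$. Then $f$ is integral over $R$ (it satisfies $T^2-f^2=0$), hence lies in the normalization $\kk[\bM]$, so $f=\sum_{b\in B}c_b\,x^b$ with $B=\supp(f)\subseteq\bM$ finite and all $c_b\ne0$. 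The task is to deduce $B\subseteq\M$, i.e.\ $f\in R$, from the fact that $\supp(f^2)$ and $\supp(f^3)$ lie in $\M$ together with the monomial criterion ``$2b,3b\in\M\Rightarrow b\in\M$''. Choose a group homomorphism $\lambda\colon\ZZ^d\to\ZZ$ that is strictly positive on $\cC(\M)\setminus\{0\}$ and, after a small perturbation inside the open cone of such functionals, injective on the finite set $B$; if $b_0\in B$ minimizes $\lambda$, then $2b_0$ (resp.\ $3b_0$) has a unique expression as a sum of two (resp.\ three) elements of $B$, namely $b_0+b_0$ (resp.\ $b_0+b_0+b_0$), so the degree-$2b_0$ component of $f^2$ is $c_{b_0}^2\ne0$ and the degree-$3b_0$ component of $f^3$ is $c_{b_0}^3\ne0$; hence $2b_0,3b_0\in\M$ and $b_0\in\upl\M=\M$. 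One then iterates, removing $c_{b_0}x^{b_0}$ and treating the next element of $B$; arranging the bookkeeping so that the inductive hypothesis survives each step is the delicate point, and is the substance of the Reid--Roberts argument. This passage from the monomial test to arbitrary elements of $Q(R)$ is the step I expect to be the main obstacle.

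It remains to prove (ii) $\Leftrightarrow$ (iii) and the last sentence. For the equivalence I would compute the $\ZZ^d$-graded components of $H^i_\m(R)\cong H^i(\cpx\Cec_R)$ directly. For fixed $a\in\ZZ^d$, the $j$-th term of $(\cpx\Cec_R)_a$ has dimension equal to the number of faces $F$ with $\dim F=j$ and $a\in\M-\M_F$; since $\M_F\subseteq\M_G$ for $F\subseteq G$, the set $\{F:a\in\M-\M_F\}$ is an up-set in the face poset, so $(\cpx\Cec_R)_a$ is (a shift of) the cellular cochain complex of the corresponding open subcomplex of the regular CW complex obtained as a cross-section of $\cC(\M)$. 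Condition (iii) amounts to the exactness of this complex whenever $-a\notin\cC(\M)$, and a combinatorial case analysis of these subcomplexes (this is the heart of the Bruns--Li--R\"omer argument) identifies the obstruction to such exactness with the failure of $\ZZ\M_F\cap\int(F)\subseteq\M$ for some face $F$; since $\M=\bigsqcup_F(\M_F\cap\int(F))$ and $\upl\M=\bigsqcup_F(\ZZ\M_F\cap\int(F))$, this obstruction vanishes exactly when $\M=\upl\M$. Finally, $\kk[\upl\M]$ is seminormal by (i) $\Leftrightarrow$ (ii) applied to $\upl\M$ (using $\upl(\upl\M)=\upl\M$), it lies between $R$ and its normalization $\kk[\bM]$, and $R\hookrightarrow\kk[\upl\M]$ is subintegral: it is a finite composition of elementary subintegral extensions $A\hookrightarrow A[x^b]$ with $(x^b)^2,(x^b)^3\in A$, obtained by adjoining at each stage an element $b$ with $2b,3b$ in the current subring (possible as long as that subring is a proper submonoid algebra of $\kk[\upl\M]$, by the minimality in the second paragraph), the process terminating because $\kk[\upl\M]$ is a Noetherian $\kk[\M]$-module. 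Since the seminormalization of $R$ is the largest subextension of $R\subseteq\kk[\bM]$ that is subintegral over $R$, and a seminormal ring admits no proper subintegral overring (see \cite{Sw}), these two facts force $\kk[\upl\M]=\upl R$, which is the final assertion.
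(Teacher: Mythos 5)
The paper does not actually prove Theorem~\ref{RRBLR}: it is imported from the literature, with (i)$\Leftrightarrow$(ii) cited as \cite[Theorem~4.3]{RR} and (i)$\Leftrightarrow$(iii) as \cite[Theorem~4.7]{BLR}, so there is no internal argument to compare yours against. On its own terms, the elementary parts of your proposal are sound: the description $\upl\M=\set{b\in\ZZ\M}{nb\in\M \text{ for } n\gg 0}$, the implication ``$\M\ne\upl\M$ implies $R$ not seminormal'' (choosing $m$ maximal with $mb\notin\M$ so that $2mb,3mb\in\M$), and the final identification of $\kk[\upl\M]$ with the seminormalization via Swan's characterization of subintegral extensions are all correct in outline. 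But the proposal has genuine gaps at exactly the two points you flag, and they are not mere bookkeeping.

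First, in (ii)$\Rightarrow$(i) the leading-term step is fine ($2b_0,3b_0\in\supp(f^2)\cup\supp(f^3)\subseteq\M$, hence $b_0\in\upl\M=\M$), but the iteration ``remove $c_{b_0}x^{b_0}$ and continue'' fails as stated: for $g:=f-c_{b_0}x^{b_0}$ the cross term $x^{b_0}f$ in $g^2$ need not lie in $R$ while $f\notin R$, so the hypothesis $g^2,g^3\in R$ is lost after one step. A real additional idea is needed here --- for instance, showing that the seminormalization $\upl R\subseteq\kk[\bM]$ is itself a $\ZZ^d$-graded (hence monomial) subring, which reduces the whole problem to the monomial test; as written, the argument stalls after the first monomial. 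Second, for (ii)$\Leftrightarrow$(iii) the reduction to the exactness of the degree-$a$ strand of $\cpx \Cec_R$ (the cochain complex of an upper-set of faces) is correct, but the ``combinatorial case analysis'' converting non-exactness for some $a$ with $-a\notin\cC(\M)$ into the failure of $\ZZ\M_F\cap\int(F)\subseteq\M$ is the entire content of \cite[Theorem~4.7]{BLR} and is asserted rather than proved. As it stands, the proposal is an honest roadmap to the cited proofs, not a self-contained proof.
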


In the above theorem, the equivalence between (i) and (ii) (resp. (i) and (iii)) is 
\cite[Theorem~4.3]{RR} (resp. \cite[Theorem~4.7]{BLR}).

\begin{exmp}\label{seminormal, but nonnormal}
For the additive submonoid 
$$\M=\{\, (m,n) \mid m \ge 0, n \ge 1\, \} \cup 
\{\, (2m,0) \mid m \ge 0 \, \} $$
 of $\NN^2$, $\kk[\M]$ is seminormal, but not normal. 
\begin{center}
\begingroup  
\setlength\unitlength{8mm} 
\begin{picture}(7,6)(0,-1)  
\blackcirclefill<.2>(4,3)(0,1) 
\put(0,0){\circle*{.2}}  
\put(1,0){\circle{.2}}  
\put(2,0){\circle*{.2}} 
\put(3,0){\circle{.2}}
\put(4,0){\circle*{.2}} 
\put(0,0){\vector(1,0){5}}  %
\put(0,0){\vector(0,1){4}}  %
\end{picture}
\endgroup
\end{center}
\end{exmp}

\begin{prop}\label{+I_R}
If  $R=\kk[\M]$ is seminormal,  then $\upl \cpx I_R$ is isomorphic to the $\ZZ^d$-graded dualizing 
complex $\cpx J_R$  in the derived category $\Db(\*Mod R)$,  
hence  $\upl \cpx I_R \cong \cpx D_R$ in  $\Db(\Mod R)$. 
Conversely, if $\upl \cpx I_R \cong \cpx D_R$ in  $\Db(\Mod R)$ then $R$ is seminormal. 
\end{prop}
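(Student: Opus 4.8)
The plan is to handle the two implications separately. For the forward direction (``$R$ seminormal $\Rightarrow \upl\cpx I_R\cong\cpx J_R$'') I would observe that $(-)_{\cC(\M)}$ is an exact endofunctor of $\*Mod R$, so that $H^i(\upl\cpx I_R)=(H^i(\cpx J_R))_{\cC(\M)}$ and the inclusion $\iota\colon \upl\cpx I_R=(\cpx J_R)_{\cC(\M)}\hookrightarrow \cpx J_R$ induces on each $H^i$ exactly the inclusion of the $\cC(\M)$-graded submodule. Hence $\iota$ is a quasi-isomorphism if and only if every $H^i(\cpx J_R)$ is supported in degrees belonging to $\cC(\M)$. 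Since $\cpx\Cec_R$ is a flat resolution of $\bR\Gamma_\m R$ and the graded Matlis dual is exact on $\*Mod R$, we get $H^i(\cpx J_R)=H^i(\cpx\Cec_R)^\vee=H^i_\m(R)^\vee$, whose nonzero degrees are the negatives of those of $H^i_\m(R)$; so the displayed support condition is precisely condition~(iii) of Theorem~\ref{RRBLR}. Thus seminormality of $R$ makes $\iota$ a quasi-isomorphism, giving $\upl\cpx I_R\cong\cpx J_R$ in $\Db(\*Mod R)$, and then $\upl\cpx I_R\cong\cpx D_R$ in $\Db(\Mod R)$ because $\cpx J_R\cong\cpx D_R$ there.

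This same ``if and only if'' shows that $\iota$ fails to be a quasi-isomorphism once $R$ is not seminormal, but it does not preclude an abstract isomorphism $\upl\cpx I_R\cong\cpx D_R$ realized by some other morphism, so the converse needs a new ingredient. Here I would pass to the seminormalization $\upl R=\kk[\upl\M]$ (see~\eqref{monoid seminormalization} and Theorem~\ref{RRBLR}), using the observation that $\upl\cpx I_R$ is left \emph{unchanged} when $\M$ is replaced by $\upl\M$: one has $\cC(\upl\M)=\cC(\M)$ and, for every face $F$, $\ZZ(\upl\M)_F=\ZZ\M_F$ (since each $\ZZ\M_G$ with $G\subseteq F$ already lies in $\ZZ\M_F$), so the terms $\kk[\ZZ(\upl\M)_F\cap F]=\kk[\ZZ\M_F\cap F]$, the differentials, and the incidence signs $\e(F,G)$ all coincide, and moreover for $a\in\M$ one has $a\in(\upl\M)_F$ iff $a\in\M_F$, so the $R$-module structures match too. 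Hence $\upl\cpx I_R=\upl\cpx I_{\upl R}$ as complexes of $R$-modules. Applying the (already established) forward implication to $\upl R$ gives $\upl\cpx I_{\upl R}\cong\cpx D_{\upl R}$ in $\Db(\Mod\upl R)$, hence in $\Db(\Mod R)$; combined with the hypothesis $\upl\cpx I_R\cong\cpx D_R$ this yields $\cpx D_{\upl R}\cong\cpx D_R$ in $\Db(\Mod R)$.

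From here I would finish by duality. Since $\upl R$ is module-finite over $R$ with $\dim\upl R=\dim R=d$, we have $\cpx D_{\upl R}\cong\bR\Hom_R(\upl R,\cpx D_R)$ in $\Db(\Mod R)$; applying $\bR\Hom_R(-,\cpx D_R)$ to $\cpx D_{\upl R}\cong\cpx D_R$ and using biduality together with $\bR\Hom_R(\cpx D_R,\cpx D_R)\cong R$ gives $\upl R\cong R$ in $\Db(\Mod R)$, hence $\upl R\cong R$ as $R$-modules. To upgrade this to an equality, pick an $R$-linear isomorphism $\phi\colon R\xrightarrow{\sim}\upl R$ and set $g:=\phi(1)$, so $\upl R=Rg$; from $R\subseteq Rg$ inside the fraction field one gets $1=r_0g$ with $r_0\in R$, so $g$ is a unit of $\upl R$, and since $\upl R$ is a ring $g^2\in Rg$ forces $g=r_1\in R$ (cancel $g\ne 0$ in the domain $R$). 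Therefore $\upl R=Rg\subseteq R$, i.e. $\M=\upl\M$, and $R$ is seminormal by Theorem~\ref{RRBLR}.

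I expect the crux to be this last leg of the converse. The identification $\cpx D_{\upl R}\cong\bR\Hom_R(\upl R,\cpx D_R)$ must be made carefully so that normalization conventions agree and no degree shift or twist by an invertible module intervenes, and the passage from ``$\upl R\cong R$ in the derived category'' to ``$\upl R=R$'' genuinely exploits that $\upl R$ is an $R$-subalgebra of the domain $Q(R)$, not merely an abstract $R$-module. The forward direction, by contrast, is essentially bookkeeping of $\ZZ^d$-degrees once Theorem~\ref{RRBLR} is in hand, and the identity $\upl\cpx I_R=\upl\cpx I_{\upl R}$ is a routine combinatorial check.
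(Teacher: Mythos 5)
Your proposal is correct and follows essentially the same route as the paper: the forward direction via local duality plus condition (iii) of Theorem~\ref{RRBLR} showing the inclusion $(\cpx J_R)_{\cC(\M)}\hookrightarrow\cpx J_R$ is a quasi-isomorphism, and the converse via the identity $\upl\cpx I_R=\upl\cpx I_{\upl R}$ followed by applying $\Hom^\bullet_R(-,\cpx D_R)$ and biduality to get $R\cong\upl R$. Your explicit argument upgrading $R\cong\upl R$ to $R=\upl R$ (using that $\upl R$ is a cyclic $R$-submodule of the domain $Q(R)$) fills in a step the paper leaves as ``it means that $R=\upl R$,'' which is a welcome addition rather than a deviation.
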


\begin{proof}
We start from the proof of the first assertion. 
Since $H_\m^i(R)^\vee \cong H^{-i}(\cpx J_R)$ by the local duality theorem, 
$H^i(\cpx J_R)_a \ne 0$ implies $a \in \cC(\M)$ by Theorem~\ref{RRBLR}. 
Hence the $\cC(\M)$-graded part $\upl \cpx I_R$ of $\cpx J_R$ 
is quasi-isomorphic to $\cpx J_R$ itself.   

Next, we show the last assertion. 
For the seminormalization $\upl R$ of $R$, the explicit computation gives the isomorphism  
$\upl \cpx I_R = \upl \cpx I_{\upl R}$ as cochain complexes of $R$-modules. 
We just shown that $\upl \cpx I_{\upl R} \cong \cpx D_{\upl R}$ in $\Db(\Mod \upl R)$.  
Hence $\upl \cpx I_{\upl R} \cong \cpx D_{\upl R}$  also in $\Db(\Mod R)$. Since $\upl R$ is a finitely generated $R$-module, 
$\Hom^\bullet_R( \upl \cpx I_{\upl R}, \cpx D_R) \cong \upl R$ in $\Db(\Mod R)$. 
Clearly, we also have $\Hom^\bullet_R( \upl \cpx I_R, \cpx D_R) \cong R$. 
So taking the functor $\Hom^\bullet_R(-, \cpx D_R)$ to $\upl \cpx I_R = \upl \cpx I_{\upl R}$, 
we have $R \cong \upl R$ as $R$-modules. 
It means that $R=\upl R$, and hence $R$ is seminormal. 
\end{proof}

\section{The normality and the canonical module of an affine semigroup ring}
Consider the following subcomplex  of $\upl \cpx I_R$: 
$$\cpx  I_R : 0 \too I^{-d}_R\too I^{-d+1}_R \too \cdots \too I^0_R \too 0,$$
$$ I^{-i}_R=\bigoplus_{\substack{ F: \, \text{a face of} \, \cC(\M) \\ \dim F =i}} 
\kk[\M_F].$$

If $R$ is normal, then $\kk[\M_F]$ is normal for all $F$ and $\cpx I_R = \upl \cpx I_R$. Hence, in this case, $\cpx I_R$ is quasi-isomorphic to 
the dualizing complex $\cpx D_R$. This is a well-known result 
essentially appears in \cite[\S6.3]{BH}. 
The next result states that the converse also holds.

\begin{thm}\label{normal}
For an affine semigroup ring $R=\kk[\M]$, the following are equivalent.
\begin{itemize}
\item[(i)] $R$ is normal. 
\item[(ii)] The complex $\cpx I_R$ is quasi-isomorphic to the dualizing complex $\cpx D_R$. 
\item[(iii)] $R$ is Cohen-Macaulay and the canonical module $\omega_R$ is isomorphic to 
the ideal $W_R := ( \, \, x^a \mid a \in \M \cap \int (\cC(\M)) \, )$ of $R$ 
in $\Mod R$. 
\end{itemize}
\end{thm}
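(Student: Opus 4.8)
The plan is to prove the cycle of implications $\text{(i)}\Rightarrow\text{(ii)}\Rightarrow\text{(iii)}\Rightarrow\text{(i)}$. The first implication is already discussed in the excerpt: when $R$ is normal, each $\kk[\M_F]$ equals $\kk[\ZZ\M_F\cap F]$, so $\cpx I_R=\upl\cpx I_R$, and by Proposition~\ref{+I_R} (normal rings being seminormal) this is quasi-isomorphic to $\cpx D_R$. For $\text{(ii)}\Rightarrow\text{(iii)}$, suppose $\cpx I_R\cong\cpx D_R$ in $\Db(\Mod R)$. Since $\cpx I_R$ is concentrated in cohomological degrees $-d,\ldots,0$ and its top term $I^{-d}_R=\kk[\M_{\{0\}}]=\kk$ sits in degree $-d$, while $H^{-i}(\cpx D_R)^\vee\cong H^i_\m(R)$ by local duality, the quasi-isomorphism forces $H^i_\m(R)=0$ for $i<d$, i.e.\ $R$ is Cohen-Macaulay, and $\omega_R\cong H^0(\cpx D_R^{})\ldots$ — more precisely $\omega_R\cong H^{-d}(\,\cdot\,)$ in the normalization $\cpx D_R[-d]$; concretely, $\omega_R$ is the kernel of $I^{-d}_R\to I^{-d+1}_R$ shifted, but it is cleaner to say $\omega_R\cong \opn{Ext}^{-?}$—instead I will argue: the Cohen-Macaulayness makes $\cpx D_R$ quasi-isomorphic to $\omega_R[d]$, so $\cpx I_R\cong\omega_R[d]$, and since $\cpx I_R$ is a complex of $\ZZ^d$-graded modules (being a subcomplex of $\cpx J_R$), the canonical module inherits a $\ZZ^d$-grading; computing the bottom cohomology $H^{-d}$ of $\cpx I_R$ explicitly gives $\omega_R\cong\bigcap_F\ker(\text{stuff})$, which I claim equals the $R$-submodule of $\kk[\M]$ spanned by $x^a$ with $a\in\M\cap\int\cC(\M)$, i.e.\ the ideal $W_R$.

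The key computation for $\text{(ii)}\Rightarrow\text{(iii)}$ is identifying $H^{-d}(\cpx I_R)$. Here $I^{-d}_R=\kk$ is in degree $0\in\ZZ^d$ and $I^{-d+1}_R=\bigoplus_{\dim F=1}\kk[\M_F]$; but the differential out of degree $-d$ goes the wrong way for computing $H^{-d}$, so actually $H^{-d}(\cpx I_R)=\ker(I^{-d}_R\to I^{-d+1}_R)$ which is either $0$ or $\kk$. That cannot be $\omega_R[d]$ unless $d=0$, so I have the grading/direction of the complex reversed: $\cpx I_R$ has differentials raising cohomological degree from $-d$ up to $0$, and $\omega_R\cong H^0$ of $\cpx D_R$ only when $\dim R=0$. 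The correct statement is that when $R$ is CM of dimension $d$, $\cpx D_R\cong\omega_R[d]$ means $\omega_R=H^{-d}(\cpx D_R)$ after the shift—no: $\cpx D_R$ lives in degrees $-d,\ldots,0$ with $H^{-i}(\cpx D_R)=\opn{Ext}^{i-?}$. Let me restate cleanly: by local duality $H^i_\m(R)^\vee\cong H^{-i}(\cpx D_R)$; CM means $H^i_\m(R)=0$ for $i\ne d$, hence $H^{-i}(\cpx D_R)=0$ for $i\ne d$ and $H^{-d}(\cpx D_R)\cong H^d_\m(R)^\vee\cong\omega_R$. So under (ii), $R$ is CM (as $\cpx I_R$ has no cohomology below degree $-d$ for trivial reasons once it is exact except at $-d$... actually I must derive CM: if $\cpx I_R\cong\cpx D_R$ then $H^{-i}(\cpx I_R)\cong H^i_\m(R)^\vee$; but $\cpx I_R$ being a specific complex, $H^0(\cpx I_R)=\opn{coker}(I^{-1}_R\to I^0_R)$ where $I^0_R=\bigoplus_{\dim F=d}\kk[\M_F]=\kk[\M]$ since the only $d$-dimensional face is $\cC(\M)$ itself). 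Hmm — so $H^0(\cpx I_R)$ is a quotient of $\kk[\M]=R$, whereas $H^0(\cpx D_R)^\vee=H^0_\m(R)=0$; consistency is automatic. The real content: I will show directly that $\cpx I_R\cong\cpx D_R$ forces, via $\opn{Ext}$-vanishing and the shape of $\cpx I_R$, both CM-ness and $\omega_R\cong W_R$. The cleanest route: apply $\inHom_R(-,\cpx D_R)$ to $\cpx I_R\cong\cpx D_R$ to get $\inHom_R(\cpx I_R,\cpx D_R)\cong R$; then compute $\inHom_R(\cpx I_R,\cpx D_R)$ term-by-term using that each $\kk[\M_F]$ is CM with known canonical module (the ideal of interior monomials of $\M_F$), obtaining a complex whose total object is $R$, and run the normality criterion.

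Honestly, the approach I actually expect to work is: reduce $\text{(ii)}\Rightarrow\text{(i)}$ via the seminormalization trick of Proposition~\ref{+I_R} combined with $\text{(ii)}\Rightarrow\text{(iii)}$ for the seminormal ring $\upl R$, and then prove for $R$ seminormal that $\omega_{\upl R}\cong W_{\upl R}$ (as an ideal of interior elements of $\upl\M$) implies $\upl\M$ is normal. For the implication $\text{(iii)}\Rightarrow\text{(i)}$: assume $R$ is CM and $\omega_R\cong W_R$ as $R$-modules (not necessarily gradedly). First, $W_R$ has a natural $\ZZ^d$-grading, and $\omega_R$ also has one (canonical module of a graded CM ring is graded, unique up to shift); a standard argument (e.g.\ comparing the sets of associated primes, or using that both are ideals of $R$ with the same $\kk$-dimension in each degree after a shift) promotes the ungraded isomorphism to a graded one up to twist. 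Then $\omega_R\cong W_R(a_0)$ for some $a_0\in\ZZ^d$. But $\omega_R$ is also computed as $H^{-d}(\cpx J_R)$, the $\ZZ^d$-graded canonical module, which by the description of $\cpx J_R$ is a submodule of $\EE(\kk[\M_{\{0\}}])=\EE(\kk)$; matching Hilbert series in each $\ZZ^d$-degree against $W_R$—whose Hilbert function is the indicator of $\M\cap\int\cC(\M)$—forces $\M\cap\int\cC(\M)=\bM\cap\int\cC(\M)$ (after the shift $a_0$ must be $0$), and then the seminormality-to-normality bootstrap (if the interior lattice points all lie in $\M$, and similarly for every face by the graded-local-duality restriction to faces, one climbs down dimension to conclude $\M=\bM$) gives normality. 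The step I expect to be the main obstacle is exactly this last bootstrap: passing from "$\omega_R$ has the shape of the interior ideal" to "$\M=\bM$", since a priori equality of the interior lattice points of $\M$ and $\bM$ only controls $\int\cC(\M)$, and one must use the analogous statement on every face $F$—which requires knowing that CM-ness plus the $\omega_R\cong W_R$ condition is inherited by (or at least constrains) the localizations $R_{\p_F}$, equivalently the rings $\kk[\M_F]$ localized appropriately. Handling the ungraded-to-graded promotion of the isomorphism $\omega_R\cong W_R$ cleanly is the other delicate point, and I would isolate it as a lemma about ideals of $\ZZ^d$-graded domains that are isomorphic as modules.
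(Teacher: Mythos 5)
Your cycle (i)$\Rightarrow$(ii)$\Rightarrow$(iii)$\Rightarrow$(i) is the same as the paper's, and (i)$\Rightarrow$(ii) is fine, but the two points you yourself flag as ``delicate'' are exactly where the content of the theorem lies, and your proposal does not supply them. The claim that an ungraded isomorphism $\omega_R\cong W_R$ can be promoted ``by a standard argument'' to a $\ZZ^d$-graded isomorphism up to twist is not standard and is essentially the whole difficulty; the paper never attempts it. Instead it applies rigidity to $\Hom_R(W_R,\omega_R)$: this graded module is ungraded-isomorphic to $R$, and since the unit group of $R$ is $\kk\setminus\{0\}$ the only gradings on the module $R$ are the shifts, whence $\Hom_R(W_R,\omega_R)\cong R(-a)$ for some \emph{unknown} $a\in\ZZ^d$. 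Pinning down $a=0$ --- the analogue of your unexplained ``the shift $a_0$ must be $0$'' --- is the heart of the proof and occupies most of it: one shows $\Ext^1_R(R/W_R,\omega_R)\cong(R/W_R)(-a)$ using that $R/W_R$ is Gorenstein of dimension $d-1$, identifies this $\Ext^1$ with $H^{-d+1}$ of the brutal truncation $J_R^{>-d}$ of the graded dualizing complex, compares its $\cC(\M)$-graded part with that of the seminormalization to produce a graded injection $\upl R/W_{\upl R}\hookrightarrow(R/W_R)(-a)$, and only then concludes $a=0$ and hence $\omega_R\cong W_R$ in $\*Mod R$. Nothing in your sketch substitutes for this step.

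Your subsequent ``bootstrap over faces'' is likewise not carried out and is harder than what is needed: CM-ness and the shape of $\omega_R$ do not obviously descend to the face rings $\kk[\M_F]$. The paper instead gets seminormality of $R$ immediately from the graded isomorphism plus CM-ness via Theorem~\ref{RRBLR}(iii) (the support of $H^d_\m(R)$ lies in $-\cC(\M)$ because $W_R\subset R$), observes that seminormality forces $\M\cap\int(\cC(\M))=\bM\cap\int(\cC(\M))$, hence $W_R=\omega_{\baR}$, and finishes with $\baR\cong\Hom_R(\omega_{\baR},\omega_R)=\Hom_R(W_R,\omega_R)\cong\Hom_R(\omega_R,\omega_R)\cong R$. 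A minor but real error elsewhere: your indexing of $\cpx I_R$ is reversed --- $I^{-d}_R=\kk[\M]$ (the face $F=\cC(\M)$) and $I^0_R=\kk$ (the vertex) --- so in (ii)$\Rightarrow$(iii) the canonical module appears as $H^{-d}(\cpx I_R)=\ker(I^{-d}_R\to I^{-d+1}_R)$, a submodule of $R$; the paper disposes of this implication by the explicit computation of Theorem~6.3.4 of Bruns--Herzog with the $\ZZ^d$-graded Matlis dual.
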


The implication (i) $\Rightarrow$ (iii)  is a classical result due to Hochster, Stanley and Danilov.  
Note that if $R$ is normal then $\omega_R \cong W_R$ even in $\*Mod R$.

\begin{proof}
(i) $\Rightarrow$ (ii): We have mentioned above. 

(ii)  $\Rightarrow$ (iii): The assertion follows form  direct computation similar to  the proof of \cite[Theorem~6.3.4]{BH} 
(but we have to take the $\ZZ^d$-graded Matlis dual). 

(iii) $\Rightarrow$ (i): 
Since $W_R$ and $\omega_R$ are $\ZZ^d$-graded modules, $\Hom_R(W_R, \omega_R)$ has the natural $\ZZ^d$-grading. 
On the other hand, since  $W_R \cong \omega_R$ in $\Mod R$ now, we have $\Hom_R(W_R, \omega_R) \cong R$ in $\Mod R$. 
Since the unit group of $R$ is $\kk \setminus \{ 0 \}$, the way to equip the (ungraded) module $R$ with a $\ZZ^d$-grading is unique up to a  shift. 
Hence there is $a \in \ZZ^d$ such that $\Hom_R(W_R, \omega_R) \cong R(-a)$ in $\*Mod R$. 
We use $a$ in this meaning throughout this proof. 

By \cite[Proposition~3.3.18]{BH}, $R/W_R$ is a Gorenstein ring of dimension $d-1$ and $\Ext_R^1(R/W_R, \omega_R) \cong R/W_R$ in $\Mod R$. 
By an argument similar to the above, these are isomorphic even in $\*Mod R$ 
up to a degree shift. Since $\Hom_R(W_R,\omega_R) \cong R(-a)$ 
in $\*Mod R$,  the short exact sequence $0 \too W_R \too R \too R/W_R \too 0$ yields 
\begin{equation}\label{hihihi}
\Ext_R^1(R/W_R, \omega_R) \cong (R/W_R)(-a).
\end{equation}
 
Note that $\cpx J_{R/W_R} := \Hom_R^\bullet(R/W_R, \cpx J_R)$ is the $\ZZ^d$-graded dualizing complex of $R/W_R$, and 
\begin{equation}\label{huhuhu}
H^{-d+1}(\cpx J_{R/W_R}) \cong \Ext_R^1(R/W_R, \omega_R)
\end{equation} 
in $\*Mod R$. Since $$\Hom_R(R/W_R, \EE(\kk[\M_F]))=
\begin{cases}
0 & \text{if $F=\cC(\M)$,}\\
\EE(\kk[\M_F]) & \text{if $F$ is a proper face of $\cC(\M)$,}
\end{cases}$$ 
$\cpx J_{R/W_R}$ coincides with  
the brutal truncation $J_R^{> -d}$ of $\cpx J_R$ (for this assertion, we do not use any assumption on $R=\kk[\M]$).

Let $\upl R=\kk[\upl \M]$ be the seminormalization of $R$. Since 
$$(J^i_{R/W_R})_{\cC(\M)} = (J_R^i)_{\cC(\M)}=\upl I_{\upl R}^i$$ for all $i > -d$, we have 
$$(\cpx J_{\upl R/W_{\upl R}})_{\cC(\M)}=  \upl I^{> -d}_{\upl R} = (\cpx J_{R/W_R})_{\cC(\M)},$$
where $\cpx J_{\upl R/W_{\upl R}}$  is the $\ZZ^d$-graded dualizing complex of $\upl R/W_{\upl R}$.  
Hence we have 
$$[H^{-d+1}(\cpx J_{R/W_R})]_{\cC(\M)} \cong [H^{-d+1}(\cpx J_{\upl R/W_{\upl R}})]_{\cC(\M)} 
\cong [\Ext^1_R(\upl R/W_{\upl R}, \omega_R)]_{\cC(\M)}.$$  

If $\upl R$ is {\it normal}, then $W_{\upl R}$ is its canonical module,  and 
 $$[H^{-d+1}(\cpx J_{R/W_R})]_{\cC(\M)} \cong \Ext^1_R(\upl R/W_{\upl R}, \omega_R)
\cong \upl R/W_{\upl R}.$$ 
In general, there might be gap between $[H^{-d+1}(\cpx J_{R/W_R})]_{\cC(\M)}$  and $\upl R/W_{\upl R}$,  
but an easy computation shows  that $H^{-d+1}(\cpx J_{R/W_R})$   
still contains a submodule which is isomorphic to $\upl R/ W_{\upl R}$ in $\*Mod R$.  
(Note that $[H^{-d+1}(\cpx J_{R/W_R})]_{\cC(\M)}$ is isomorphic to the kernel of  $\partial: \upl I_{\upl R}^{-d+1} 
\to \upl I_{\upl R}^{-d+2}$.) 
Combining this fact with  \eqref{hihihi} and \eqref{huhuhu}, we have a $\ZZ^d$-graded injection   
$$\upl R/W_{\upl R} \hookrightarrow (R/W_R)(-a).$$ 
This implies that  $a=0$, and hence $W_R\cong \omega_R$ in $\*Mod R$.  
Since $H_\m^d(R)_b \, (=(\omega_R)_{-b} = (W_R)_{-b}) \ne 0$ implies $b \in -\cC(M)$, $R$ is seminormal by Theorem~\ref{RRBLR}.  

Since $R$ is seminormal, we have 
$$\M \cap \int (\cC(\M)) = \ZZ \M \cap \int (\cC(\M)) = \overline{\M} \cap \int (\cC(\M)),$$
and $W_R$ coincides  with the canonical module $\omega_{\baR} \ (=W_{\baR})$ of $\baR$, 
where $\baR=\kk[\bM]$ with $\bM=\ZZ \M \cap \cC(\M)$ is the normalization of $R$. 
Hence we have  
$$\baR \cong \Hom_R(\omega_{\baR}, \omega_R) = \Hom_R(W_R, \omega_R)\cong \Hom_R(\omega_R, \omega_R) \cong R$$ 
in $\Mod R$. 
Hence $\baR \cong R$ and $R$ is normal.  
\end{proof}

\begin{rem}\label{not subcomplex}
Let $\baR=\kk[\bM]$ be the normalization of $R=\kk[\M]$. 
For a face $F$ of $\cC(\M)$, $\ZZ \M_F$ is a sublattice of $\ZZ \bM_F$, and hence 
$\kk[\ZZ \M_F \cap F]$ is a direct summand of $\kk[\bM_F]$ as an $R$-module. 
So $\upl I_R^i$ is a submodule (actually,  a direct summand) of $I_{\baR}^i$ for each $i$, but it 
does {\it not} mean $\upl \cpx I_R$ is a subcomplex of $\cpx I_{\baR}$. 

For example, consider the seminormal semigroup $\M$ given in Example~\ref{seminormal, but nonnormal}.    
Then  $R$ is of the form $\kk[x^2, y,xy]$. 
In this case,  $\upl I_R^{-2} = \kk[x,y]$, $\upl I_R^{-1}= \kk[x^2] \oplus \kk[y]$, and the degree 
$(1,0)$ component of $\partial: \upl I_R^{-2} \to \upl I_R^{-1}$ is the zero map. 
On the other hand, the normalization $\baR$ of $R$ is $\kk[x,y]$. 
Hence  $\upl I_{\baR}^{-2} = \kk[x,y]$, $\upl I_{\baR}^{-1}= \kk[x] \oplus \kk[y]$, and the degree 
$(1,0)$ component of $\partial: \upl I_{\baR}^{-2} \to \upl I_{\baR}^{-1}$ is non-zero.

Anyway, this phenomena makes the proof of Theorem~\ref{main} below complicated.  
\end{rem}

\section{Preliminaries on toric face rings}
Let $\cell$ be a finite regular CW complex with the 
intersection property, and $X$ its underlying topological space. 
More precisely, the following conditions are satisfied.   
\begin{enumerate}
\item $\emptyset \in \cell$ (for the convenience, we set $\dim \emptyset = -1$), 
$X = \bigcup_{\sigma \in \cell } \sigma$, and the cells $\sigma \in \cell $ are pairwise disjoint;
\item If $\emptyset \ne \sigma \in \cell $, then, for some $i \in \NN$, 
there exists a homeomorphism from the $i$-dimensional ball 
$\{ x \in \RR^i \mid ||x|| \leq 1 \}$ to the closure 
$\overline{\sigma}$ of $\sigma$ which maps $\{ x \in \RR^i \mid ||x|| < 1 \}$ onto $\sigma$; 
\item For $\sigma \in \cell$, the closure 
$\overline{\sigma}$ is the union of some cells in $\cell$; 
\item For $\s,\t \in \cell$, there is a cell $\u \in \cell$ such that 
$\overline \u = \overline \s \cap \overline \t$ (here $\u$ can be $\emptyset$).
\end{enumerate}  

\smallskip

We regard $\cell$ as a partially ordered set ({\it poset} for short) by $\s \geq \t \stackrel{\text{def}}
{\Longleftrightarrow} \overline \s \supset \t$. 

The following definitions of conical complexes and monoidal complexes are taken from \cite{OY}, and equivalent to the original ones in  
Bruns, Koch and R\"omer \cite{BKR} under the assumption that the cones $C_\s$ contain no line (equivalently, 
the semigroups $\M_\s$ are all positive).  
However, the notation has been changed a little from that of \cite{OY} for the usages in the present paper.

\begin{dfn}\label{sec:cell_cpx_ver}
A {\it conical complex} $(\C, \cell, \{ \i_{\s,\t}\} )$ on $\cell$ consists of the following data. 
\begin{enumerate}
\item[(0)] To each $\s \in \cell$, we assign an Euclidean space $\bE_\s= \RR^{\dim \s+1}$. 
\item[(1)] $\C = \{ \, C_\s \mid \s \in \cell \, \}$, where 
$C_{\s} \subset \bE_\s = \RR^{\dim \s + 1}$ is a polyhedral cone 
with $\dim C_{\s} = \dim \s + 1$. Here each cone $C_\s$ contains no line. 
\item[(2)] The injection $\i_{\s,\t}:C_{\t} \to C_{\s}$ for $\s, \t \in \cell$ with 
$\s \geq \t$ satisfying the following. 
\begin{enumerate}
\item $\i_{\s,\t}$ can be lifted to a linear map  
$\tilde{\i}_{\s,\t} :\bE_\t \to \bE_\s$. 
\item  The image $\i_{\s, \t}(C_{\t})$ is a face of $C_{\s}$.  
Conversely, for a face $C'$ of $C_\s$, there is a {\it sole} cell $\t$ with $\t \leq \s$
such that $\i_{\s,\t}(C_\t) = C'$. 
\item $\i_{\s, \s} = \idmap_{C_{\s}}$ and $\i_{\s,\t} \circ \i_{\t,\u}$ = $\i_{\s,\u}$ for $\s,\t,\u \in \cell$ with $\s \geq \t \geq \u$.
\end{enumerate}
\end{enumerate}
\end{dfn}

A polyhedral fan $\Sigma$ in $\RR^n$ gives a conical complex. 
In this case, as an underlying CW complex, we can take 
$\{ \, \int(C \cap \sph^{n-1}) \mid C \in \C \, \}$, where $\sph^{n-1}$ 
is the unit sphere in $\RR^n$, and the injections $\i_{\s, \t}$ are inclusion maps.

\begin{exmp}
Consider the following  cell decomposition of a M\"obius strip. 
\begin{figure}[h]\label{sec:Moebius}
$
\xy /r2.0pc/:,
{\xypolygon3"A"{~={75}~:{(-1,1.7)::}~>{}\bullet}},
+(.8,.8),
{\xypolygon3"B"{~={75}~:{(-1,1.7)::}~>{}\bullet}},
{"A1"\PATH~={**@{-}}'"A2"'"A3"'"B3"'"B2"'"B1"'"A1"},
"A2";"B2"**@{-},
{\vtwist~{"A1"}{"B1"}{"A3"}{"B3"}},
"A1"*+!RD{x}, "A2"*+!R{y}, "A3"*+!LU{z},
"B1"*+!RD{u}, "B2"*+!R{v}, "B3"*+!LU{w}
\endxy
$
\end{figure}
\noindent Regarding each rectangles as the cross-sections of 3-dimensional cones,
we have a conical complex that is not a fan (see \cite[Example~1.36]{BG}).
\end{exmp}

Let $\bL_\s$ be the set of lattice points  $\ZZ^{\dim \s +1}$ of $\bE_\s = \RR^{\dim \s +1}$. 
Assume that 
$ \tilde{\i}_{\s, \t}(\bL_\t) = \tilde{\i}_{\s, \t}(\bE_\t) \cap \bL_\s$ for all $\s, \t \in \cell$ with $\s \geq \t$.

\begin{dfn}\label{monoidal complex}
A \term{monoidal complex} supported by a conical complex $(\C,\cell, \{ \i_{\s,\t}\})$ is a set of monoids
$\MM=\mbra{\M_\s}_{\s \in \cell}$ with the following conditions:
\begin{enumerate}
\item $\M_\s \subset \bL_\s = \ZZ^{\dim \s+1}$ for each $\s\in \cell$, 
and it is a finitely generated additive submonoid (so $\M_\s$ is an affine semigroup);
\item $\M_\s \subset C_\s$ and $\RR_{\geq 0} \M_\s = C_\s$ for each $\s\in \cell$; 
\item for $\s,\t \in \cell$ with $\s \ge \t$, the map $\i_{\s,\t}:C_\t \to C_\s$ induces an isomorphism
$\M_\t \cong \M_\s \cap \i_{\s,\t}(C_\t)$ of monoids.
\end{enumerate}
\end{dfn}

If $\C$ is a rational fan in $\RR^n$, then $\set{C \cap \ZZ^{n}} {C \in \C}$ gives 
a monoidal complex. More generally, taking submonoids of $C \cap \ZZ^{n}$ carefully,  
we can get a ``non-normal" monoidal complex. 

\medskip

For a monoidal complex $\MM=\mbra{\M_\s}_{\s \in \cell}$, set
$$
\sM := \colimit_{\s \in \cell}\M_\s, 
$$
where the direct limit is taken with respect to  $\i_{\s, \t} : \M_\t \to \M_\s$
 for $\s, \t \in \cell$ with $\s \ge \t$.
Note that $\sM$ is just a set and no longer a monoid in general.  
Since all $\i_{\s,\t}$ are injective, we can regard $\M_\s$ as a subset of $\sM$. 
For example, if $\{ \M_\s \}_{\s \in \cell}$ comes from a fan in $\RR^n$, then $\sM = \bigcup_{\s \in \cell} \M_\s \subset \ZZ^n$.

Let $a, b \in \sM$. If there is some $\s \in \cell$ with $a, b \in C_\s$, there is a unique minimal cell among 
these $\s$'s. (In fact, if $C_{\s_1}, C_{\s_2} \in \cell$ contain both $a$ and $b$, there is a cell $\t \in \cell$ with 
$\overline \t = \overline \s_1 \cap \overline \s_2$ by our assumption on $\cell$, and $C_\t$ contains both $a$ and $b$.) 
If $\s$ is the minimal one with this property, we have $a, b \in \M_\s$ and we can define $a + b \in \M_\s \subset \sM$. 
If there is no $\s \in \cell$ with $a,b \in C_\s$, then $a + b$ does not exist.

\begin{dfn}[\cite{BKR}]\label{non-multi-graded }
Let $\{ \M_\s \}_{\s \in \cell}$ be a monoidal complex with $\sM := \colimit \M_\s$, and $\kk$ a field. Then the $\kk$-vector space
$$
\fring \MM := \bigoplus_{a \in \sM} \kk \, x^a,
$$
where $x$ is a variable, equipped with the following multiplication
$$
x^a \cdot x^b = \begin{cases}
x^{a+b} & \text{if $a+b$ exists,}\\
0       & \text{otherwise,}
\end{cases}
$$
has a $\kk$-algebra structure. 
We call $\fring \MM$ the \term{toric face ring} of $\MM$ over $\kk$.
\end{dfn}

Clearly,  $\dim \kk[\MM] = \dim \cell +1$. In the rest of this paper, we set $d:= \dim \kk[\MM]$.  
Stanley-Reisner rings and affine semigroup rings (of 
positive semigroups) can be established as toric face rings.
If $\MM$ comes from a fan in $\RR^n$, then $\kk[\MM]$ admits a $\ZZ^n$-grading 
with $\dim_\kk \kk[\MM]_a \leq 1$ for all $a \in \ZZ^n$. 
But this is not true in general.

\begin{exmp}[{\cite[Example 4.6]{BKR}}]\label{sec:Moebius_ring}
Consider the conical complex in Example \ref{sec:Moebius}. 
Assigning  normal semigroup rings of the form $\kk[a,b,c,d]/(ac-bd)$ 
to each rectangles, we have a toric face ring of the form 
$$\kk[x, y, z, u, v, w]/(xv - uy, vz - yw, 
xz - uw, uvw, uvz),$$
which does not admit a nice multi-grading. 
We can also get a similar example whose $\kk[\M_\s]$ are not normal. 
\end{exmp}

We say a toric face ring $R=\kk[\MM]$ is {\it cone-wise normal}, if $\kk[\M_\s]$ is normal for 
all $\s \in \cell$.  
The notion of cone-wise normal toric face rings coincides with that of 
the ring $\kk[{\mathcal WF}]$ associated with  a {\it weak fan} ${\mathcal WF}$
introduced by  Bruns and Gubeladze \cite{BG02}. 
They gave an example of a cone-wise normal toric face ring which does not admit 
a $\ZZ$-grading with $R_0=\kk$ (\cite[Example~2.7]{BG02}).

\medskip

For $\s \in \cell$,  a monomial ideal $\p_\s := (x^a \mid a \in \sM \setminus \M_\s)$ of $R$ 
is prime. In fact, the quotient ring $R/\p_\s$ is isomorphic to the affine semigroup ring 
$\kk[\M_\s]$.  
We regard $\kk[\M_\s]$ as an $R$-module, through $R/\p_\s \cong \kk[\M_\s]$.

 Set 
$$I^{-i}_R := \bigoplus_{\substack{\s \in \cell \\ \dim \s = i-1}} \kk[\M_\s]$$
for $i = 0, \dots , d$, and define $\partial:I^{-i}_R \to I^{-i+1}_R$
by
$$
\partial(y) = \sum_{\substack{\dim \t =i-2 \\ \t \leq \s}}\e(\s,\t) \cdot 
\pi_{\t,\s}(y)
$$
for $y \in \kk[\M_\s] \subset I^{-i}_R$,
where  $\pi_{\t,\s}$ is the natural surjection $\kk[\M_\s] \to \kk[\M_\t]$
(note that if $\t \leq \s$ then $\p_\s \subset \p_\t$) and 
$\e$ is an incidence function of $\cell$. Then
$$
\cpx I_R: 0 \longto I^{-d}_R \longto I^{-d + 1}_R 
\longto \cdots \longto I^0_R \longto 0
$$
is a cochain complex of  finitely generated $R$-modules. 
The following is the main result of \cite{OY}.  

\begin{thm}[{\cite[Theorem~5.2]{OY}}]\label{sec:ishida}
If $R$ is cone-wise normal, then 
$\cpx I_R$ is quasi-isomorphic to the dualizing complex $\cpx D_R$ of $R$.
\end{thm}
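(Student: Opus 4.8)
The plan is to argue by induction on the number $|\cell|$ of cells of the underlying CW complex. Since $R=\kk[\MM]$ is a homomorphic image of a polynomial ring, it admits a dualizing complex, and hence $\inHom_R(-,\cpx D_R)$ is a contravariant involution on the bounded derived category of finitely generated $R$-modules; in particular it suffices to prove $\inHom_R(\cpx I_R,\cpx D_R)\cong R$, for then $\cpx I_R\cong\inHom_R(R,\cpx D_R)=\cpx D_R$. For the base case, assume $\cell$ is the face poset of a single polyhedral cone $C$ (including $\cell=\{\emptyset\}$, where $R=\kk$). Then $\MM$ consists of all face submonoids of a single normal affine semigroup $\M$ with $\cC(\M)=C$, so $R=\kk[\M]$; under the bijection $\s\leftrightarrow C_\s$ between cells of $\cell$ and faces of $C$ (with $\dim\s+1=\dim C_\s$) the complex $\cpx I_R$ defined in \S4 agrees termwise and differentialwise with the complex $\cpx I_R$ defined for $\kk[\M]$ in \S3, which by Theorem~\ref{normal}, (i)$\Rightarrow$(ii), is quasi-isomorphic to $\cpx D_R$.

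For the inductive step, suppose $\cell$ is not the face poset of a cone and fix a maximal cell $\s_0\in\cell$. The subsets
$$\cell_1:=\cell\setminus\{\s_0\},\qquad \cell_2:=\{\t\in\cell\mid\t\le\s_0\},\qquad \cell_0:=\cell_2\setminus\{\s_0\}$$
are CW subcomplexes with $\cell=\cell_1\cup\cell_2$, $\cell_1\cap\cell_2=\cell_0$, each with strictly fewer cells than $\cell$. Restricting $\MM$ and the incidence function of $\cell$ to these yields cone-wise normal toric face rings $R_0,R_1,R_2$, each a quotient of $R$ by a monomial ideal, and $R_2=\kk[\M_{\s_0}]$ is a normal affine semigroup ring. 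First I would set down the two exact sequences
\begin{equation}\label{plan-glue}
0\longto R\longto R_1\oplus R_2\longto R_0\longto 0
\end{equation}
of $R$-modules, with maps the restriction (quotient) homomorphisms --- this encodes that $|\MM|=\M_{\s_0}\cup\bigcup_{\t\ne\s_0}\M_\t$ with $\M_{\s_0}$ meeting $\bigcup_{\t\ne\s_0}\M_\t$ precisely along its boundary monoid --- and, since each $\cell_j$ is closed under taking faces so that $\cpx I_{R_j}$ is a genuine subcomplex of $\cpx I_R$ (the one supported on the cells of $\cell_j$), the Mayer--Vietoris sequence of complexes of $R$-modules
\begin{equation}\label{plan-MV}
0\longto \cpx I_{R_0}\longto \cpx I_{R_1}\oplus \cpx I_{R_2}\longto \cpx I_R\longto 0.
\end{equation}

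Applying $\inHom_R(-,\cpx D_R)$ to \eqref{plan-MV} produces a distinguished triangle whose terms are $\inHom_R(\cpx I_R,\cpx D_R)$, $\inHom_R(\cpx I_{R_1},\cpx D_R)\oplus\inHom_R(\cpx I_{R_2},\cpx D_R)$, and $\inHom_R(\cpx I_{R_0},\cpx D_R)$. For each $j$, the adjunction along the quotient map $R\to R_j$ gives
$$\inHom_R(\cpx I_{R_j},\cpx D_R)\cong\inHom_{R_j}(\cpx I_{R_j},\cpx D_{R_j})\cong\inHom_{R_j}(\cpx D_{R_j},\cpx D_{R_j})\cong R_j,$$
using $\inHom_R(R_j,\cpx D_R)=\cpx D_{R_j}$, then the inductive hypothesis ($|\cell_j|<|\cell|$), then the homothety isomorphism of a dualizing complex. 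Hence the triangle becomes
$$\inHom_R(\cpx I_R,\cpx D_R)\longto R_1\oplus R_2\longto R_0\longto \inHom_R(\cpx I_R,\cpx D_R)[1],$$
and its arrow $R_1\oplus R_2\to R_0$ coincides with the restriction map of \eqref{plan-glue}, which is surjective with kernel $R$; thus $\inHom_R(\cpx I_R,\cpx D_R)\cong R$, closing the induction.

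The step that carries all the weight is the claim, just invoked, that the arrow $R_1\oplus R_2\to R_0$ of the triangle equals the restriction map of \eqref{plan-glue}: this is a naturality statement, asserting that the isomorphisms $\cpx I_{R_j}\cong\cpx D_{R_j}$ intertwine the restriction maps $\cpx I_R\to\cpx I_{R_j}$ with the canonical maps between dualizing complexes, and to pin it down one must either construct the quasi-isomorphism $\cpx I_S\to\cpx D_S$ functorially in the toric face ring $S$ (compatibly with passage to quotients by monomial ideals) or argue by hand with residual complexes. One manageable case is the single-cell inclusion $\cpx I_{R_0}\hookrightarrow\cpx I_{R_2}$: its cokernel is $\kk[\M_{\s_0}]$ in one cohomological degree, so dualizing collapses the triangle to the classical exact sequence $0\to\omega_{\kk[\M_{\s_0}]}\to\kk[\M_{\s_0}]\to R_0\to 0$ whose surjection is exactly the restriction $R_2\to R_0$, and filtering $\cell_1$ over $\cell_0$ one cell at a time and iterating this takes care of $\cpx I_{R_0}\hookrightarrow\cpx I_{R_1}$ too. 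Apart from this, the only remaining point --- routine --- is the incidence-sign bookkeeping that makes \eqref{plan-MV} exact as a sequence of complexes. I expect this naturality issue to be the main obstacle.
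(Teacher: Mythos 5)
Your strategy is sound but genuinely different from the paper's. The paper (following \cite{OY}) first realizes $\cpx I_R$ as an honest subcomplex of $\cpx D_R$: for each cell $\s$ it carves out a canonical copy $i_\s(\kk[\M_\s])\subset D_R^{-c(\s)}$ by computing $\Ext_R^{-c(\s)}(\omega_\s,\cpx D_R)$, uses the $\ZZ^{c(\s)}$-graded dualizing complexes $\cpx J_\s$ to check that these copies assemble into a subcomplex, and then proves the inclusion is a quasi-isomorphism by evaluating the resulting natural transformation on the indecomposable injectives $\kk[\M_\s]$ of the squarefree category $\Sq R$ (via \cite[Proposition~7.1]{Ha}), which reduces everything to the single-cone case. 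You instead induct on $|\cell|$ with a Mayer--Vietoris decomposition and never construct a chain map at all. The obstacle you flag --- identifying the dualized arrow $R_1\oplus R_2\to R_0$ with the restriction --- is indeed the crux and is precisely what the paper's chain-level construction is built to avoid; but your one-cell-at-a-time patch does close it, for a reason worth stating explicitly: at each single-cell step the quotient complex is $\kk[\M_\t]$ concentrated in degree $-c(\t)$, whose dual is $\omega_{\kk[\M_\t]}$ concentrated in degree $0$ (this is where cone-wise normality, hence Cohen--Macaulayness, enters), so the long exact sequence forces the identified map $R'\to R''$ to be a surjection of modules sitting in degree $0$; and since $\Hom_R(R',R'')=R''$ while the units of the reduced, positively graded ring $R''$ are just $\kk\setminus\{0\}$, \emph{any} surjection $R'\to R''$ is a nonzero scalar multiple of the canonical restriction. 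Composing along the filtration (which stays inside $\cell_1\subsetneq\cell$, so the inductive hypothesis applies to every intermediate subcomplex) and observing that the scalar ambiguities do not change the kernel of $(c_1\rho_1,-c_2\rho_2)$ up to isomorphism, you do get $\inHom_R(\cpx I_R,\cpx D_R)\cong R$. What your route buys is independence from the squarefree-module machinery and from any graded model of $\cpx D_R$ beyond the single-cone case; what the paper's route buys is an explicit quasi-isomorphism $\psi:\cpx I_R\to\cpx D_R$ compatible with all the restrictions to $\kk[\M_\s]$, which is exactly what Lemma~\ref{psi_s} and the proof of Theorem~\ref{main} consume later --- so your derived-category-only argument proves this theorem but would not serve as a drop-in replacement for the subsequent sections.
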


The proof of the main result in the next section largely depends on (the proof of) Theorem~\ref{sec:ishida}, but the proof in \cite{OY} is long and technical.  
So we summarize it here for the reader's convenience. See \cite{OY} for details. 

\bigskip

\noindent{\it An outline of the proof of Theorem~\ref{sec:ishida}.}
To prove the theorem, we realize $\cpx I_R$ as a subcomplex of $\cpx D_R$.  
Set $c(\s):=\dim \s +1 =\dim \kk[\M_\s]$ for a cell $\s$. 
The proof is divided into three steps. 

\medskip

\noindent{\it Step 1.} We have a canonical injection $i_\s:\kk[\M_\s] \hookrightarrow D_R^{-c(\s)}$.

\medskip

We fix a cell $\s$, and set $c:=c(\s)$. 
Since $\kk[\M_\s]$ is normal, it is Cohen-Macaulay and admits the canonical module simply denoted by $\omega_\s$.   
Note that 
$$ H^{-c}(\Hom_R(\omega_\s, \cpx D_R)) = \Ext_R^{-c}(\omega_\s, \cpx D_R) \cong \kk[\M_\s].$$
Since $\Hom_R(\omega_\s, D_R^{-c-1})=0$, the cohomology $H^{-c}(\Hom_R(\omega_\s, \cpx D_R))$ is the kernel of the map 
\begin{equation}\label{partial_*} 
\Hom_R(\omega_\s, \partial_{\cpx D_R}) : \Hom_R(\omega_\s, D_R^{-c}) \too \Hom_R(\omega_\s, D_R^{-c+1}).  
\end{equation}
Through the identification, 
$$\Hom_R(\omega_\s, D_R^{-c})=
\Hom_R(\kk[\M_\s], D_R^{-c})\\
\cong \{ \, y \in D_R^{-c} \mid \p_\s y=0 \, \},$$
the kernel of the map \eqref{partial_*} is 
$$i_\s (\kk[\M_\s]) := \{\, y \in D_R^{-c} \mid \p_\s y=0 \ \text{and} \ \partial_{\cpx D_R}(\q_\s y)=0 \, \},$$
where $\q_\s$ is the set  $\{ \, x^a \in R \mid  a \in (\M_\s \cap \int(C_\s))\, \}$. 
(Note that $\omega_\s$ is the ideal of $\kk[\M_\s]$ generated by $\q_\s$.) 
Clearly, $i_\s(\kk[\M_\s]) \cong \kk[\M_\s]$. 

Of course, we just chose the {\it subset} $i_\s(\kk[\M_\s])$ of $D_R^{-c}$, not an injection 
$i_\s:\kk[\M_\s] \hookrightarrow D_R^{-c}$. 
However, the $R$-module $\kk[\M_\s]$ is generated by a single element, and the choice of a generator (i.e., 
 the choice of $i_\s$) is unique up to constant multiplication. 
This small ambiguity does not affect the argument below. \qed

\medskip

\noindent{\it Step 2. } $\bigoplus_{\s \in \cell} i_\s(\kk[\M_\s])$ is a subcomplex of $\cpx D_R$.

\medskip

The dualizing complex $\cpx D_\s := \cpx D_{\kk[\M_\s]}$ of $\kk[\M_\s]$ coincides with $\Hom_R(\kk[\M_\s], \cpx D_R)$, 
which can be seen as a subcomplex of $\cpx D_R$.  
Since $\kk[\M_\s]$ is $\ZZ^{c(\s)}$-graded, we have the $\ZZ^{c(\s)}$-graded dualizing complex $\cpx J_\s := \cpx J_{\kk[\M_\s]}$, and  
a quasi-isomorphism $\cpx J_\s \to \cpx D_\s$. 
Composing this morphism with $\cpx D_\s \to \cpx D_R$, we get a chain map $h_\s: \cpx J_\s \to \cpx D_R$  
which induces 
\begin{equation}\label{rururu}
H^i(\Hom_R(\omega_\s, \cpx J_\s)) \cong H^i(\Hom_R(\omega_\s, \cpx D_R)).
\end{equation}

Applying the same argument as Step 1, we have an injection 
${}^* i_{\s, \t} : \kk[\M_\t] \hookrightarrow J_\s^{-c(\t)}$ for a cell $\t$ with $\t \leq \s$.  
By \eqref{rururu}, it is easy to see that $$i_\t(\kk[\M_\t]) = h_\s \circ {}^* i_{\s, \t}(\kk[\M_\t]).$$

On the other hand, we have that 
\begin{equation}\label{relayer}
(\cpx J_\s)_{C_\s}=\bigoplus_{\t \leq \s} {}^*i_{\s,\t}(\kk[\M_\t]),
\end{equation}
where $C_\s$ is the polyhedral cone spanned by $\M_\s$.  
Since $\cpx J_\s$ is a $\ZZ^{c(\s)}$-graded complex, the right side of \eqref{relayer} is a subcomplex of $\cpx J_\s$. 
Since $h_\s$ is a chain map, $\bigoplus_{\t \leq \s} i_\s(\kk[\M_\t])$ 
forms a subcomplex of $\cpx D_R$. It implies that $\bigoplus_{\s \in \cell} i_\s(\kk[\M_\s])$ is also a subcomplex of $\cpx D_R$. \qed 

\medskip 

Since $\bigoplus_{\s \in \cell} i_\s(\kk[\M_\s])$ is isomorphic to $\cpx I_R$, 
it suffices to show the following.

\medskip

\noindent{\it Step 3.} $\cpx D_R$ is quasi-isomorphic to its subcomplex $\bigoplus_{\s \in \cell} i_\s(\kk[\M_\s])$.

\medskip

The argument for this step will be used around the proof of Theorem~\ref{Phi} 
after a slight generalization. 
There, we  explain this idea in detail, so we do not give a summary here.  
\qed

\section{Dualizing complexes of seminormal toric face rings}
We start from the following fact pointed out by Nguyen \cite{Ngu}.  

\begin{prop}[{\cite[Proposition~3.5]{Ngu}}]\label{Ngu}
For a toric face ring $\kk[\MM]$, the following are equivalent. 
\begin{itemize}
\item[(i)] $\kk[\MM]$ is seminormal. 
\item[(ii)] $\kk[\M_\s]$ is seminormal for all $\s \in \cell$. 
\end{itemize} 
\end{prop}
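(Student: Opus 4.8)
The plan is to reduce the statement to an essentially local computation by exploiting the $\sM$-graded structure of $R = \kk[\MM]$ and the behaviour of seminormality under localization. The key structural fact is that for each cell $\s$, the quotient $R/\p_\s \cong \kk[\M_\s]$, so (ii) is a statement about these particular quotients of $R$; and conversely each $\kk[\M_\s]$ arises by first localizing $R$ (inverting the monomials $x^a$ for $a$ in the relative interior of $C_\s$, which kills all $\p_\t$ with $\t \not\geq \s$ and makes $\p_\s$ minimal) and then passing to a quotient. This ``localize-then-quotient'' dictionary is what lets information flow in both directions.

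For (i) $\Rightarrow$ (ii), first I would recall that seminormality localizes: if $R$ is seminormal and $S \subset R$ is multiplicatively closed, then $S^{-1}R$ is seminormal. Applying this with $S = \{ x^a \mid a \in \M_\s \cap \int(C_\s) \}$ (or more precisely the multiplicative set generated by a single interior monomial, as in Step~1 of the outline of Theorem~\ref{sec:ishida}), the localization $R_\s := S^{-1}R$ is seminormal. In $R_\s$ the prime $\p_\s$ becomes the unique minimal prime whose quotient is a localization of $\kk[\M_\s]$; since $\kk[\M_\s]$ is a domain, $\p_\s R_\s$ is actually the nilradical, so $R_\s$ reduced forces $R_\s \cong S^{-1}\kk[\M_\s]$, which is therefore seminormal. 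Seminormality of a $\ZZ^{\dim\s+1}$-graded affine semigroup ring can then be checked on the semigroup level via Theorem~\ref{RRBLR}: $\kk[\M_\s]$ seminormal $\iff$ $\M_\s = \upl\M_\s$. Since inverting an interior monomial does not change $\M_\s$ modulo the group it generates, the seminormality of $S^{-1}\kk[\M_\s]$ pulls back to seminormality of $\kk[\M_\s]$ itself. (Alternatively one can argue directly: a $2$-$3$ witness $a \in Q(\kk[\M_\s])$ for non-seminormality of $\kk[\M_\s]$ lifts to a $2$-$3$ witness in $Q(R)$ using the section $\kk[\M_\s] \hookrightarrow \EE(\kk[\M_\s])_{\cC} \subset \text{(suitable overring)}$, contradicting (i).)

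For (ii) $\Rightarrow$ (i), I would use the characterization of seminormality of a reduced ring $A$ in terms of the conductor square relating $A$ to its normalization $\bar A$ (Swan's criterion, as in \cite{Sw}): $A$ is seminormal iff the conductor ideal $\mathfrak{c} = (A :_A \bar A)$ is a radical ideal of $\bar A$, equivalently $\bar A / \mathfrak{c}$ is reduced. The normalization of $R = \kk[\MM]$ is the cone-wise normalization $\oR$ obtained by replacing each $\M_\s$ by $\bM_\s$ (this is where Example~\ref{bMM} and the fact that $\oR$ is again a toric face ring on the same $\cell$ enter). The conductor of $R$ in $\oR$ is $\sM$-graded, hence a ``monomial'' ideal, and I would show it decomposes cell-by-cell: its $C_\s$-graded part is the conductor of $\kk[\M_\s]$ in $\kk[\bM_\s]$. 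The point is then that $\oR/\mathfrak{c}$ reduced $\iff$ each $\kk[\bM_\s]/\mathfrak{c}_\s$ reduced $\iff$ each $\kk[\M_\s]$ seminormal, which is exactly (ii). Some care is needed because $\oR$ need not be a direct product of the $\kk[\bM_\s]$ — there is gluing along lower-dimensional cells — but the conductor ideal contains $\p_\s$-type components and the reducedness check is compatible with the gluing precisely because the normalizations glue along the same face lattice.

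The main obstacle is controlling the normalization and conductor of a toric face ring that is \emph{not} $\ZZ^n$-graded: the ring $R$ has no ambient polynomial ring and the construction of $\oR$ is, as the authors note, ``not straightforward.'' The cleanest route is probably to avoid $\oR$ altogether and run the localization argument of the first direction in reverse — i.e., observe that seminormality can be tested after localizing at each $\p_\s$-``stratum,'' and that $R$ localized at the generic point of each irreducible component of $\operatorname{Spec} R$ together with the gluing data is recoverable from the $\kk[\M_\s]$. Concretely: a total-quotient-ring element $a$ witnessing non-seminormality of $R$ is supported on finitely many components; restricting to the component indexed by a maximal-dimensional such $\s$ yields, after the interior-monomial localization, a witness for non-seminormality of $\kk[\M_\s]$. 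This keeps everything within the already-developed machinery (Theorem~\ref{RRBLR}, the $\sM$-grading, and Step~1 of the outline) and sidesteps the delicate global normalization.
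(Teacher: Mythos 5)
The paper offers no proof of this proposition at all: it is quoted from Nguyen \cite{Ngu}, so your argument has to stand on its own, and as written it has genuine gaps in both directions. For (i) $\Rightarrow$ (ii), the localization you choose does not detect seminormality of $\kk[\M_\s]$. First, inverting $S = \{x^a \mid a \in \M_\s \cap \int(C_\s)\}$ kills $\p_\t$ exactly for $\t \not\geq \s$, so unless $\s$ is a \emph{maximal} cell the primes $\p_\t$ with $\t > \s$ survive, $\p_\s R_\s$ is not the nilradical, and $R_\s \not\cong S^{-1}\kk[\M_\s]$. Second, and more seriously, even for maximal $\s$ one has $S^{-1}\kk[\M_\s] = \kk[\ZZ\M_\s]$: for any $b \in \ZZ\M_\s$ and interior $a$, $x^b = x^{c+ka}/x^{c'+ka}$ with $b = c - c'$ and $k \gg 0$. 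This is a Laurent polynomial ring, hence normal for \emph{every} $\M_\s$, so its seminormality carries no information and the asserted ``pull back'' to $\kk[\M_\s]$ is false.

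For (ii) $\Rightarrow$ (i), the identification of the normalization of $R$ with the cone-wise normalization is wrong on two counts: the normalization of the reduced ring $R$ in $Q(R)$ is the product of the $\kk[\bM_\s]$ over the maximal cells, with no gluing along faces; and the ring $\oR$ of the paper is built from $\tM_\s = \bL_\s \cap C_\s$, which by Example~\ref{bMM} is in general strictly larger than the normalization $\ZZ\M_\s \cap C_\s$ of $\kk[\M_\s]$ (the paper notes that the true normalizations do not even form a monoidal complex). Your fallback argument then skips the actual content of this implication: writing $q \in Q(R) = \prod Q(\kk[\M_\s])$ (product over maximal $\s$) with $q^2, q^3 \in R$, hypothesis (ii) places each component $q_\s$ in $\kk[\M_\s]$, but this does not yet place $q$ in $R$ --- one must still check that the components agree on common faces. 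That gluing step is where the work lies, and it does go through because in the domains $\kk[\M_\u]$ an element is determined by its square and cube ($f^2 = g^2$ and $f^3 = g^3$ force $f = g$); but without it, a $2$--$3$ witness for $R$ need not restrict to a witness for any single $\kk[\M_\s]$, so your contrapositive as stated does not close.
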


Recall the precise definition of a monoidal complex $\MM$ given in the previous section.  
For each $\s \in \cell$, let $\upl \M_\s \subset \bL_\s$ be the monoid constructed from $\M_\s$ by the operation in \eqref{monoid seminormalization}, 
that is, $\kk[\upl \M_\s]$ is the seminormalization of $\kk[\M_\s]$. 
Then $\upl \MM:= \{\upl \M_\s\}_{\s \in \cell}$ forms a monoidal complex, and 
$\upl R:= \kk[\upl \MM]$ is the seminormalization of $R:= \kk[\MM]$. 
In particular, $R$ is seminormal if and only if $\MM =\upl \MM$. 

On the other hand, $\kk[\ZZ \M_\s \cap C_\s]$ is the normalization 
 of $\kk[\M_\s]$ (since we do not assume 
that $\ZZ\MM_\s =\bL_\s$, we have  $\ZZ \M_\s \cap C_\s \ne \bL_\s \cap C_\s$ in general), 
but $\{\ZZ \M_\s \cap C_\s\}_{\s \in \cell}$ does {\it not} form 
a monoidal complex. The monoidal complex $\MM$ of  Example~\ref{bMM} below gives a counter example. 
In fact, the condition (3) of Definition~\ref{monoidal complex} is violated. 

\medskip

We consider the following cochain complex 
$$
\upl \cpx I_R: 0 \longto \upl I^{-d}_R \longto \upl I^{-d + 1}_R 
\longto \cdots \longto \upl I^0_R \longto 0
$$
with 
$$\upl I^{-i}_R := \bigoplus_{\substack{\s \in \cell \\ \dim \s = i-1}} \kk[\ZZ \M_\s \cap C_\s].$$
The differential map $\partial$ is given by  
$$
\partial(y) = \sum_{\substack{\dim \t =i-2 \\ \t \leq \s}}\e(\s,\t) \cdot 
\pi_{\t,\s}(y)
$$
for $y \in \kk[\ZZ \M_\s \cap C_\s] \subset I^{-i}_R$, where  $\pi_{\t,\s}$ is the natural surjection 
$\kk[\ZZ \M_\s \cap C_\s] \to \kk[\ZZ \M_\t \cap C_\t]$.
Clearly, $\upl \cpx I_R$ is a cochain complex of finitely generated $R$-modules
.

\begin{thm}\label{main}
If a toric face ring $R=\kk[\MM]$ is seminormal, then $\upl \cpx I_R$ is quasi-isomorphic to the dualizing complex $\cpx D_R$.  
\end{thm}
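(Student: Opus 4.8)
The plan is to imitate the three-step proof of Theorem~\ref{sec:ishida} sketched above, cell by cell, with Proposition~\ref{+I_R} playing the role that normality of the rings $\kk[\M_\s]$ played there. This substitution is legitimate: since $R$ is seminormal, every $\kk[\M_\s]=R/\p_\s$ is seminormal by Proposition~\ref{Ngu}, so Proposition~\ref{+I_R} applies to each of them. Accordingly the goal is to realize $\upl\cpx I_R$ as a subcomplex of $\cpx D_R$ (the analogues of Steps~1--2) and then to show that this inclusion is a quasi-isomorphism (the analogue of Step~3).

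For the embedding, fix $\s\in\cell$ and put $c=c(\s)$. Let $\cpx J_\s=\cpx J_{\kk[\M_\s]}$ be the $\ZZ^{c}$-graded dualizing complex of $\kk[\M_\s]$ and $h_\s\colon\cpx J_\s\to\cpx D_{\kk[\M_\s]}=\Hom_R(\kk[\M_\s],\cpx D_R)\hookrightarrow\cpx D_R$ the chain map of Step~2. By definition of the $C_\s$-graded part, $(\cpx J_\s)_{C_\s}$ is the complex $\upl\cpx I_{\kk[\M_\s]}$: its term in degree $-j$ is $\bigoplus_{\t\le\s,\,c(\t)=j}\kk[\ZZ\M_\t\cap C_\t]$ (via the bijection between the faces of $C_\s$ and the cells $\t\le\s$, together with axiom~(3) of a monoidal complex), with differentials exactly the maps $\e(\s',\t)\,\pi_{\t,\s'}$ appearing in $\upl\cpx I_R$. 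Since $\kk[\M_\s]$ is seminormal, Proposition~\ref{+I_R} says that $(\cpx J_\s)_{C_\s}$ is a subcomplex of $\cpx J_\s$ and the inclusion is a quasi-isomorphism. Identifying, as in Step~1, a submodule of $D_R^{-c}$ isomorphic to the top term $\kk[\ZZ\M_\s\cap C_\s]$ --- necessarily lying in the $\p_\s$-torsion submodule $E(\kk[\M_\s])\subseteq D_R^{-c}$, and pinned down by $\Ext_R^{-c}(\omega,\cpx D_R)\cong\kk[\ZZ\M_\s\cap C_\s]$ where $\omega$ is the canonical module of the normal ring $\kk[\ZZ\M_\s\cap C_\s]$ --- and then checking, as in Step~2 (using that $h_\s$ is a chain map and $(\cpx J_\s)_{C_\s}$ is a subcomplex of $\cpx J_\s$), that these submodules are compatible for $\t\le\s$, I would take the union over the maximal cells $\s$ of the subcomplexes $h_\s\big((\cpx J_\s)_{C_\s}\big)$ and obtain $\upl\cpx I_R$ as a subcomplex of $\cpx D_R$. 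Already here a little more care than in \cite{OY} is needed, since $\kk[\M_\s]$ is only seminormal --- in particular not Cohen--Macaulay --- so $\kk[\ZZ\M_\s\cap C_\s]$ is not a cyclic $R$-module.

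The analogue of Step~3 --- that the inclusion $\upl\cpx I_R\hookrightarrow\cpx D_R$ just constructed is a quasi-isomorphism --- is where I expect the real difficulty, and where, as stressed after the statement of Theorem~\ref{main}, one must go beyond \cite{OY}. One cannot reduce to a normal overring: by Remark~\ref{not subcomplex}, $\upl\cpx I_R$ is not a subcomplex of $\cpx I_{\baR}$ for the normalization $\baR$, so Theorem~\ref{sec:ishida} applied to $\baR$ gives nothing. Instead I would carry out the Step~3 argument of \cite{OY} (the one presented later in the paper, around Theorem~\ref{Phi}) with its cell-wise input replaced by Proposition~\ref{+I_R}. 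The key difference is that in \cite{OY} each $\kk[\M_\s]$ is normal Cohen--Macaulay, so its local cohomology --- hence the relevant local comparison --- is concentrated in a single degree, whereas here the $\kk[\M_\s]$ are merely seminormal and typically not Cohen--Macaulay, and one must propagate the weaker local statement (Proposition~\ref{+I_R}, equivalently the support condition (iii) of Theorem~\ref{RRBLR}) through the patching over the poset $\cell$. Concretely, one can filter $\cpx D_R$ compatibly with the cell poset --- for instance by induction on the number of cells: remove a maximal cell $\s_0$, pass to $\cell'=\cell\setminus\{\s_0\}$ and $R'=\kk[\MM']$ with the surjection $R\twoheadrightarrow R'$ of kernel $(x^a\mid a\in\M_{\s_0}\cap\int C_{\s_0})$, and compare the resulting pieces of $\upl\cpx I_R$ and $\cpx D_R$ cell by cell, using Proposition~\ref{+I_R} for $\kk[\M_{\s_0}]$ and the inductive hypothesis for $R'$. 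Matching the connecting homomorphisms in such a patching, and accounting for the non-monomial primes that occur in $\cpx D_R$ but not in $\upl\cpx I_R$, is exactly the additional effort that Theorem~\ref{main} requires beyond \cite{OY}.
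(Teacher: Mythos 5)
Your plan correctly locates the two places where one must go beyond \cite{OY}, but at both of them it stops exactly where the paper's machinery begins, and the substitutes you sketch do not work as stated. For the Step~1--2 analogue: in \cite{OY} the submodule $i_\s(\kk[\M_\s])\subset D_R^{-c}$ is pinned down intrinsically because $\kk[\M_\s]=R/\p_\s$ is \emph{cyclic}, so $\Hom_R(\kk[\M_\s],D_R^{-c})$ is literally a subset of $D_R^{-c}$ and the kernel of $\Hom_R(\omega_\s,\partial)$ can be read off inside it. For the terms $\kk[\ZZ\M_\s\cap C_\s]$ of $\upl\cpx I_R$ this breaks down: the module is not cyclic over $R$, your computation $\Ext_R^{-c}(\omega,\cpx D_R)\cong\kk[\ZZ\M_\s\cap C_\s]$ identifies a module of homomorphisms rather than a distinguished submodule of $D_R^{-c}$, and you have no $\s$-independent characterization of the images $h_\s\bigl({}^*i_{\s,\t}(\kk[\ZZ\M_\t\cap C_\t])\bigr)$ when $\t$ is a common face of two maximal cells --- which is exactly the gluing obstruction behind Remark~\ref{not subcomplex}. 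The paper does not build $\upl\cpx I_R$ as a subcomplex of $\cpx D_R$ at all. It introduces the cone-wise normalization $\oR=\kk[\bMM]$ with $\tM_\s=\bL_\s\cap C_\s$ (which, unlike $\{\ZZ\M_\s\cap C_\s\}$, \emph{is} a monoidal complex), applies the already-proved Theorem~\ref{sec:ishida} to get $\psi:\cpx I_{\oR}\to\cpx D_{\oR}$, and defines $\phi$ as the composition $\upl\cpx I_R\dashrightarrow\cpx I_{\oR}\to\cpx D_{\oR}\to\cpx D_R$; Lemmas~\ref{commutative diagram} and~\ref{composition} show this composition is a chain map even though the first arrow $\kappa$ is not, because on each cell it collapses to the quasi-isomorphism $\phi_\s$ via $\mu_\s\circ\kappa_\s=\idmap$. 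That detour through $\oR$ is the missing idea in your construction of the map.

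For Step~3, your induction ``remove a maximal cell $\s_0$ and pass to $R'$'' is only a plan: the kernel of $R\twoheadrightarrow R'$ is the interior ideal of $\kk[\M_{\s_0}]$, which is merely seminormal and typically not Cohen--Macaulay, so $\inHom_R(\,\cdot\,,\cpx D_R)$ applied to it is not concentrated in one degree, and the matching of connecting homomorphisms that you defer is precisely where the proof would have to live. The paper's Step~3 is different and self-contained: it works in the category $\Sq R$ of squarefree modules (equivalent to modules over the incidence algebra of $\cell$), observes that the $\kk[\M_\s]$ are its indecomposable injectives, and applies \cite[Proposition~7.1]{Ha} to the natural transformation $\Phi:\inHom_R(-,\upl\cpx I_R)\to\inHom_R(-,\cpx D_R)$ on $\Cb(\InjSq)\simeq\Db(\Sq R)$; checking $\Phi(\kk[\M_\s])$ reduces to the cell-wise quasi-isomorphism $\phi_\s$, and evaluating $\Phi$ at $R\in\Sq R$ yields the theorem. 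To complete your argument you would need either to carry out this functorial d\'evissage or to supply the exact triangles for your cell-removal induction; as written, neither is done.
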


To prove the theorem, we need some preparation.  
For each $\s \in \cell$, set $\tM_\s:= \bL_\s \cap C_\s$. 
Then $\{ \, \tM_\s \, \}_{\s \in \cell}$ is a monoidal complex again. 
We can regard that $|\bMM| := \colimit \tM_\s$ contains $\sM$ as a subset. 

\begin{exmp}\label{bMM}
While $\kk[\tM_\s]$ is always a normal semigroup ring, it is not the normalization of $\kk[\M_\s]$. 
For example, consider the monoidal complex $\MM$  illustrated below.  
Let $\M_\s$ be the monoid corresponding to the first quadrant, then $\kk[\M_\s]=\kk[x^2,y]$ is normal, 
but we have $\kk[\tM_\s] = \kk[x,y] \supsetneq \kk[\M_\s]$. 
\begin{center}
\begingroup  
\setlength\unitlength{8mm} 
\begin{picture}(7,9.5)(0,-0.5)  
\blackcirclefill<.2>(4,3)(0,1) 
\put(0,7){\circle*{.2}}  
\put(1,7){\circle{.2}}  
\put(2,7){\circle*{.2}} 
\put(3,7){\circle{.2}}
\put(4,7){\circle*{.2}} 
\put(0,6){\circle*{.2}}  
\put(1,6){\circle{.2}}  
\put(2,6){\circle*{.2}} 
\put(3,6){\circle{.2}}
\put(4,6){\circle*{.2}} 
\put(0,5){\circle*{.2}}  
\put(1,5){\circle{.2}}  
\put(2,5){\circle*{.2}} 
\put(3,5){\circle{.2}}
\put(4,5){\circle*{.2}} 
\put(0,4){\circle*{.2}}  
\put(1,4){\circle{.2}}  
\put(2,4){\circle*{.2}} 
\put(3,4){\circle{.2}}
\put(4,4){\circle*{.2}} 
\put(0,4){\vector(1,0){5}}  %
\put(0,0){\vector(0,1){8}}  %
\end{picture}
\endgroup
\end{center}
\end{exmp}

Set $\oR:=\kk[\bMM]$. The next result holds, even if $\kk[\MM]$ is not seminormal.  

\begin{lem}\label{oR is fin. gen.}
For any $\MM$, $\oR=\kk[\bMM]$ is a finitely generated module over $R=\kk[\MM]$. 
\end{lem}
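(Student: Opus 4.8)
The plan is to show that $\oR = \kk[\bMM]$ is generated as an $R$-module by finitely many monomials, and the key is that each individual normalization-type extension $\kk[\tM_\s]$ over $\kk[\M_\s]$ is finite, so it suffices to glue these finitely many module generators together in a controlled way. First I would recall that since $\tM_\s = \bL_\s \cap C_\s$ is the normalization of the lattice points of the cone $C_\s$ relative to the possibly-smaller lattice $\ZZ\M_\s$, the extension $\kk[\M_\s] \hookrightarrow \kk[\tM_\s]$ is module-finite by the classical finiteness of normalization of affine semigroup rings (Gordan's lemma); hence for each $\s$ there is a finite subset $S_\s \subset \tM_\s$ with $\kk[\tM_\s] = \sum_{a \in S_\s} \kk[\M_\s]\cdot x^a$. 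Since $\cell$ is finite, $\bigcup_{\s \in \cell} S_\s$ is a finite subset of $|\bMM|$.

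Next I would argue that these finitely many monomials $\{x^a \mid a \in \bigcup_\s S_\s\}$ generate $\oR$ over $R$. Take any $b \in |\bMM|$; then $b \in \tM_\s$ for the unique minimal cell $\s$ with $b \in C_\s$, so $x^b \in \kk[\tM_\s] = \sum_{a \in S_\s} \kk[\M_\s]\cdot x^a$, i.e. $x^b = \sum_{a \in S_\s} r_a x^a$ with $r_a \in \kk[\M_\s]$. The one subtlety is that I must track multiplication inside $\oR$ versus inside $\kk[\tM_\s]$: for $r_a \in \kk[\M_\s] \subseteq R$ and $a \in \tM_\s$, the product $r_a \cdot x^a$ computed in the toric face ring $\oR$ agrees with the product computed in the semigroup ring $\kk[\tM_\s]$, because $\s$ being minimal with $b, a \in C_\s$ means all relevant sums occur within the single cone $C_\s$ where the toric-face multiplication is just the ordinary semigroup addition — and products that vanish in $\oR$ (because a sum of exponents leaves every cone) also vanish in the relevant $\kk[\tM_\s]$-computation after projecting to the face containing $b$. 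I would also note $R \subseteq \oR$ is a ring extension so this is genuinely a statement about $\oR$ as an $R$-module. Thus $x^b$ lies in the $R$-submodule generated by $\{x^a \mid a \in S_\s\} \subseteq \{x^a \mid a \in \bigcup_\t S_\t\}$, and since monomials $\kk$-span $\oR$, we conclude $\oR = \sum_{a \in \bigcup_\s S_\s} R\cdot x^a$ is finitely generated.

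The main obstacle I anticipate is precisely the compatibility of the multiplication: one needs to be careful that expressing $x^b$ inside a single $\kk[\tM_\s]$ and then re-reading the coefficients as elements of $R$ acting on $\oR$ does not introduce spurious terms or drop terms, since the toric face ring multiplication can send a product to $0$ that would be nonzero in an ambient semigroup ring. The clean way around this is to fix the minimal cell $\s$ of $b$ at the outset and observe that every monomial appearing with nonzero coefficient in the identity $x^b = \sum r_a x^a$ lies in $C_\s$ (as $b$ does), so the whole computation takes place in the subspace $\bigoplus_{a \in |\bMM| \cap C_\s} \kk x^a$, on which both multiplications restrict identically. Aside from this bookkeeping, the argument is routine given Gordan's lemma and the finiteness of $\cell$.
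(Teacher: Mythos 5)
Your overall strategy is the same as the paper's: reduce to the statement that each $\kk[\tM_\s]$ is a finitely generated $\kk[\M_\s]$-module, then assemble the finitely many generators over the finite complex $\cell$. Your treatment of the assembly step is more careful than the paper's (which simply says ``it suffices to show\dots''), and it is correct: working inside the single cone $C_\s$ attached to the minimal cell containing $b$, the toric-face multiplication and the semigroup multiplication agree, so no products are spuriously killed. (One cosmetic point: $\kk[\M_\s]$ is a quotient $R/\p_\s$ of $R$, not a subring, so the coefficients $r_a$ must be lifted to $R$; the obvious lift supported on the monomials $x^c$, $c\in\M_\s$, is the one your computation implicitly uses, and it works.)

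The one step whose justification needs repair is the cell-wise finiteness itself. You attribute it to ``the classical finiteness of normalization of affine semigroup rings,'' but $\kk[\tM_\s]=\kk[\bL_\s\cap C_\s]$ is \emph{not} the normalization of $\kk[\M_\s]$: the normalization is $\kk[\ZZ\M_\s\cap C_\s]$, which can be strictly smaller because $\ZZ\M_\s$ may be a proper sublattice of $\bL_\s$. This is precisely the point of Example~\ref{bMM}, where $\kk[\M_\s]=\kk[x^2,y]$ is already normal yet $\kk[\tM_\s]=\kk[x,y]$ is strictly larger, so ``finiteness of normalization'' literally gives nothing there. The statement is nevertheless true, and the fix is short: since $\RR_{\geq 0}\M_\s=C_\s$ is full-dimensional in $\bE_\s$, the lattice $\ZZ\M_\s$ has finite index in $\bL_\s$; hence every $a\in\tM_\s$ has a positive multiple in $\M_\s$, so $\kk[\tM_\s]$ is integral over $\kk[\M_\s]$, and being a finitely generated $\kk$-algebra by Gordan's lemma it is module-finite. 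Equivalently, $\kk[\tM_\s]$ is the integral closure of $\kk[\M_\s]$ in the finite field extension $\operatorname{Frac}\kk[\bL_\s]$ of its fraction field, and one quotes the finiteness of integral closure for affine domains. The paper reaches the same fact by a different route: it first treats the vertices $\t<\s$ with $\dim\t=0$, forms the $\kk[\M_\s]$-subalgebra $A'$ generated by the corresponding $\kk[\tM_\t]$, and identifies $\kk[\tM_\s]$ with the normalization of $A'$. Once the finite-index observation is made explicit, your argument is the more direct of the two.
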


\begin{proof}
It suffices to show that  $\kk[\tM_\s]$ is finitely generated as a $\kk[\M_\s]$-module for each $\s \in \cell$. 
This must be a well-known result, but we give a proof here for the reader's convenience. 
If $\dim \s=0$, then the assertion is clear (in fact, $\kk[\tM_\s]$ is a polynomial ring 
with one variable in this case). 
If $\dim \kk[\M_\s]>1$, set $A:=\kk[\M_\s]$, and let $A'$ be the $A$-subalgebra of $\kk[\tM_\s]$ 
generated by $\{ \, x^a \mid a \in \tM_\t, \t < \s, \dim \t =0 \, \}$. 
By the above remark, $A'$ is a finitely generated  $A$-module. 
Since $\kk[\tM_\s]$ is the normalization of $A'$, it is a finitely generated as an $A'$-module, 
hence also as an $A$-module.   
\end{proof}

We regard $\kk[\tM_\s]$ as an $R$-module by the compositions of the ring homomorphisms 
$R \twoheadrightarrow R/\p_\s (\cong \kk[\M_\s]) \hookrightarrow  \kk[\tM_\s]$, which is 
the same thing as $R \hookrightarrow \oR \twoheadrightarrow \kk[\tM_\s]$.

As in the previous section, we set $c(\s):= \dim \s +1 =\dim \kk[\M_\s]$.  
For the simplicity, the dualizing complexes $\cpx D_{\kk[\M_\s]}$ (resp.  $\cpx D_{\kk[\tM_\s]}$) of 
$\kk[\M_\s]$ (resp. $\kk[\tM_\s]$) is denoted by $\cpx D_\s$ (resp.  $\cpx D_{\ols}$). 
Since both $\kk[\M_\s]$ and $\kk[\tM_\s]$ are $\ZZ^{c(\s)}$-graded, they admit the $\ZZ^{c(\s)}$-graded 
dualizing complexes $\cpx J_\s:=\cpx J_{\kk[\M_\s]}$ and $\cpx J_{\ols}:= \cpx J_{\kk[\tM_\s]}$ respectively. 
Similarly, we also set $\upl \cpx I_\s := \upl \cpx I_{\kk[\M_\s]}$ and  $\cpx I_{\ols} := \cpx I_{\kk[\tM_\s]} (= \upl \cpx I_{\kk[\tM_\s]})$ 
for the simplicity.

Since $\oR$ is cone-wise normal, $\cpx I_{\oR}$ is quasi-isomorphic to $\cpx D_{\oR}$ 
by Theorem~\ref{sec:ishida}. Moreover, we have the following.

\begin{lem}\label{psi_s}
There is a quasi-isomorphism $\psi: \cpx I_{\oR} \to \cpx D_{\oR}$ such that the 
induced map $\psi_\s := \inHom_{\oR}(\kk[\tM_\s], \psi) : \cpx I_{\ols} \to 
\cpx D_{\ols}$ is a quasi-isomorphism for all $\s \in \cell$.  
\end{lem}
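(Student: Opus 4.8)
The plan is to build the quasi-isomorphism $\psi : \cpx I_{\oR} \to \cpx D_{\oR}$ essentially as the one produced in the proof of Theorem~\ref{sec:ishida}, applied to the cone-wise normal ring $\oR = \kk[\bMM]$, and then to verify that the particular construction used there is compatible with the functor $\inHom_{\oR}(\kk[\tM_\s], -)$ for each $\s$. Recall from the outline of Theorem~\ref{sec:ishida} that $\cpx I_{\oR}$ is realized inside $\cpx D_{\oR}$ as the subcomplex $\bigoplus_{\s \in \cell} i_\s(\kk[\tM_\s])$, and the desired quasi-isomorphism $\psi$ is simply the inclusion of this subcomplex (Step~3 says this inclusion is a quasi-isomorphism). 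So the real content of the lemma is not the existence of \emph{some} quasi-isomorphism, but that \emph{this} inclusion survives after applying $\inHom_{\oR}(\kk[\tM_\s], -)$, i.e.\ that $\inHom_{\oR}(\kk[\tM_\s], \bigoplus_{\t} i_\t(\kk[\tM_\t])) \hookrightarrow \inHom_{\oR}(\kk[\tM_\s], \cpx D_{\oR}) = \cpx D_{\ols}$ is a quasi-isomorphism onto a complex identifiable with $\cpx I_{\ols}$.

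First I would identify $\inHom_{\oR}(\kk[\tM_\s], \cpx D_{\oR})$ with $\cpx D_{\ols}$, the dualizing complex of $\kk[\tM_\s] = \oR/\p_\s$; this is the standard fact that $\inHom$ of the dualizing complex against a residue ring gives the dualizing complex of that ring, and it is already used implicitly in Step~2 of the outline. Next, I would compute the subcomplex: since $i_\t(\kk[\tM_\t]) \subset D_{\oR}^{-c(\t)}$ is by construction the set of elements killed by $\p_\t$ whose image under $\partial$ of the $\q_\t$-multiples vanishes, and since $\kk[\tM_\t]$ is annihilated by $\p_\s$ precisely when $\t \leq \s$ (equivalently $\p_\s \subseteq \p_\t$), applying $\inHom_{\oR}(\kk[\tM_\s], -)$ picks out exactly $\bigoplus_{\t \leq \s} i_\t(\kk[\tM_\t])$. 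Under the identification of the ambient complex with $\cpx D_{\ols}$, this subcomplex becomes the copy of $\cpx I_{\ols} = \bigoplus_{\t \leq \s} i^{\ols}_\t(\kk[\tM_\t])$ sitting inside $\cpx D_{\ols}$ — here one must check that the two constructions of the $i$-maps (one relative to $\oR$, one relative to $\kk[\tM_\s]$) agree, which is exactly the kind of localization/restriction compatibility verified in Step~2 of the outline via the maps $h_\s$ and ${}^*i_{\s,\t}$. Then Step~3 of Theorem~\ref{sec:ishida}, applied to the cone-wise normal ring $\kk[\tM_\s]$ in place of $R$, tells us that the inclusion $\cpx I_{\ols} \hookrightarrow \cpx D_{\ols}$ is a quasi-isomorphism, which is $\psi_\s$.

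The main obstacle I expect is the bookkeeping in the previous paragraph: namely, making precise that the \emph{single} chosen quasi-isomorphism $\psi$ (the inclusion of $\bigoplus_\s i_\s(\kk[\tM_\s])$ into $\cpx D_{\oR}$) restricts correctly under $\inHom_{\oR}(\kk[\tM_\s], -)$ \emph{simultaneously for all} $\s$, rather than choosing a different quasi-isomorphism for each $\s$. Because the subcomplex structure is intrinsic — it is literally a direct-sum decomposition of a subcomplex of $\cpx D_{\oR}$ indexed by cells — the compatibility is really a statement about how $\p_\s$-torsion interacts with this decomposition, and the only subtlety is the constant-multiple ambiguity in each $i_\t$ noted in Step~1 of the outline, which as observed there does not affect any of the arguments. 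So the proof amounts to: (1) recall $\psi$ as the subcomplex inclusion from Theorem~\ref{sec:ishida} applied to $\oR$; (2) observe $\inHom_{\oR}(\kk[\tM_\s], \cpx D_{\oR}) = \cpx D_{\ols}$ and that $\inHom_{\oR}(\kk[\tM_\s], -)$ extracts the summands indexed by $\t \leq \s$; (3) identify that extracted subcomplex with $\cpx I_{\ols} \subset \cpx D_{\ols}$; and (4) invoke Step~3 of Theorem~\ref{sec:ishida} for $\kk[\tM_\s]$ to conclude each $\psi_\s$ is a quasi-isomorphism.
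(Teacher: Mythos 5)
Your proposal is correct and follows essentially the same route as the paper, which simply observes that the statement is already contained in the proof of \cite[Theorem~5.2]{OY} as summarized in the three-step outline: $\psi$ is the inclusion of the subcomplex $\bigoplus_{\s} i_\s(\kk[\tM_\s])$ into $\cpx D_{\oR}$, and applying $\inHom_{\oR}(\kk[\tM_\s],-)$ extracts the summands indexed by $\t \leq \s$, recovering $\cpx I_{\ols} \hookrightarrow \cpx D_{\ols}$. Your fleshing-out of the extraction step and of the compatibility of the two $i$-map constructions is exactly the content the paper leaves implicit in its citation.
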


\begin{proof}
This fact has been shown in the proof of \cite[Theorem~5.2]{OY}(Theorem~\ref{sec:ishida} 
of the present paper). Recall the outline of the proof introduced in the previous section. 
\end{proof}

Since $\oR$ is finitely generated as an $R$-module by Lemma~\ref{oR is fin. gen.}, 
we have $\DC_{\oR} = \inHom_R(\oR, \DC_R)$. 
Via the canonical injection $R \hookrightarrow \oR$, we have a chain map 
$$\lambda :\DC_{\oR} = \inHom_R(\oR, \DC_R) \longrightarrow \inHom_R(R,\DC_R)=\DC_R.$$ 
Similarly, for each $\s$, the injection $\kk[\M_\s] \hookrightarrow \kk[\tM_\s]$ 
induces a chain map $\lambda_\s : \cpx D_{\ols} \to \cpx D_{\s}$. 
Since $\kk[\tM_\s]$ is a finitely generated  $\ZZ^{c(\s)}$-graded module over 
$\kk[\M_\s]$ and $\cpx J_\s$ is the dualizing complex in the $\ZZ^{c(\s)}$-graded context, we have 
  $\Hom_{\kk[\M_\s]}^\bullet(\kk[\tM_\s], \cpx J_\s)=\cpx J_{\ols}.$ 
The injection $\kk[\M_\s] \hookrightarrow \kk[\tM_\s]$ induces the $\ZZ^{c(\s)}$-graded chain map
$\mu_\s' : \cpx J_{\ols} \too \cpx J_\s.$

Note that  $\M_\s$ and $\tM_\s$ span the same polyhedral cone $C_\s$.  
Since $\kk[\M_\s]$ is seminormal and $\kk[\tM_\s]$ is normal, we have $\cpx J_\s \cong (\cpx J_\s)_{C_\s} 
=\upl \cpx I_\s$ and  $\cpx J_{\ols} \cong (\cpx J_{\ols})_{C_\s} = \upl \cpx I_{\ols}=\cpx I_{\ols}$ 
as shown in the proof of Theorem~\ref{+I_R}. Taking the $C_\s$-graded part of $\mu'_\s$, we have the chain map 
$$\mu_\s : \cpx I_{\ols} \too \upl \cpx I_\s.$$

\begin{lem}\label{commutative diagram}
For the quasi-isomorphism $\psi_\s : \cpx  I_{\ols} \to \cpx D_{\ols}$ of Lemma~\ref{psi_s}, 
we have a quasi-isomorphism $\phi_\s :  \upl\cpx I_\s \to \cpx D_\s$ 
which makes the following diagram commutative.
$$
\xymatrix{
I^\bullet_{\ols} \ar[r]^{\psi_\sigma} \ar[d]_{\mu_\sigma} 
& D^\bullet_{\ols} \ar[d]^{\lambda_\sigma} \\
\upl I^\bullet_{\s} \ar[r]_{\phi_\sigma} & D^\bullet_\s
}
$$ 
\end{lem}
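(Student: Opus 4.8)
The plan is to read off $\phi_\s$ from the picture of $\ZZ^{c(\s)}$-graded dualizing complexes, in which every map is governed by the single ring inclusion $\kk[\M_\s]\hookrightarrow\kk[\tM_\s]$. Since $\kk[\M_\s]$ is seminormal (Proposition~\ref{Ngu}) and $\kk[\tM_\s]$ is normal, the argument in the proof of Proposition~\ref{+I_R} shows that the inclusions of the $C_\s$-graded parts
$$
j_\s : \upl\cpx I_\s = (\cpx J_\s)_{C_\s} \hookrightarrow \cpx J_\s,
\qquad
j_{\ols} : \cpx I_{\ols} = (\cpx J_{\ols})_{C_\s} \hookrightarrow \cpx J_{\ols}
$$
are quasi-isomorphisms, and by construction $\mu_\s$ is the $C_\s$-graded part of $\mu_\s' : \cpx J_{\ols}\to\cpx J_\s$, so $\mu_\s'\circ j_{\ols} = j_\s\circ\mu_\s$ on the nose.

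Next I would fix a comparison quasi-isomorphism $g_\s : \cpx J_\s \to \cpx D_\s$ from the graded to the ungraded dualizing complex of $\kk[\M_\s]$. As $\kk[\tM_\s]$ is module-finite over $\kk[\M_\s]$ (Lemma~\ref{oR is fin. gen.}), the adjunction that yields $\cpx J_{\ols} = \inHom_{\kk[\M_\s]}(\kk[\tM_\s],\cpx J_\s)$ also yields $\cpx D_{\ols} = \inHom_{\kk[\M_\s]}(\kk[\tM_\s],\cpx D_\s)$, so I may set $g_{\ols} := \inHom_{\kk[\M_\s]}(\kk[\tM_\s], g_\s)$, again a quasi-isomorphism. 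Applying the bifunctor $\inHom_{\kk[\M_\s]}(-,-)$ to the pair consisting of the inclusion $\kk[\M_\s]\hookrightarrow\kk[\tM_\s]$ and the morphism $g_\s$, one sees at once that $\lambda_\s\circ g_{\ols} = g_\s\circ\mu_\s'$. It remains to observe that, under these identifications, the quasi-isomorphism $\psi_\s$ of Lemma~\ref{psi_s} equals $g_{\ols}\circ j_{\ols}$: the map $\psi$ of Lemma~\ref{psi_s} is the one constructed in the proof of Theorem~\ref{sec:ishida} applied to the cone-wise normal ring $\oR$, which realizes $\cpx I_{\oR}$ inside $\cpx D_{\oR}$ by composing, cell by cell, the inclusion of a $C_\s$-graded part with the comparison $\cpx J_{\ols}\to\cpx D_{\ols}$ (Steps~1 and~2 of the outline of the proof of Theorem~\ref{sec:ishida}, in particular \eqref{relayer}), and applying $\inHom_{\oR}(\kk[\tM_\s],-)$ to $\psi$ returns exactly $g_{\ols}\circ j_{\ols}$. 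Granting this, set
$$
\phi_\s := g_\s\circ j_\s : \upl\cpx I_\s \longto \cpx D_\s,
$$
a quasi-isomorphism since $g_\s$ and $j_\s$ are, and the required square commutes:
$$
\phi_\s\circ\mu_\s = g_\s\circ j_\s\circ\mu_\s = g_\s\circ\mu_\s'\circ j_{\ols}
= \lambda_\s\circ g_{\ols}\circ j_{\ols} = \lambda_\s\circ\psi_\s.
$$

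The step I expect to be the main obstacle is the identification $\psi_\s = g_{\ols}\circ j_{\ols}$: the comparison maps between graded and ungraded dualizing complexes, and the subcomplex realization used in \cite{OY}, are canonical only up to a nonzero scalar in each cohomological degree (cf.\ Step~1 of that outline), so one must make sure the choices in the proof of Lemma~\ref{psi_s} can be arranged to match $g_{\ols}$ and that the construction is natural in the inclusion $\kk[\M_\s]\hookrightarrow\kk[\tM_\s]$. If one prefers to avoid unwinding the argument of \cite{OY}, there is a more concrete alternative: the map $\mu_\s$ is surjective in each degree, because $\mu_\s'$, being obtained from $\kk[\M_\s]\hookrightarrow\kk[\tM_\s]$ by applying $\inHom_{\kk[\M_\s]}(-,\cpx J_\s)$, is degreewise surjective ($\cpx J_\s$ being a complex of graded-injective modules) and passing to the $C_\s$-graded part preserves surjectivity; hence, once one checks that $\lambda_\s\circ\psi_\s$ annihilates $\ker\mu_\s$, the equation $\phi_\s\circ\mu_\s = \lambda_\s\circ\psi_\s$ determines $\phi_\s$ uniquely and it is automatically a chain map. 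That $\phi_\s$ is a quasi-isomorphism then reduces, since $\upl\cpx I_\s$ and $\cpx D_\s$ both represent the dualizing complex of the $*$-local ring $\kk[\M_\s]$, to checking that $\phi_\s$ induces an isomorphism on the bottom cohomology $H^{-c(\s)}$, i.e.\ on canonical modules, where the explicit shapes of $\omega_{\kk[\M_\s]}$ and $\omega_{\kk[\tM_\s]}$ dictated by seminormality and normality make the verification routine.
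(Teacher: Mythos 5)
Your skeleton is essentially the paper's: pass from the truncated complexes to the full $\ZZ^{c(\s)}$-graded dualizing complexes (the inclusions $j_\s$, $j_{\ols}$ of the $C_\s$-graded parts being quasi-isomorphisms by seminormality resp.\ normality), choose a comparison quasi-isomorphism $g_\s:\cpx J_\s\to\cpx D_\s$, and use bifunctoriality of $\inHom_{\kk[\M_\s]}(-,-)$ applied to $\kk[\M_\s]\hookrightarrow\kk[\tM_\s]$ to get $\lambda_\s\circ g_{\ols}=g_\s\circ\mu'_\s$. But the step you yourself flag as ``the main obstacle'' --- that the $\psi_\s$ handed to you by Lemma~\ref{psi_s} equals $g_{\ols}\circ j_{\ols}$ for a suitable choice of $g_\s$ --- is precisely the content that still needs proof, and neither of your two suggestions supplies it. The paper closes it with a rigidity argument you do not invoke: both $g_{\ols}$ and the extension $\psi'_\s$ of $\psi_\s$ to $\cpx J_{\ols}$ are $\kk[\tM_\s]$-linear quasi-isomorphisms landing in $\cpx D_{\ols}$, hence extend uniquely to $\kk[\tM_\s]$-linear chain endomorphisms of $\cpx D_{\ols}$; since $\Hom_{\Db(\Mod \kk[\tM_\s])}(\cpx D_{\ols},\cpx D_{\ols})=\kk[\tM_\s]$ and $\cpx D_{\ols}$ is a bounded complex of injectives in the normal form \eqref{normal form}, such an endomorphism is not merely homotopic but \emph{equal} to multiplication by a nonzero scalar. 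In particular the ambiguity is one global constant, not (as you write) an independent scalar in each cohomological degree; so $\psi'_\s=c'\,g_{\ols}$ for some $0\ne c'\in\kk$, and $\phi_\s:=c'\,g_\s\circ j_\s$ makes your computation go through verbatim. No ``naturality of the construction in \cite{OY}'' needs to be checked.

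Your fallback route is likewise incomplete as stated: $\mu_\s$ is indeed degreewise surjective, but the claim that $\lambda_\s\circ\psi_\s$ annihilates $\ker\mu_\s$ is not a routine verification --- it would require computing $\psi_\s$ explicitly on the complement of $\kk[\ZZ\M_\t\cap C_\t]$ inside each $\kk[\tM_\t]$, which is the same difficulty in different clothing; and even granting it, you would still need to check that the induced $\phi_\s$ is the \emph{restriction} of a map of the ambient graded dualizing complexes before the quasi-isomorphism claim reduces to canonical modules. The scalar-rigidity argument above is the one genuinely missing ingredient; with it inserted, your first route is the paper's proof.
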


\begin{proof}
It is easy to see that there exists a quasi-isomorphism $\psi_\s':  \cpx  J_{\ols} \to \cpx D_{\ols}$  
which is an extension of  $ \psi_\s: \cpx I_{\ols} \to  D^\bullet_{\ols}$.  
Since $\mu_\s ': \cpx I_{\ols}  \too \upl \cpx I_\s$ is the restriction of $\mu_\s': \cpx J_{\ols}  \too \cpx J_\s$, it suffices to construct 
a quasi-isomorphism $\phi'_\s :  \cpx J_\s \to \cpx D_\s$ with 
$$
\xymatrix{
\cpx J_{\ols} \ar[r]^{\psi'_\sigma} \ar[d]_{\mu'_\sigma} 
& D^\bullet_{\ols} \ar[d]^{\lambda_\sigma} \\
\cpx  J_\s \ar[r]_{\phi'_\sigma} & D^\bullet_\s. 
}
$$ 
In fact, the restriction of $\phi'_\s$ to $\upl I^\bullet_{\s}$ gives  $\phi_\s$ satisfying the expected condition. 

Since $\cpx J_\s \cong D^\bullet_\s$ in $\Db(\Mod \kk[\M_\s])$, we have 
a quasi-isomorphism $\xi : \cpx J_\s \to \cpx D_\s$. 
Taking $\Hom_{\kk[\M_\s]}(\kk[\tM_\s], -)$, we get  a chain map  
$$\xi_*: \cpx J_{\ols} = \Hom_{\kk[\M_\s]}(\kk[\tM_\s], \cpx J_\s) \longrightarrow 
\Hom_{\kk[\M_\s]}(\kk[\tM_\s], \cpx D_\s) = \cpx D_{\ols}.$$
Note that $\cpx J_\s$ is a cochain complex of injective objects in the category $\*Mod (\kk[\M_\s])$ 
of $\ZZ^{c(\s)}$-graded $\kk[\M_\s]$ modules, and $\kk[\tM_\s] \in \*Mod (\kk[\M_\s])$.  
Hence $\xi_*$ is a quasi-isomorphism. 

Clearly, $\xi_*$ is $\kk[\tM_\s]$-linear, and can be extended to a $\kk[\tM_\s]$-linear automorphism 
$\overline{\xi}_*$ of $\cpx D_{\ols}$ uniquely (of course, the same is true for $\psi_\s'$).  
Since  $$\Hom_{\Db(\Mod \kk[\tM_\s])}(\cpx D_{\ols}, 
\cpx D_{\ols}) = \kk[\tM_\s]$$ and $\cpx D_{\ols}$ is a cochain complex of injective modules, the automorphism 
$\overline{\xi}_*$ is homotopic to the multiplication by $c$ for some $0 \ne c \in \kk$. 
Moreover, since $\cpx D_{\ols}$ is of the form \eqref{normal form},  $\overline{\xi}_*$ is {\it equal}
to the multiplication by $c$. 
Since the same is true for  $\psi'_\s$, we have $\psi'_\s = c' \, \xi_*$ for some $0 \ne c' \in \kk$. 
Hence $\phi_\s' := c' \, \xi$ satisfies the desired condition. 
\end{proof}

For each $i \in \ZZ$,  $\upl I^i_R$ is an $R$-submodule of $I^i_{\oR}$. 
However $\upl \cpx I_R$ is {\it not} a subcomplex of $\cpx I_{\oR}$.  
This problem occurs even in the semigroup ring case. See Remark~\ref{not subcomplex}.

Let $\kappa:  \upl \cpx I_R \dashrightarrow \cpx I_{\oR}$ be 
the collection of the natural injections  $\upl I^i_R \hookrightarrow I^i_{\oR}$
(since this is not a chain map, we use the 
symbol ``$\dashrightarrow$"). The similar map $\kappa_\s: \upl \cpx I_\s 
\dashrightarrow \cpx I_{\ols}$ is not a chain map in general again.  
For each $i$, $\upl I^i_\s$ is a direct summand of $I_{\ols}^i$ as a $\kk[\M_\s]$-module,    
the $i$-th component $\mu^i_\s: I_{\ols}^i \too \upl I_\s^i$ of the chain map 
$\mu_\s:  \cpx I_{\ols} \too \upl \cpx I_\s$ satisfies  
$\mu_\s^i \circ \kappa^i_\s = \operatorname{Id}$. 

\begin{lem}\label{composition}
The composition 
$\upl \cpx I_R \stackrel{\kappa}{\dashrightarrow} \cpx I_{\oR} \stackrel{\psi}{\longrightarrow} \cpx D_{\oR} 
\stackrel{\lambda}{\longrightarrow} \cpx D_R$ 
is a chain map. 
\end{lem}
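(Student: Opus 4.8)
The plan is to verify that the composite $\lambda\circ\psi\circ\kappa$ commutes with differentials by checking it componentwise, cell by cell, and reducing to the already-established commutativity in Lemma~\ref{commutative diagram}. The point is that although $\kappa$ is not a chain map, the failure of $\kappa$ to commute with $\partial$ is ``localized'' on each graded piece, and the maps $\psi$ and $\lambda$ respect the cell decomposition well enough that this failure is killed after composing. Concretely, I would first recall that $\upl I^{-i}_R=\bigoplus_{\dim\s=i-1}\upl I^{-i}_\s$ where $\upl I^{-i}_\s=\kk[\ZZ\M_\s\cap C_\s]$, and similarly $I^{-i}_{\oR}=\bigoplus_{\dim\s=i-1}I^{-i}_{\ols}$ with $I^{-i}_{\ols}=\kk[\tM_\s]$; the injection $\kappa^{-i}$ is the direct sum of the inclusions $\kappa^{-i}_\s\colon\upl I^{-i}_\s\hookrightarrow I^{-i}_{\ols}$.

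Next I would fix a cell $\s$ with $c(\s)=i$ and an element $y\in\upl I^{-i}_\s\subset\upl I^{-i}_R$, and compare $\partial_{\cpx D_R}(\lambda\psi\kappa(y))$ with $\lambda\psi\kappa(\partial_{\upl\cpx I_R}(y))$. The left-hand side equals $\lambda_\t\psi_\s(\kappa_\s(y))$ summed (with signs $\e(\s,\t)$) over facets $\t\le\s$ with $\dim\t=i-2$ — here I use that $\psi$ and $\lambda$ are genuine chain maps and that $\psi(\kappa_\s(y))=\psi_\s(\kappa_\s(y))\in D^{-i}_{\ols}\subset D^{-i}_{\oR}$, so $\partial_{\cpx D_{\oR}}$ applied to it is governed by the Čech-type differential of $\cpx D_{\oR}$, whose $(\s,\t)$-component is (up to sign) $\lambda$-compatibly the map $\cpx D_{\ols}\to\cpx D_{\t}$. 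For the right-hand side, $\partial_{\upl\cpx I_R}(y)=\sum_{\t}\e(\s,\t)\,\pi_{\t,\s}(y)$ with $\pi_{\t,\s}(y)\in\upl I^{-i+1}_\t$, and then $\lambda\psi\kappa$ is applied to each summand via the cell $\t$. So the identity I must check, for each fixed pair $\t\le\s$, is
$$
\lambda_\t\bigl(\psi_\s(\kappa_\s(y))\bigr)^{(\t)} = \lambda_\t\psi_\t\kappa_\t\bigl(\pi_{\t,\s}(y)\bigr),
$$
where $(-)^{(\t)}$ denotes the $\cpx D_\t$-component of the differential of $\cpx D_{\oR}$ restricted to the summand indexed by $\s$. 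Using Lemma~\ref{commutative diagram} for the cell $\s$ (which gives $\lambda_\s\psi_\s=\phi_\s\mu_\s$ on $\cpx I_{\ols}$) together with the fact that $\mu_\s\circ\kappa_\s=\mathrm{Id}$ componentwise, the left-hand side becomes $\lambda_\t$ applied to the $\t$-component of $\partial_{\cpx D_\s}(\lambda_\s\psi_\s\kappa_\s(y))=\partial_{\cpx D_\s}(\phi_\s(y))$, and since $\phi_\s\colon\upl\cpx I_\s\to\cpx D_\s$ is a genuine chain map this equals (the $\t$-component of) $\phi_\s(\partial_{\upl\cpx I_\s}(y))$. One then matches this against the right-hand side using the naturality square of Lemma~\ref{commutative diagram} at $\t$ and the compatibility of the surjections $\pi_{\t,\s}$ with $\mu$ and $\kappa$.

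The main obstacle I anticipate is bookkeeping the interaction between the Čech/Ishida differential on $\cpx D_{\oR}$ (and on $\cpx D_R$) with the ``truncated'' differential of $\upl\cpx I_R$, precisely because $\kappa$ is not a chain map: the discrepancy $\partial\kappa-\kappa\partial$ lands in the complement of $\upl I^{-i+1}_\t$ inside $I^{-i+1}_{\ols}$ (the phenomenon flagged in Remark~\ref{not subcomplex}), and I must argue that $\lambda_\t\psi_\t$ annihilates exactly that complementary part, or equivalently that $\lambda_\t\psi_\t$ factors through $\mu_\t$. This is where Lemma~\ref{commutative diagram} is doing the real work — it says $\lambda_\t\psi_\t=\phi_\t\mu_\t$, and since $\mu_\t$ kills the complement of $\upl I_\t^\bullet$ in $I_{\ols}^\bullet$ (being the projection onto a direct summand, componentwise), the unwanted terms disappear. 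I would organize the proof so that this factorization is invoked uniformly for every cell, reducing the whole statement to the commutativity of $\phi$ with differentials, which is already known. The remaining checks — compatibility of incidence signs $\e(\s,\t)$, and the identification of the $(\s,\t)$-component of $\partial_{\cpx D_{\oR}}$ with $\lambda$-twisted restriction maps — are routine given the explicit form of the dualizing complexes recalled in the previous sections.
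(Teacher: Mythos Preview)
Your core idea is correct and matches the paper's: restrict to an element $y$ in a single summand $\kk[\ZZ\M_\s\cap C_\s]\subset\upl I^{-c(\s)}_R$, use the commutative square relating the global maps $(\kappa,\psi,\lambda)$ to the cell-level maps $(\kappa_\s,\psi_\s,\lambda_\s)$, and then invoke Lemma~\ref{commutative diagram} together with $\mu_\s\circ\kappa_\s=\operatorname{Id}$ to identify $\lambda_\s\psi_\s\kappa_\s$ with $\phi_\s$. Since $\phi_\s$ is a chain map and the inclusion $\cpx D_\s=\Hom_R(\kk[\M_\s],\cpx D_R)\hookrightarrow\cpx D_R$ is a chain map, the conclusion follows immediately. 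This is exactly the paper's argument.

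Where your write-up goes astray is the attempt to decompose $\partial_{\cpx D_{\oR}}$ (and $\partial_{\cpx D_R}$) into ``$(\s,\t)$-components'' as though it were a \v{C}ech-type differential. The dualizing complex $\cpx D_{\oR}$ has the form \eqref{normal form}, indexed by \emph{all} prime ideals of $\oR$, not just monomial ones, and its differential admits no such cellwise description. This facet-by-facet bookkeeping is both incorrect as stated and unnecessary: the point is simply that $\lambda\psi\kappa(y)$ lands in the \emph{subcomplex} $\cpx D_\s\subset\cpx D_R$, so both sides of the chain-map identity live in $\cpx D_\s$ and the verification reduces to $\phi_\s$ being a chain map there --- no facet decomposition of $\cpx D_R$ is ever invoked. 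Your final paragraph (observing that $\lambda\psi$ factors through $\mu$ and hence kills the discrepancy $\partial\kappa-\kappa\partial$) is a correct and clean reformulation of this same reduction; you could have led with it and dropped the facet analysis entirely.
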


\begin{proof}
It suffice to check that 
$$\partial^{i+1}_{\cpx D_R}\circ (\lambda^i \circ \psi^i \circ \kappa^i)(y)=
(\lambda^{i+1} \circ \psi^{i+1} \circ \kappa^{i+1}) \circ \partial^i_{ \upl \cpx I_R}(y)$$
for all ``homogeneous" element $y$ (i.e., $y \in (\upl I_R^i)_a$ for some $a \in \sM$), 
since any element of $\upl I_R^i$ is a sum of these elements.  
Then we can regard $y \in \upl I^i_{\s}$ for some $\s \in \cell$. 
We have the following commutative diagram.
$$
\xymatrix{
\upl I^i_\s \ar[r]^{\kappa^i_\sigma} \ar@{^(->}[d] & I^i_{\ols} \ar[r]^{\psi^i_\sigma} \ar@{^(->}[d] &
D^i_{\ols} \ar[r]^{\lambda^i_\sigma} \ar@{^(->}[d] & D^i_\s  \ar@{^(->}[d]\\
\upl I^i_R \ar[r]_{\kappa^i} & I^i_{\oR} \ar[r]_{\psi^i} & D^i_{\oR} \ar[r]_{\lambda^i} & D^i_R
}
$$
The commutativity of the left square is clear, that of the middle one is Lemma~\ref{psi_s}, and 
that of the right one follows from the fact that the composition $R \hookrightarrow \oR \twoheadrightarrow \kk[\tM_\s]$ 
coincides with the composition $R \twoheadrightarrow \kk[\M_\s] \hookrightarrow \kk[\tM_\s]$. 

By Lemma~\ref{commutative diagram}, we have $\lambda^i_\s \circ \psi^i_\s \circ \kappa^i_\s 
= \phi_\s^i \circ \mu^i_\s \circ \kappa^i_\s = \phi_\s^i$. 
Since $\phi_\s$ is a chain map, we are done. 
\end{proof}

Let $\phi$ denote the chain map $\cpx J_R \to \cpx D_R$ constructed in Lemma~\ref{composition}. 
To prove Theorem~\ref{main}, we will show that $\phi$
is a quasi-isomorphism by a slightly indirect way.

\begin{dfn}
Let $R = \fring \MM$ be a toric face ring. 
We say an $R$-module $M$ is {\it $|\bMM|$-graded} if the following are satisfied;
  \begin{itemize}
  \item[(i)] $M = \bigoplus_{a \in |\bMM|}M_a$ as $\kk$-vector spaces;
  \item[(ii)] Let $a \in \sM$ and $b \in |\bMM|$. If $a+b$ exists (equivalently, $a, b \in \tM_\s$ for some $\s \in \cell$),  
then $x^a \, M_b \subset M_{a + b}$.  Otherwise, $x^a \, M_b = 0$.
  \end{itemize}
Let $\Mbgr R$ denote the subcategory of $\Mod R$ whose objects are $|\bMM|$-graded
and homomorphisms are $f:M \to N$ with $f(M_a) \subset N_a$ for all $a \in |\bMM|$. 

We say $M \in \Mbgr R$ is {\it $\sM$-graded}, if $M = \bigoplus_{a \in \sM}M_a$. 
Let $\Mgr R$ denote the subcategory of $\Mbgr R$ consisting of  $\sM$-graded modules. 
\end{dfn}

Clearly, $\Mbgr R$ and $\Mgr R$ are abelian categories. 
It is easy to see that $R \in \Mgr R$ and $\oR \in \Mbgr R$. 
Moreover, $\cpx I_R$ (resp. $\upl \cpx I_R$) is a cochain complex in $\Mgr R$ (resp. $\Mbgr R$).

\begin{dfn}\label{Sq}
For each $a \in |\bMM|$, there is a unique cell  
$\s \in \cell$ with $a \in \int(C_\s)$ (equivalently, $a \in \tM_\s$ and $\s$ is the minimal 
one with this property). 
This cell $\s$ is denoted by $\supp(a)$. 

An $R$-module $M \in \Mod R$ is said to be \term{squarefree} if it is $\sM$-graded 
({\it not} $|\bMM|$-graded), finitely generated,  and the multiplication map $M_a \ni  y \longmapsto x^b y \in M_{a+b}$  
is bijective for all $a,b\in \sM$ with $\supp(a) \supset \supp(b)$.
\end{dfn}

For example, $\kk[\M_\s]$ and $R$ itself are squarefree $R$-modules.  
In \cite{OY}, squarefree modules over a cone-wise normal toric face ring play a key role. 
Many properties are lost in the non-normal case. 
For example, $\upl \cpx I_R$ is no longer a complex of squarefree modules. 
In fact, $\upl I_R^i$ is $|\bMM|$-graded, not $\sM$-graded. 
However,  the next result still holds. 

\begin{lem}[{c.f. \cite[Lemma~4.2]{OY}}]\label{Sq lem}
Let $\Sq R$ be the full subcategory of $\Mgr R$ consisting of squarefree modules. 
Then $\Sq R$ is an abelian category with enough injectives, 
and indecomposable injectives are objects isomorphic to $\kk[\M_\s]$ for some $\s \in \cell$.  
The injective dimension of any object is at most $d$.
\end{lem}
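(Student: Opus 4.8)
The plan is to show that $\Sq R$ does not really depend on the semigroups $\M_\s$ --- only on the face poset $\cell$ --- and then to deduce the three assertions from the cone-wise normal case \cite[Lemma~4.2]{OY}. Precisely, I would prove that $\Sq R$ is equivalent to the category $\mathcal F$ of functors from the poset $\cell$ to finite-dimensional $\kk$-vector spaces (equivalently, finitely generated modules over the incidence algebra of $\cell$). Since the cone-wise normal ring $\oR=\kk[\bMM]$ is supported on the same $\cell$, the same description gives $\Sq R\simeq\mathcal F\simeq\Sq\oR$, and \cite[Lemma~4.2]{OY} applied to $\oR$ then yields everything; equivalently, the argument of \cite[Lemma~4.2]{OY} goes through verbatim once $\Sq R$ is read through $\mathcal F$, since normality is never used.

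To build the equivalence, I would send a squarefree module $M$ to the functor $F_M$ with $F_M(\s):=M_a$ for any $a\in\sM$ with $\supp(a)=\s$; this is well defined up to canonical isomorphism because, for $\supp(a)=\supp(a')=\s$, the multiplications $M_a\to M_{a+a'}\leftarrow M_{a'}$ are bijective by the squarefree condition. For $\t\le\s$ the structure map $F_M(\t)\to F_M(\s)$ is $x^a\cdot(-)\colon M_b\to M_{a+b}$ with $\supp(b)=\t$ and $\supp(a)=\s$, using that $\supp(a+b)=\s$ because the relative interior of a face of $C_\s$ absorbs the whole cone. Conversely, a functor $F$ is sent to $\bigoplus_{a\in\sM}F(\supp a)$, with $x^c$ acting on the summand $F(\supp a)$ as $F\bigl(\supp a\le\supp(a+c)\bigr)$ when $a+c$ exists --- note $\supp(a)\le\supp(a+c)$ always, as the smallest face of the ambient cone containing two relative-interior points is the join of the two faces --- and as $0$ otherwise. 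One checks that these two constructions are mutually quasi-inverse. Under the equivalence $R/\p_\s=\kk[\M_\s]$ goes to the functor $I_\s$ with $I_\s(\t)=\kk$ for $\t\le\s$ and $0$ otherwise, all nonzero structure maps being identities.

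Granting this, the three claims are standard facts about $\mathcal F$ over the finite poset $\cell$. Kernels and cokernels are computed cellwise, so $\mathcal F$ is abelian, hence $\Sq R$ is a full abelian subcategory of $\Mgr R$; it is moreover Noetherian and Artinian, so closed under subobjects and quotients. The $I_\s$ are injective --- they arise as right Kan extensions along $\{\s\}\hookrightarrow\cell$ of $\kk$, so $\Hom_{\mathcal F}(F,I_\s)\cong F(\s)^{\ast}$, which is exact in $F$ --- and the unit $F\hookrightarrow\bigoplus_{\s}I_\s^{\oplus\dim_\kk F(\s)}$ shows $\mathcal F$ has enough injectives; since $\operatorname{soc}(I_\s)$ is the simple functor $S_\s$ concentrated at $\s$, the $I_\s$ are exactly the indecomposable injectives, i.e. the indecomposable injectives of $\Sq R$ are the $\kk[\M_\s]$. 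Finally, $0\to S_\s\to I_\s\to I_\s/S_\s\to 0$ with $\operatorname{soc}(I_\s/S_\s)=\bigoplus_{\t\,\lessdot\,\s}S_\t$, iterated, produces an injective resolution of $S_\s$ of length $\dim\s+1$, so every object, having finite length, has injective dimension $\le\max_\s(\dim\s+1)=\dim\cell+1=d$.

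The hard part is not any single step but the combinatorial bookkeeping behind the equivalence: pinning down how $\supp$ interacts with addition in $\sM$ (the join-of-faces statement, and that relative interiors are absorbing), verifying that the two constructions are inverse, and getting the injective-dimension bound to land at exactly $d$. The conceptual point --- and the reason the statement survives the loss of normality --- is that a squarefree module records only the relative interiors $\int(C_\s)$ and the incidence poset of the cones, data that $R$ shares with $\oR=\kk[\bMM]$; this is why, even though (as Remark~\ref{not subcomplex} already shows in the semigroup case) the modules $\kk[\M_\s]$ and $\kk[\tM_\s]$ realizing the indecomposable injectives over $R$ and over $\oR$ are genuinely different, the categories $\Sq R$ and $\Sq\oR$ coincide.
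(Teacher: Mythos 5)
Your proposal is correct and follows essentially the same route as the paper, which omits the proof but remarks precisely that $\Sq R$ is equivalent to the category of finitely generated modules over the incidence algebra of the poset $\cell$ (i.e.\ your functor category $\mathcal F$), from which the three assertions follow as you describe. Your write-up supplies the details the paper leaves implicit (the absorbing-interior argument for well-definedness of the structure maps, the identification of the $\kk[\M_\s]$ as the indecomposable injectives, and the length count giving injective dimension $\le d$), all of which check out.
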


The proof is similar to the cone-wise normal case (\cite{OY}), and we omit it here. 
We just remark that $\Sq R$ is equivalent to the category of 
finitely generated left $\Lambda$-modules, where $\Lambda$ 
is the incidence algebra of $\cell$ (as a poset) over $\kk$. 
 
\medskip

Let $\InjSq$ be the full subcategory of $\Sq R$ consisting of all injective objects, that is, 
finite direct sums of copies of $\kk[\M_\s]$ for various $\s \in \cell$. 
Then the bounded homotopy category $\Cb(\InjSq)$ 
is equivalent to $\Db(\Sq R)$. We have an exact functor 
$$\inHom_R(-, \upl \cpx I_R): \Cb(\InjSq) \to \Db(\Mod R)^\op.$$ 

Similarly, we have an  exact functor 
$$\inHom_R(-, \cpx D_R): \Cb(\InjSq) \to \Db(\Mod R)^\op.$$
The chain map $\phi: \upl \cpx I_R \to \cpx D_R$ gives a natural transformation 
$$\Phi: \inHom_R(-, \upl \cpx I_R) \to \inHom_R(-, \cpx D_R).$$

\begin{thm}\label{Phi}
If $R$ is seminormal, $\Phi$ is a natural isomorphism. 
\end{thm}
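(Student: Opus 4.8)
The plan is to check that $\Phi$ is an isomorphism objectwise on the indecomposable injectives $\kk[\M_\s]$, since both $\inHom_R(-,\upl\cpx I_R)$ and $\inHom_R(-,\cpx D_R)$ are exact functors out of $\Cb(\InjSq)\simeq\Db(\Sq R)$, and a natural transformation between exact functors on a triangulated category that is an isomorphism on a set of generators (here the $\kk[\M_\s]$, which generate $\Db(\Sq R)$ by Lemma~\ref{Sq lem}) is automatically a natural isomorphism. So it suffices to show that for each $\s\in\cell$ the map
$$\Phi_{\kk[\M_\s]}:\inHom_R(\kk[\M_\s],\upl\cpx I_R)\longrightarrow\inHom_R(\kk[\M_\s],\cpx D_R)$$
is an isomorphism in $\Db(\Mod R)$. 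The right-hand side is, by definition of the dualizing complex, $\inHom_R(\kk[\M_\s],\cpx D_R)=\cpx D_\s$ up to the usual shift, and it computes $\bR\Hom_R(\kk[\M_\s],\cpx D_R)$; since $\kk[\M_\s]$ is Cohen--Macaulay of dimension $c(\s)$ (it is seminormal, but here we only use normal... actually seminormal affine semigroup rings need not be CM — so I must be careful and instead argue via cohomology groups, not a single $\Ext$).

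First I would identify the left-hand side explicitly. Because $\upl\cpx I_R^{\,i}=\bigoplus_{\dim\t=i-1}\kk[\ZZ\M_\t\cap C_\t]$ and $\Hom_R(\kk[\M_\s],\kk[\M_\t])$ picks out exactly the faces $\t\le\s$ (using $\p_\s\subset\p_\t\iff\t\le\s$ and the squarefree structure), one gets $\inHom_R(\kk[\M_\s],\upl\cpx I_R)\cong(\upl\cpx I_R)|_{\text{faces of }\s}$, which is precisely $\upl\cpx I_\s$ — the complex built from the single cone $C_\s$ and its faces. Similarly $\inHom_R(\kk[\M_\s],\cpx I_{\oR})\cong\cpx I_{\ols}$, and $\inHom_R(\kk[\M_\s],\cpx D_R)\cong\cpx D_\s$. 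Then, unwinding the construction of $\phi$ in Lemma~\ref{composition} (the composite $\upl\cpx I_R\dashrightarrow\cpx I_{\oR}\to\cpx D_{\oR}\to\cpx D_R$), one checks that $\Phi_{\kk[\M_\s]}$ is exactly the map $\phi_\s:\upl\cpx I_\s\to\cpx D_\s$ of Lemma~\ref{commutative diagram} — this is where the commutativity of the three-square diagram in the proof of Lemma~\ref{composition}, together with $\mu_\s^i\circ\kappa_\s^i=\idmap$, does the bookkeeping. By Lemma~\ref{commutative diagram}, $\phi_\s$ is a quasi-isomorphism. Hence $\Phi_{\kk[\M_\s]}$ is an isomorphism in the derived category for every $\s$, and we are done.

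The main obstacle is the identification $\Phi_{\kk[\M_\s]}=\phi_\s$ up to quasi-isomorphism: one must match the "broken" map $\kappa$ (not a chain map) against the genuine chain maps, and verify that applying $\inHom_R(\kk[\M_\s],-)$ to the zig-zag $\upl\cpx I_R\dashrightarrow\cpx I_{\oR}\to\cpx D_{\oR}\to\cpx D_R$ really collapses to $\phi_\s:\upl\cpx I_\s\to\cpx D_\s$ and not to something merely homotopic to it in a way that spoils functoriality. Concretely, one applies $\Hom_R(\kk[\M_\s],-)$ termwise: the functor kills every summand $\kk[\ZZ\M_\t\cap C_\t]$ (resp.\ $\EE(\kk[\M_\t])$, resp.\ $\kk[\tM_\t]$) with $\t\not\le\s$ and is the identity on those with $\t\le\s$, so on objects the claim is immediate; for the differentials and the maps $\kappa,\psi,\lambda$ one invokes exactly the commuting squares already proved in Lemma~\ref{composition}, restricted to the faces of $\s$, which by definition of $\mu_\s,\lambda_\s,\psi_\s,\phi_\s$ reassemble into the square of Lemma~\ref{commutative diagram}. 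Once this reduction is in place, no further computation is needed: seminormality of $R$ enters only through Proposition~\ref{Ngu} (each $\kk[\M_\s]$ is seminormal, so $\cpx J_\s\cong(\cpx J_\s)_{C_\s}=\upl\cpx I_\s$ as used in building $\mu_\s$ and $\phi_\s$), and the conclusion for $\Phi$ follows from the devissage over the generating injectives.
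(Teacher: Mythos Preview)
Your proposal is correct and follows essentially the same route as the paper: reduce to the generators $\kk[\M_\s]$ of $\Db(\Sq R)$ (the paper cites \cite[Proposition~7.1]{Ha} for this d\'evissage), identify $\inHom_R(\kk[\M_\s],\upl\cpx I_R)\cong\upl\cpx I_\s$ and $\inHom_R(\kk[\M_\s],\cpx D_R)\cong\cpx D_\s$, and then observe via the diagram in Lemma~\ref{composition} that $\Phi(\kk[\M_\s])$ is exactly the quasi-isomorphism $\phi_\s$ of Lemma~\ref{commutative diagram}. Your additional bookkeeping about how $\Hom_R(\kk[\M_\s],-)$ restricts the complex to the faces $\t\le\s$ and how $\mu_\s\circ\kappa_\s=\idmap$ collapses the composite is precisely the content the paper packs into the phrase ``as shown in the proof of Lemma~\ref{composition}.''
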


\begin{proof}
By virtue of \cite[Proposition 7.1]{Ha}, it suffices to show that 
$$\Phi(\kk[\M_\s]): \upl \cpx I_\s = \inHom_R(\kk[\M_\s], \upl \cpx I_R) \to \inHom_R(\kk[\M_\s], \cpx D_R) = \cpx D_\s$$  
is a quasi-isomorphism for all $\s \in \cell$.    Since $\Phi(\kk[\M_\s]) = \inHom_R(\kk[\M_\s], \phi)$, it is factored as 
$\upl \cpx I_\s \stackrel{\kappa_\s}{\dashrightarrow} \cpx I_{\ols} \stackrel{\psi_\sigma}{\longrightarrow} \cpx D_{\ols} 
\stackrel{\lambda_\s}{\longrightarrow} \cpx D_\s$.  As shown in the proof of Lemma~\ref{composition}, 
this coincides with the quasi-isomorphism $\phi_\s$ of Lemma~\ref{commutative diagram}.  
\end{proof}

\noindent{\it The proof of Theorem~\ref{main}.}
The assertion follows from Theorem~\ref{Phi}. In fact, since $R \in \Sq R$, we have an isomorphism 
$\Phi(R): \inHom_R(\cpx E, \upl \cpx I_R) \to \inHom_R(\cpx E, \cpx D_R)$, where $\cpx E$ is 
an injective resolution of $R$ in $\Sq R$. It is clear that $ \inHom_R(\cpx E, \cpx D_R) \cong \inHom_R(R, \cpx D_R)  \cong \cpx D_R$, but we can 
also show that  $\inHom_R(\cpx E, \upl \cpx I_R)  \cong  \upl \cpx I_R$ by the usual double complex argument. The key fact is that
 $\inHom_R(\cpx E, \upl I^i_R)$  is an acyclic complex whose 0th cohomology is $\upl I^i_R$ for each $i$. To see this, note that  
an indecomposable components of $\cpx E$ and $\upl I^i_R$ are $\kk[\M_\s]$ and $\kk[\ZZ \M_\tau \cap C_\tau]$ 
respectively for some $\s, \tau \in \cell$, moreover 
$$\Hom_R(\kk[\M_\s], \kk[\ZZ \M_\tau \cap C_\tau]) \cong \begin{cases}
\kk[\ZZ \M_\tau \cap C_\tau] & \text{if $\s \ge \tau$,}\\
0 & \text{otherwise.}
\end{cases}$$ 
Take $a \in \sM$ with $\supp(a) =\t$. Then, $\s \ge \t$ if and only if $\kk[\M_\s]_a \ne0$. Hence we have 
$\inHom_R(\cpx E,  \kk[\ZZ \M_\tau \cap C_\tau]) \cong \inHom_\kk([\cpx E]_a,  \kk) \otimes_\kk  \kk[\ZZ \M_\tau \cap C_\tau]$.  
Since $[\cpx E]_a$ is an acyclic complex whose 0th cohomology is $\kk$,  $\inHom_R(\cpx E,  \kk[\ZZ \M_\t \cap C_\t])$  
is an  acyclic complex whose 0th cohomology is $\kk[\ZZ \M_\t \cap C_\t]$.  

Anyway, we have $\upl \cpx I_R \cong \inHom_R(\cpx E, \upl I^i_R) \cong  \inHom_R(\cpx E, \cpx D_R) \cong  \cpx D_R$, 
where the middle isomorphism is given by $\Phi(R)$. 
\qed

\medskip

The converse of Theorem~\ref{main} also holds. 

\begin{prop}\label{conv. of main} 
Let $R=\kk[\MM]$ be a toric face ring. If $\upl \cpx I_R$ is quasi-isomorphic to the dualizing complex $\cpx D_R$,  
then $R$ is seminormal. 
\end{prop}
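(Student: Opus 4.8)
The plan is to mimic the argument used to prove the converse direction in Proposition~\ref{+I_R}, exploiting the functorial machinery developed for Theorem~\ref{main}. The key observation is that the cochain complex $\upl \cpx I_R$ depends only on the seminormalization $\upl R = \kk[\upl \MM]$ of $R$, not on $R$ itself. Indeed, for each $\s \in \cell$ the monoids $\M_\s$ and $\upl \M_\s$ span the same cone $C_\s$ and have the same lattice ($\ZZ \upl \M_\s = \ZZ \M_\s$), so $\kk[\ZZ \M_\s \cap C_\s] = \kk[\ZZ \upl \M_\s \cap C_\s]$, and the surjections $\pi_{\t,\s}$ are the same. Hence we get an equality of cochain complexes of $R$-modules
$$\upl \cpx I_R = \upl \cpx I_{\upl R}.$$

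First I would apply Theorem~\ref{main} to the seminormal ring $\upl R$: it gives $\upl \cpx I_{\upl R} \cong \cpx D_{\upl R}$ in $\Db(\Mod \upl R)$, hence also in $\Db(\Mod R)$ since $\upl R$ is a finitely generated $R$-module (this is the toric-face analogue of Lemma~\ref{oR is fin. gen.}, applied to $\upl R$, or alternatively follows from $\upl R \subseteq \oR$). Combining with the equality above, $\upl \cpx I_R \cong \cpx D_{\upl R}$ in $\Db(\Mod R)$. Now I would apply the duality functor $\inHom_R(-, \cpx D_R)$. Because $\upl R$ is a finitely generated $R$-module and $\cpx D_R$ is a dualizing complex of $R$, we have $\inHom_R(\cpx D_{\upl R}, \cpx D_R) \cong \upl R$ in $\Db(\Mod R)$ by the standard biduality property of dualizing complexes together with $\cpx D_{\upl R} = \inHom_R(\upl R, \cpx D_R)$. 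On the other hand, the hypothesis $\upl \cpx I_R \cong \cpx D_R$ gives $\inHom_R(\upl \cpx I_R, \cpx D_R) \cong \inHom_R(\cpx D_R, \cpx D_R) \cong R$ in $\Db(\Mod R)$. Applying $\inHom_R(-,\cpx D_R)$ to the isomorphism $\upl \cpx I_R \cong \cpx D_{\upl R}$ then yields $R \cong \upl R$ as $R$-modules (equivalently, as complexes concentrated in degree $0$), and since the natural inclusion $R \hookrightarrow \upl R$ is the relevant map, this forces $R = \upl R$, i.e., $R$ is seminormal.

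The main obstacle is making the last step rigorous: one must check that the abstract isomorphism $R \cong \upl R$ of $R$-modules really implies $R = \upl R$ as subrings of $\oR$, rather than merely an isomorphism of modules. Here the argument of Proposition~\ref{+I_R} generalizes: the functor $\inHom_R(-,\cpx D_R)$ carries the inclusion $\upl \cpx I_R = \upl \cpx I_{\upl R} \hookrightarrow$ (a representative of) the identity to the canonical map $R \hookrightarrow \upl R$, so the resulting module isomorphism is compatible with this inclusion. Since $R$ and $\upl R$ have the same total quotient ring and $\upl R/R$ is a finite-length-supported module, an $R$-module isomorphism $\upl R \cong R$ that is compatible with $R \hookrightarrow \upl R$ forces $\upl R/R = 0$. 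A small amount of care is needed because, unlike in the affine semigroup case, $R$ is not graded in the usual sense; but one can work in the category $\Mgr R$ of $\sM$-graded modules (or use the $\sM$-grading on $\oR$ and its submodules $R, \upl R$), where the unit group is again just $\kk \setminus \{0\}$, so the grading on the module $R$ is unique up to shift, and one concludes exactly as in the proof of Theorem~\ref{normal} that the relevant shift is trivial. This localizes the whole question to each cell, where it reduces to the already-known affine semigroup statement of Proposition~\ref{+I_R}.
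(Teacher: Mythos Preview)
Your proposal is correct and follows essentially the same route as the paper: the paper's proof simply notes that $\upl \cpx I_R = \upl \cpx I_{\upl R}$ and then says that the argument for the converse direction of Proposition~\ref{+I_R} applies verbatim, which is exactly the duality argument $\inHom_R(-,\cpx D_R)$ you spell out. Your second paragraph worrying about the passage from $R \cong \upl R$ as $R$-modules to $R = \upl R$ is more than the paper itself provides (the paper just asserts it), and while some of that discussion is a bit convoluted, the concern is legitimate and easily settled: if $\upl R$ is free of rank one over $R$, say $\upl R = R u$, then writing $1 = du$ and $u^2 = cu$ gives $dc = 1$, so $u = d^{-1} \in R$ and hence $\upl R = R$.
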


\begin{proof}
Recall that  $\upl \MM:=\{ \upl \M_\s \}_{\s \in \cell}$ forms a monoidal complex, and the toric face ring 
$\upl R=\kk[\upl \MM]$ is the seminormalization of $R$. 
Since $\upl \cpx I_{\upl R}= \upl \cpx I_R$, the proof of the latter half of Theorem~\ref{main} 
also works here. 
\end{proof}

\section{Local cohomologies}

Recall that a monoidal complex $\MM=\{ \M_\s\}_{\s \in \cell}$ is 
a collection of additive submonoids $\M_\s$ of  
lattices $\bL_\s \cong \ZZ^{\dim \s +1}$ for each $\s \in \cell$, and we have an injective homomorphisms 
$\tilde{\i}_{\s, \t}:\bL_\t \to \bL_\s$ for all $\s, \t \in \cell$ with $\s \geq \t$. 
Set
$$
\LL := \colimit_{\s \in \cell}\bL_\s.  
$$
Note that $\LL$ is no longer a group in general.  
Since all $\tilde{\i}_{\s,\t}$ is injective, we can regard $\bL_\s$ as a subset of $\LL$. 
Let $a, b \in \LL$. If there is some $\s \in \cell$ with $a, b \in \bL_\s$, 
we have $a + b \in \bL_\s \subset \LL$. 
If there is no $\s \in \cell$ with $a,b \in \bL_\s$, then $a + b$ does not exist. 
However, any $a \in \LL$ has $-a \in \LL$. 
We can regard that $|\bMM| \subset \LL$, and the structure of $\LL$ defined above and that of $|\bMM|$ are compatible with this injection. 

\begin{dfn}
Let $R := \fring \MM$ be a toric face ring. 
Then $M \in \Mod R$ is said to be $\LL$-graded if the following conditions are satisfied;
\begin{itemize}
  \item[(i)] $M = \bigoplus_{a \in \LL}M_a$ as $\kk$-vector spaces;
  \item[(ii)] $x^a M_b \subset M_{a + b}$ if $a \in \M_\s$ and $b \in \bL_\s$ for some $\s \in \cell$, and $x^a M_b = 0$ otherwise.
\end{itemize}
Let $\Lgr R$ be the category of $\LL$-graded $R$-modules and $R$-homomorphisms $f:M \to N$ with $f(M_a) \subset N_a$ for all $a \in \LL$. 
\end{dfn} 

Clearly, $\Mgr R$ and $\Mbgr R$ are full subcategories of $\Lgr R$. 
Note that $T_\s := \{ \, x^a \mid a \in \M_\s \, \} \subset R$ is a multiplicatively closed subset.
As shown in \cite[Lemma~2.1]{OY}, the localization $T_\s^{-1}R$  is $\LL$-graded.

Well, set 
$$
\Cec^i_R := \bigoplus_{\substack{\s \in \cell \\ \dim \s = i-1}}T_\s^{-1}R
$$
and  define $\partial:\Cec^i_R \to \Cec^{i+1}_R$
by
$$
\partial(x) = \sum_{\substack{\t \ge \s \\ \dim \t = i}} \e(\t,\s) \cdot \iota_{\t, \s}(x)
$$
for $x \in T^{-1}_\s R \subset \Cec_R^i$,
where $\e$ is an incidence function on $\cell$ and $\iota_{\t, \s}$ is a natural map $T^{-1}_\s R \to T^{-1}_\t R$
for $\s \le \t$.
Then $(\Cec^\bullet_R, \partial)$ forms a cochain complex in $\Lgr R$:
$$
0 \longto \Cec^0_R \too \Cec^1_R \too \cdots \too \Cec^d_R \longto 0. 
$$

We set $\m := (x^a \mid 0 \not= a \in \sM)$. This is a maximal ideal of $R$.
The following result has been proved by  Ichim and R\"omer \cite{IR} in the case 
$\MM$ comes from a fan in $\RR^d$, and Okazaki and the present author 
in the general case. (The proofs are essentially the same.)

\begin{prop}[{\cite[Theorem~4.2]{IR}, \cite[Proposition~3.2]{OY}}]\label{Cech}
For any $R$-module $M$, we have 
$$
H^i_\m(M) \cong H^i(\Cec^\bullet_R \otimes_R M),
$$
for all $i$. In particular, $H_\m^i(R)$ is $\LL$-graded. 
\end{prop}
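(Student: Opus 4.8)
The plan is to show that $\Cec^\bullet_R \otimes_R M$ computes $\bR\Gamma_\m(M)$ for every $M$. Each term $\Cec^i_R = \bigoplus_{\dim\s = i-1} T_\s^{-1}R$ is a finite direct sum of localizations of $R$, hence flat, and the complex is bounded, so tensoring $\Cec^\bullet_R$ with a quasi-isomorphism of bounded-below $R$-complexes is again a quasi-isomorphism; in particular, for an injective resolution $M \to E^\bullet$ the map $\Cec^\bullet_R \otimes_R M \to \Cec^\bullet_R \otimes_R E^\bullet$ is a quasi-isomorphism. The whole argument then reduces to one key fact: \emph{for every injective $R$-module $E$, the complex $\Cec^\bullet_R \otimes_R E$ is concentrated in cohomological degree $0$, with $H^0(\Cec^\bullet_R \otimes_R E) = \Gamma_\m(E)$}. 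Granting this, the spectral sequence of the double complex $\Cec^\bullet_R \otimes_R E^\bullet$, filtered by the resolution degree, has $E_1$-term $\Gamma_\m(E^q)$ in the column $p=0$ and $0$ elsewhere, so it degenerates and yields $H^i(\Cec^\bullet_R \otimes_R M) \cong H^i(\Gamma_\m(E^\bullet)) = H^i_\m(M)$, naturally in $M$. (Equivalently, one invokes the comparison theorem for $\delta$-functors: $\{H^i(\Cec^\bullet_R \otimes_R -)\}$ is a $\delta$-functor by flatness, and the key fact supplies both the effaceability in positive degrees and, via the start of an injective resolution, the identification $H^0 \cong \Gamma_\m$.) Note that $R = \kk[\MM]$ is a finitely generated $\kk$-algebra, hence noetherian, so the structure theory of injectives applies, and $\Cec^0_R = T_\emptyset^{-1}R = R$ since $\M_\emptyset = \{0\}$.

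To prove the key fact I would first reduce to $E = E(R/\p)$ for $\p \in \opn{Spec} R$, since $-\otimes_R E$ commutes with direct sums and, over a noetherian ring, every injective is such a direct sum. For $s \in R \setminus \p$ multiplication by $s$ is an automorphism of $E(R/\p)$, and for $s \in \p$ it is locally nilpotent on $E(R/\p)$; hence $T_\s^{-1}E(R/\p)$ equals $E(R/\p)$ when $T_\s \cap \p = \emptyset$ and is $0$ otherwise, the transition maps $\iota_{\t,\s}$ being identities in the nonzero case. Therefore, after the reindexing that places a cell of dimension $i-1$ in cohomological degree $i$, $\Cec^\bullet_R \otimes_R E(R/\p)$ is the augmented cellular cochain complex of the down-closed subcomplex $\cell_\p := \{\,\s \in \cell \mid T_\s \cap \p = \emptyset\,\}$ with coefficients $E(R/\p)$ and signs from the incidence function $\e$ of $\cell$. (It is down-closed because $\s \leq \t$ gives $\M_\s \subseteq \M_\t$, hence $T_\s \subseteq T_\t$.)

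The crux is that $\cell_\p$ is the poset of faces of a single closed cell $\overline{\t}$. Let $\p^*$ be the largest monomial ideal contained in $\p$. Primality of $\p$ shows that if $a,b \in \sM$ are not summable then $x^ax^b=0 \in \p$, so one of $x^a,x^b$ lies in $\p$; consequently $S := \{\,a \in \sM \mid x^a \notin \p\,\}$ is a submonoid all of whose pairs are summable, and an elementary argument using the intersection property of $\cell$ shows $S$ lies inside a single $\M_\t$, whence $\p \supseteq \p_\t$. Passing to the affine semigroup ring $R/\p_\t = \kk[\M_\t]$ and invoking the classical fact (torus action) that the largest monomial ideal of a prime of an affine semigroup ring is a monomial prime, one concludes $S = \M_\t$, i.e. $\p^* = \p_\t$. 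Then $T_\s \cap \p = \emptyset \iff \M_\s \subseteq \M_\t \iff \s \leq \t$, so $\cell_\p$ is exactly the face poset of $\overline{\t}$. If $\t \neq \emptyset$, then $\overline{\t}$ is a ball, hence contractible; its augmented cochain complex over $\kk$ is split exact and stays exact after $\otimes_\kk E(R/\p)$, consistent with $\Gamma_\m(E(R/\p)) = 0$ (which holds since $\opn{Ass}(\Gamma_\m(E(R/\p))) \subseteq \{\p\} \cap \{\m\} = \emptyset$, as $\p \neq \m$). If $\t = \emptyset$, then $S = \{0\}$, so $\m \subseteq \p$ and $\p = \m$ by maximality of $\m$; then $\cell_\p = \{\emptyset\}$ and $\Cec^\bullet_R \otimes_R E(R/\m) = E(R/\m)$ in degree $0$, which equals $\Gamma_\m(E(R/\m))$. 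This proves the key fact, hence the isomorphism $H^i_\m(M) \cong H^i(\Cec^\bullet_R \otimes_R M)$. For the last assertion: $H^i_\m(R) = H^i(\Cec^\bullet_R \otimes_R R) = H^i(\Cec^\bullet_R)$, and $\Cec^\bullet_R$ is a complex in $\Lgr R$ — each $T_\s^{-1}R$ is $\LL$-graded by \cite[Lemma~2.1]{OY}, and the differentials are $\kk$-linear combinations of the $\LL$-graded maps $\iota_{\t,\s}$ — so $H^i_\m(R)$ is $\LL$-graded.

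The main obstacle is the identification $\cell_\p = \overline{\t}$, and within it the structural input that the largest monomial ideal contained in a prime of a toric face ring is a monomial prime: this is the point at which the argument must genuinely reduce (via $\p \supseteq \p_\t$, equivalently $S = \M_\t$) to the corresponding classical statement for affine semigroup rings, and carrying out that reduction while respecting the glued, non-graded nature of $\kk[\MM]$ is the delicate step. Once $\cell_\p = \overline{\t}$ is known the topology is trivial — regular CW complexes have contractible closed cells — and I expect no difficulty there. A secondary point worth care is that $R$ is generally not local, so the comparison with $\bR\Gamma_\m$ and the vanishing $\Gamma_\m(E(R/\p)) = 0$ for $\p \neq \m$ must use only that $\m$ is a maximal ideal.
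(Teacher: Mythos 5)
Your argument is correct, and it is essentially the proof of the cited sources: the paper itself does not prove Proposition~\ref{Cech} but refers to \cite[Theorem~4.2]{IR} and \cite[Proposition~3.2]{OY}, which proceed exactly as you do — reduce by flatness to indecomposable injectives $E(R/\p)$, identify $\{\,\s\in\cell\mid T_\s\cap\p=\emptyset\,\}$ with the face poset of a single closed cell via the fact that the largest monomial ideal $\p^*$ contained in $\p$ is some $\p_\t$, and conclude by contractibility of $\overline{\t}$, in the spirit of \cite[Theorem~6.2.5]{BH}. The one step you compress (``an elementary argument using the intersection property shows $S$ lies inside a single $\M_\s$'') does go through, by inducting on finite subsets of $S$ and using that a sum of elements of a cone lies in the relative interior of the smallest face containing all of them.
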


\begin{cor}
Let $\cell$ be a CW complex supporting $R=\kk[\MM]$, and  $X$ 
the underlying topological space of $\cell$. 
Then we have 
$[H_\m^i(R)]_0 \cong \rH^{i-1}(X;\kk)$, 
where  $0$ is the zero element of $\LL$ and $\rH^{i-1}(X;\kk)$ is the $i^{\rm th}$ reduced cohomology 
of $X$ with the coefficients in $\kk$. 
\end{cor}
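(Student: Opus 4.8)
The plan is to reduce the claim to a computation with the \v Cech complex $\Cec^\bullet_R$ of Proposition~\ref{Cech}, and then to recognize its degree-zero strand as the reduced cellular cochain complex of $\cell$. First I would note that $\Cec^\bullet_R$ is a complex in $\Lgr R$ and that the functor $M \mapsto M_0$ extracting the degree-$0$ component is exact on $\Lgr R$ (the component is a $\kk$-vector space direct summand respected by all morphisms), so it commutes with $H^i$. Applying Proposition~\ref{Cech} with $M = R$ then yields
$$[H^i_\m(R)]_0 \cong [H^i(\Cec^\bullet_R)]_0 \cong H^i\bigl([\Cec^\bullet_R]_0\bigr).$$

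Next I would make $[\Cec^\bullet_R]_0$ explicit. For each $\s \in \cell$ the $\LL$-graded ring $T_\s^{-1}R$ has at most one-dimensional homogeneous components, and its degree-$0$ component is exactly $\kk\cdot 1$; hence $[\Cec^i_R]_0 = \bigoplus_{\dim\s = i-1}\kk$, and since each structure map $\iota_{\t,\s}$ sends $1$ to $1$, the differential acts on the summand indexed by $\s$ by $1_\s \longmapsto \sum_{\t \ge \s,\ \dim\t = \dim\s + 1}\e(\t,\s)\,1_\t$. Re-indexing by the cell dimension $j = i - 1$ (so that $j = -1$ selects the cell $\emptyset$, which contributes the augmentation term $\kk$), this is precisely the augmented, i.e. reduced, cochain complex $\widetilde{C}^\bullet(\cell;\kk)$ of the regular CW complex $\cell$, whose coboundary is built from the incidence function $\e$. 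As $\cell$ is a regular CW decomposition of $X$, this complex computes $\rH^\bullet(X;\kk)$ (see \cite[\S6.2]{BH} for the fact that the incidence numbers $\e(\t,\s) = \pm 1$ of a regular CW complex produce the cellular (co)chain complex). Therefore $H^i\bigl([\Cec^\bullet_R]_0\bigr) \cong \rH^{i-1}(X;\kk)$, which is the assertion.

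The only delicate points are bookkeeping: that the exact functor $(-)_0$ legitimately commutes with $H^i$, that $[T_\s^{-1}R]_0 = \kk$ for every $\s$, and — the genuine content — that the incidence function $\e$ used to define $\Cec^\bullet_R$ is the very one giving the cellular coboundary of $\cell$, so that the shift $i \mapsto i-1$ correctly matches $[\Cec^\bullet_R]_0$ with $\widetilde{C}^\bullet(\cell;\kk)$. I expect this last identification (and the off-by-one caused by $\dim\emptyset = -1$) to be the main thing to state carefully; everything else is routine.
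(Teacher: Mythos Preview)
Your argument is correct and follows exactly the same line as the paper's proof: use Proposition~\ref{Cech}, observe $[T_\s^{-1}R]_0=\kk$ for every $\s$, and identify $[\Cec^\bullet_R]_0$ with the reduced cochain complex of $\cell$. The only remark is that the aside ``$T_\s^{-1}R$ has at most one-dimensional homogeneous components'' is stronger than what you need and not obviously true for a general (non-fan) $\MM$; but since you only use $[T_\s^{-1}R]_0=\kk\cdot 1$, this does not affect the proof.
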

\begin{proof}
Since $[T_\s^{-1}R]_0=\kk$ for all $\s \in \cell$, the cochain complex 
$[\Cec^\bullet_R]_0$ of $\kk$-vector spaces is isomorphic to 
the reduced cochain complex of $\cell$ with the coefficients in $\kk$. 
Hence the assertion follows from Proposition~\ref{Cech}. 
\end{proof}

For  $M \in \Lgr R$, set $M_{-|\bMM|}:= \bigoplus_{a  \in |\bMM|} M_{-a}$. 
Since $M_{-|\bMM|}$ is not an $R$-module in general, we just regard it as an $\LL$-graded $\kk$-vector space.

\begin{lem}\label{seminormal locla cohomology}
If a toric face ring $R=\kk[\MM]$ is seminormal, then  we have 
$$H_\m^i(R)=[H_\m^i(R)]_{-|\bMM|}$$ 
for all $i$. 
\end{lem}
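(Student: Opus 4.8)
The plan is to reduce the statement to the affine semigroup case (Theorem~\ref{RRBLR}(iii)) by using the \v Cech complex description of $H_\m^i(R)$ from Proposition~\ref{Cech} together with the $\LL$-grading. Recall that $H_\m^i(R) \cong H^i(\cpx \Cec_R)$, where $\Cec^i_R = \bigoplus_{\dim \s = i-1} T_\s^{-1}R$. Since each $T_\s^{-1}R$ is $\LL$-graded and the differentials are $\LL$-graded, the cohomology $H^i_\m(R)$ decomposes over $\LL$, and we may compute its $a$-component for each $a \in \LL$ separately. So the claim $H_\m^i(R) = [H_\m^i(R)]_{-|\bMM|}$ amounts to: if $[H_\m^i(R)]_a \ne 0$ for some $a \in \LL$, then $-a \in |\bMM|$.

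First I would analyze the homogeneous pieces of $T_\s^{-1}R$. For a cell $\s$, the localization $T_\s^{-1}R$ is supported (as an $\LL$-graded vector space) on a subset of $\bL_\s$; more precisely $[T_\s^{-1}R]_a \ne 0$ iff $a \in \M_\s - \M_\s = \ZZ\M_\s$ lies in (a suitable translate determined by $C_\s$), i.e.\ $a \in \ZZ\M_\s \cap (\M_\s + (-C_\s))$ — the same kind of ``$\M_\s - \M$'' set that appeared in the semigroup-ring section, adapted cone-wise. The key point is that for a fixed $a \in \LL$, the sub-\v Cech-complex $[\Cec^\bullet_R]_a$ involves only those cells $\s$ with $a \in \bL_\s$, and on those the computation is governed entirely by the local data of $\M_\s$ near the face structure of $C_\s$. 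The natural move is to compare $[\Cec^\bullet_R]_a$ with the degree-$a$ part of the \v Cech complex of a single affine semigroup ring $\kk[\M_\s]$ (for $\s$ minimal with $a \in \bL_\s$, or rather a star-type argument over cells containing that support), so that vanishing of $[H^i_\m(R)]_a$ reduces to the semigroup-ring statement Theorem~\ref{RRBLR}(iii): since $R$ is seminormal, each $\kk[\M_\s]$ is seminormal by Proposition~\ref{Ngu}, hence $[H_\m^i(\kk[\M_\s])]_a \ne 0$ forces $-a \in C_\s$, and since $a \in \ZZ\M_\s$ (because $[T_\s^{-1}R]_a\ne 0$ already forces this), seminormality of $\M_\s$ gives $-a \in \ZZ\M_{\s'} \cap \int(C_{\s'}) \subset \tM_{\s'}$ for the appropriate face $\s'$, i.e.\ $-a \in |\bMM|$.

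The main obstacle I anticipate is making the ``restrict the \v Cech complex to a single cone'' step precise: the global \v Cech complex $\Cec^\bullet_R$ mixes all cells, and for a fixed degree $a$ one must identify exactly which cells contribute and check that the resulting subcomplex is (isomorphic to, or has the same cohomology in degree $i$ as) the degree-$a$ strand of a genuine affine-semigroup \v Cech complex — including matching up the incidence functions $\e(\t,\s)$ with those of the cross-section of $C_\s$. A clean way to handle this is to localize: pick any cell $\t$ with $-a \notin \tM_\s$ for all $\s \supseteq \t$ and show $[\Cec^\bullet_R]_a$ is acyclic by an inductive/poset argument over the star of $\supp(\cdot)$, mirroring the flabbiness/Ishida-complex arguments already invoked for Theorem~\ref{sec:ishida}. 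Once that local reduction is in place, the seminormal case of Theorem~\ref{RRBLR} closes the argument degree by degree, and reassembling over all $a \in \LL$ yields $H^i_\m(R) = [H^i_\m(R)]_{-|\bMM|}$.
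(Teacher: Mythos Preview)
Your plan and the paper's share the same ingredients (Propositions~\ref{Cech} and~\ref{Ngu}, Theorem~\ref{RRBLR}) but differ in the reduction mechanism. The paper does not attempt a degree-by-degree analysis of $[\Cec^\bullet_R]_a$. Instead it works categorically, exactly as in Theorem~\ref{Phi}: it sets up two functors $\bF_i = \for \circ H^i(-\otimes_R \Cec^\bullet_R)$ and $\bF'_i = [H^i(-\otimes_R \Cec^\bullet_R)]_{-|\bMM|}$ from $\Db(\Sq R)$ to $\LL$-graded $\kk$-vector spaces, together with the evident natural transformation $\Psi_i:\bF'_i\to\bF_i$. By Proposition~\ref{Ngu} each $\kk[\M_\s]$ is seminormal, so Theorem~\ref{RRBLR} gives that $\Psi_i(\kk[\M_\s])$ is an isomorphism. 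Since the $\kk[\M_\s]$ are the indecomposable injectives of $\Sq R$ (Lemma~\ref{Sq lem}), the same appeal to \cite[Proposition~7.1]{Ha} used for Theorem~\ref{Phi} makes $\Psi_i$ a natural isomorphism; evaluating at $R\in\Sq R$ finishes.

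The obstacle you flag is real, and it is exactly what the paper's route sidesteps. For a fixed $a\in\LL$ the cells $\s$ with $[T_\s^{-1}R]_a\neq 0$ need not all lie below a single maximal cell of $\cell$, so $[\Cec^\bullet_R]_a$ is not in general the degree-$a$ strand of one affine semigroup \v Cech complex; your ``star-type/poset-acyclicity'' suggestion would have to handle genuinely global combinatorics of $\cell$, and you have not carried that out. In other words, the passage from ``each $\kk[\M_\s]$ has the property'' to ``$R$ has the property'' is where the content lies, and your proposal leaves that step unfilled. The paper's categorical argument makes that passage formal: it checks the comparison only on the generators $\kk[\M_\s]$, where the question \emph{is} a single-semigroup computation, and then gets $R$ for free.
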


\begin{proof}
We use the same idea as the proof of Theorem~\ref{Phi}. 
Let $\Sq R$ be the category of squarefree $R$-modules. (See Definition~\ref{Sq}.)

Let $\Lvect \kk$ be the category of $\LL$-graded $\kk$-vector spaces, 
and $(-)_{ -|\bMM|} :\Lgr R \to \Lvect \kk$ the functor which sends $M$ to $M_{-|\bMM|}$. 
We also have the forgetful functor  $\for : \Lgr R \to \Lvect \kk$.

Now, for each $i \in \ZZ$, we define the following two functors from $\Db(\Sq R)$ to $\Lvect \kk$: 
$$\bF_i: \for \circ H^i(- \otimes_R \check{C}^\bullet_R) \qquad \text{and} \qquad 
\bF'_i: [H^i(- \otimes_R \check{C}^\bullet_R)]_{ -|\bMM|}.$$
Since $V_{ -|\bMM|}$ is a subspace of $V \in \Lgr \kk$, we have 
the natural transformation $\Psi_i: \bF'_i \to \bF_i$. 
Since $R$ is seminormal, $\kk[\M_\s]$ is seminormal for all $\s$ by Proposition~\ref{Ngu}. 
Hence $[H_\m^i(\kk[\M_\s])]_{ -|\bMM|}= H_\m^i(\kk[\M_\s])$, 
 in fact, we have  $[H_\m^i(\kk[\M_\s])]_{-C_\s}= H_\m^i(\kk[\M_\s])$ by Theorem~\ref{RRBLR}. 
It means that $\Psi_i(\kk[\M_\s])$ is an isomorphism, and hence 
$\Psi_i$ is a natural isomorphism by the same reason as in the proof of 
Theorem~\ref{Phi}. In particular,  $\Psi_i(R): \bF'_i(R) \to \bF_i(R)$ is an isomorphism. 
Hence $\bF'_i(R)=[H_\m^i(R)]_{ -|\bMM|}$ and $\bF_i(R)=H_\m^i(R)$ are isomorphic. 
\end{proof}

\begin{prop}\label{local cohomology comparison}
Let $R=\kk[\MM]$ be a toric face ring, and $\upl R$ its seminormalization. Then we have 
$$H_\m^i(\upl R) \cong [H_\m^i(R)]_{ -|\bMM|}$$ as $\LL$-graded $\kk$-vector spaces for all $i$.  
\end{prop}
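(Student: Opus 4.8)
The plan is to compare the $\LL$-graded Čech complex of $R$ with that of $\upl R$ via the natural inclusion $R \hookrightarrow \upl R$, and to isolate the ``negative $|\bMM|$-graded part'' on which the two agree. First I would recall that $\upl R = \kk[\upl \MM]$ with $\upl \MM = \{\upl \M_\s\}_{\s \in \cell}$, and that both $R$ and $\upl R$ are supported on the same CW complex $\cell$, with the same collection of cones $C_\s$; moreover for each $\s$ the seminormalization $\kk[\upl \M_\s]$ of $\kk[\M_\s]$ satisfies $\ZZ \upl \M_\s = \ZZ \M_\s$ and $C_{\upl\M_\s}=C_\s$. Consequently the multiplicative sets $T_\s \subset R$ and $\upl T_\s := \{x^a \mid a \in \upl\M_\s\} \subset \upl R$ give localizations with $T_\s^{-1}R \subset \upl T_\s^{-1}\upl R$, and in fact $[T_\s^{-1}R]_a = [\upl T_\s^{-1}\upl R]_a$ whenever $-a \in C_\s$ (equivalently the graded piece in a ``nonpositive'' degree relative to $C_\s$ is unchanged by passing to the seminormalization — this is the local, single-cone statement underlying Theorem~\ref{RRBLR}(iii) and its proof via $H_\m^i(\kk[\M_\s])_a \ne 0 \Rightarrow -a \in C_\s$). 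So the inclusion $\Cec_R^\bullet \hookrightarrow \Cec_{\upl R}^\bullet$ of $\LL$-graded complexes is an isomorphism in every degree $a$ with $-a \in |\bMM|$: that is, $[\Cec_R^\bullet]_{-|\bMM|} = [\Cec_{\upl R}^\bullet]_{-|\bMM|}$ as complexes of $\kk$-vector spaces.

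Next, taking cohomology and using Proposition~\ref{Cech}, since $(-)_{-|\bMM|}$ is exact on $\Lgr R$ (it is just picking out a direct summand of the underlying $\LL$-graded vector space), we get
$$[H_\m^i(R)]_{-|\bMM|} = H^i\big([\Cec_R^\bullet]_{-|\bMM|}\big) = H^i\big([\Cec_{\upl R}^\bullet]_{-|\bMM|}\big) = [H_\m^i(\upl R)]_{-|\bMM|}$$
for all $i$. Finally, since $\upl R$ is seminormal, Lemma~\ref{seminormal locla cohomology} gives $H_\m^i(\upl R) = [H_\m^i(\upl R)]_{-|\bMM|}$, so the right-hand side above is all of $H_\m^i(\upl R)$, and we conclude $H_\m^i(\upl R) \cong [H_\m^i(R)]_{-|\bMM|}$ as $\LL$-graded $\kk$-vector spaces.

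The main obstacle I expect is the first step: carefully verifying that the inclusion $\Cec_R^\bullet \hookrightarrow \Cec_{\upl R}^\bullet$ really is a degreewise isomorphism in all degrees $-a$ with $a \in |\bMM|$. This reduces, cell by cell, to showing $[T_\s^{-1}R]_b = [\upl T_\s^{-1}\upl R]_b$ whenever $-b \in |\bMM|$; one must check both that $\M_\s - \M_\s$-translates and $\upl\M_\s - \upl\M_\s$-translates of the relevant cones produce the same $\LL$-graded pieces in those degrees, and that the differentials (built from the incidence function $\e$ and the natural maps $\i_{\t,\s}$, which are identical for $\MM$ and $\upl\MM$) match up. The key input is that $\ZZ\M_\s = \ZZ\upl\M_\s$ and that $T_\s^{-1}R$, as an $\LL$-graded module, has one-dimensional pieces exactly in the degrees of $\bigcup_{\t}(\M_\s - \bL_\s)$-type translates; a clean way to phrase it is that $T_\s^{-1}R \cong T_\s^{-1}\kk[\M_\s]$ in degrees supported on $C_\s$, and $T_\s^{-1}\kk[\M_\s]$ agrees with $\upl T_\s^{-1}\kk[\upl\M_\s]$ in all degrees $b$ with $-b \in C_\s$ by Theorem~\ref{RRBLR} applied to the single semigroup $\M_\s$. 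Once this degreewise identification of Čech complexes is in hand, the rest is formal.
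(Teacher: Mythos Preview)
Your proposal is correct and follows essentially the same route as the paper: both arguments identify $[\Cec_R^\bullet]_{-|\bMM|}$ with $[\Cec_{\upl R}^\bullet]_{-|\bMM|}$ degreewise (the paper phrases this as $\{a \in |\bMM| \mid [T_\s^{-1}R]_{-a}\ne 0\} = \ZZ\M_\s \cap C_\s = \{a \in |\bMM| \mid [T_\s^{-1}(\upl R)]_{-a}\ne 0\}$, which is exactly the computation you anticipate as the ``main obstacle''), take cohomology, and then invoke Lemma~\ref{seminormal locla cohomology} to remove the $-|\bMM|$ truncation on the $\upl R$ side.
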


\begin{proof}
It is easy to see that 
$$\{ \, a \in |\bMM| \mid [T_\s^{-1} R ]_{-a} \ne 0 \, \}= \ZZ \M_\s \cap C_\s = 
\{ \, a \in |\bMM| \mid [T_\s^{-1} (\upl R) ]_{-a} \ne 0 \, \}$$
for all $\s \in \cell$. 
Hence we have $(\check{C}_R^\bullet)_{-a}= (\check{C}_{\upl R}^\bullet)_{-a}$ for all $a \in |\bMM|$.
Now the assertion follows from the following computation;
$$[H_\m^i(R)]_{ -|\bMM|} \cong [H^i(\check{C}^\bullet_R)]_{ -|\bMM|} \cong  [H^i(\check{C}^\bullet_{\upl R})]_{ -|\bMM|}
\cong [H_\m^i(\upl R)]_{ -|\bMM|} \cong H_\m^i(\upl R).$$
Here the second ``$\cong$" follows from the fact stated above, and the last one 
is Lemma~\ref{seminormal locla cohomology}. 
\end{proof}

\begin{rem}
In some sense, Proposition~\ref{local cohomology comparison} generalizes and refines 
the results and the problem in \S 4 of Nguyen~\cite{Ngu}  (especially, \cite[Theorem~4.3]{Ngu}).  
However, the toric face rings in \cite{Ngu} are assumed to have nice multigradings, while the ``$\LL$-grading" 
of our $\kk[\MM]$ is not the grading in the usual sense. 
\end{rem}

\begin{cor}\label{R and uplR}
Let $R=\kk[\MM]$ be a toric face ring, and $\upl R$ its seminormalization. 
If $R$ is Cohen-Macaulay, then so is $\upl R$. 
\end{cor}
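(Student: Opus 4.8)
The plan is to deduce the corollary directly from Proposition~\ref{local cohomology comparison}, which already does all the real work. Recall that $R$ is Cohen-Macaulay if and only if $H_\m^i(R) = 0$ for all $i < d = \dim R$, and the same dimension formula $\dim \upl R = \dim R = d$ holds since $\upl R$ is a finitely generated $R$-module with the same underlying CW complex $\cell$ (indeed $|\upl \MM| = \sM$ as sets, so $\dim \kk[\upl \MM] = \dim \cell + 1 = d$). So it suffices to show that vanishing of $H_\m^i(R)$ forces vanishing of $H_\m^i(\upl R)$ in the relevant range.

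First I would invoke Proposition~\ref{local cohomology comparison}: for every $i$ we have an isomorphism of $\LL$-graded $\kk$-vector spaces
$$H_\m^i(\upl R) \cong [H_\m^i(R)]_{-|\bMM|},$$
and the right-hand side is by definition a $\kk$-subspace of $H_\m^i(R)$. Hence $H_\m^i(R) = 0$ immediately gives $H_\m^i(\upl R) = 0$. Now assume $R$ is Cohen-Macaulay, so $H_\m^i(R) = 0$ for all $i \ne d$; then $H_\m^i(\upl R) = 0$ for all $i \ne d$ as well. Since $\dim \upl R = d$, this is precisely the statement that $\upl R$ is Cohen-Macaulay.

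There is essentially no obstacle here: the corollary is a one-line consequence once Proposition~\ref{local cohomology comparison} is in hand. The only mild point worth a sentence is the remark that $\dim \upl R = \dim R$, which I would justify by noting that $\upl R = \kk[\upl \MM]$ is a toric face ring supported by the same $\cell$ (as stated just before Theorem~\ref{main}), so its Krull dimension equals $\dim \cell + 1 = d$; alternatively, $\upl R$ is module-finite over $R$ and contains $R$, so the two rings share the same dimension. With that noted, the Cohen-Macaulay characterization via local cohomology applies verbatim to both rings in the same degree range, and the inclusion $[H_\m^i(R)]_{-|\bMM|} \subseteq H_\m^i(R)$ finishes the argument.
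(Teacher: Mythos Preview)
Your proof is correct and uses the same essential tool as the paper, namely Proposition~\ref{local cohomology comparison}, which identifies $H_\m^i(\upl R)$ with the graded piece $[H_\m^i(R)]_{-|\bMM|}$ of $H_\m^i(R)$.

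There is a small difference in packaging. The paper argues by contrapositive and routes the implication through the dualizing complex: it assumes $\upl R$ is not Cohen--Macaulay, invokes Theorem~\ref{main} to say some $H^{-i}(\upl\cpx I_{\upl R})\ne 0$ with $i<d$, then uses the degree-wise $\kk$-duality between $\upl\cpx I_{\upl R}$ and $\cpx\Cec_{\upl R}$ to obtain $H_\m^i(\upl R)\ne 0$, and finally applies Proposition~\ref{local cohomology comparison} to get $H_\m^i(R)\ne 0$, whence $R_\m$ is not Cohen--Macaulay. Your argument bypasses the dualizing complex entirely and reads the inclusion $H_\m^i(\upl R)\hookrightarrow H_\m^i(R)$ straight off the proposition, which is shorter and just as rigorous. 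The paper's detour has the mild advantage that the Cohen--Macaulay criterion via the normalized dualizing complex is insensitive to subtleties about whether ``Cohen--Macaulay'' is tested only at $\m$; in your write-up that role is played by your remark that $\dim\upl R=\dim R$ together with the standard local-cohomology criterion, which is perfectly adequate here.
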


\begin{proof}
We prove the contrapositive: if $\upl R$ is not Cohen-Macaulay, then neither is $R$. 
Assume that $\upl R$ is not Cohen-Macaulay. Then there is some $0 \le i < \dim R$ with 
$H^{-i}(\upl \cpx I_{\upl R}) \ne 0$. For $a \in |\bMM|$, the cochain complex $[\upl \cpx I_{\upl R}]_a$ 
of $\kk$-vector spaces is isomorphic to the $\kk$-dual of $[\cpx \Cec_{\upl R}]_{-a}$. 
Hence it follows that $H_\m^i(\upl R) \ne 0$. By Proposition~\ref{local cohomology comparison}, we have 
$H_\m^i(R) \ne 0$, and hence the localization $R_\m$ is not Cohen-Macaulay. 
\end{proof}

\begin{prop}\label{R and oR}
For a monoidal complex $\MM= \{ \M_\s\}_{\s \in \cell}$, set $\bMM := \{ \bL_\s \cap C_\s \}_{\s \in \cell}$ as before. 
Let $R:=\kk[\MM]$ and $\oR:=\kk[\bMM]$ be their toric face rings. 
If $R$ is Cohen-Macaulay, then so is $\oR$.  
Moreover, $H_\m^i(\oR) \ne 0$ implies $H_\m^i(R) \ne 0$. 
\end{prop}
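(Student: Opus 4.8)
The plan is to deduce the first assertion from the second and then prove the second by a strand‑by‑strand analysis of the $\LL$-graded \v Cech complex of Proposition~\ref{Cech}. For the first reduction, note that $\oR$ and $R$ are supported by the same CW complex $\cell$, so $\dim\oR=\dim R=d=\dim\cell+1$, and a toric face ring of dimension $d$ is Cohen--Macaulay (at its graded maximal ideal) exactly when $H_\m^i$ vanishes for all $i<d$; hence the implication $H_\m^i(\oR)\ne 0\Rightarrow H_\m^i(R)\ne 0$ for every $i$ gives the first assertion, just as Corollary~\ref{R and uplR} follows from Proposition~\ref{local cohomology comparison}. The idea for the local cohomology statement is to place every nonzero $\LL$-homogeneous component of $H_\m^i(\oR)$ inside $H_\m^i(R)$.

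The heart of the matter is the following. Fix $\tau\in\cell$ and choose $g\in\M_\tau\cap\int(C_\tau)$ — for instance the sum of a finite generating set of $\M_\tau$, which lies in the relative interior — so that $\supp(g)=\tau$; by the isomorphism $\M_\tau\cong\M_\s\cap\i_{\s,\tau}(C_\tau)$ of the monoidal complex we then have $g\in\M_\s\subseteq\ZZ\M_\s$ for every $\s\ge\tau$. Using the $\LL$-graded structure of the localizations $T_\s^{-1}R$ (as in the proof of Proposition~\ref{local cohomology comparison} and \cite[Lemma~2.1]{OY}), I would check that $[T_\s^{-1}R]_{-g}$ is one-dimensional — spanned by the class of $x^{a-g}/x^a$ for any $a\in\M_\s$ with $a-g\in\M_\s$, e.g.\ $a=2g$ — when $\s\ge\tau$, and is $0$ otherwise, and that the natural maps $T_\s^{-1}R\to T_\t^{-1}R$ carry generator to generator on these strands. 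Consequently $[\cpx \Cec_R]_{-g}$, as a complex of $\kk$-vector spaces, is the complex with an $\s$-summand $\kk$ in cohomological degree $\dim\s+1$ for each $\s\ge\tau$ and differential read off from the incidence function $\e$ of $\cell$, and this complex depends only on $\tau$. The identical computation for $\oR$ — whose underlying lattices are again the $\bL_\s$, and for which $\ZZ\tM_\s\cap C_\s=\bL_\s\cap C_\s$ — shows that $[\cpx \Cec_{\oR}]_{-a}$ is precisely this same complex for every $a\in|\bMM|$ with $\supp(a)=\tau$. Hence $[\cpx \Cec_R]_{-g}=[\cpx \Cec_{\oR}]_{-a}$, and Proposition~\ref{Cech} yields $[H_\m^i(R)]_{-g}\cong[H_\m^i(\oR)]_{-a}$ for all $i$. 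I expect this identification of the two \v Cech strands to be the step demanding the most care: one must pin down that in these particular $|\bMM|$-degrees the localizations are one-dimensional with the transition maps acting as the identity, so that the strands literally coincide; the rest is bookkeeping with the poset $\cell$ and the support function $\supp$.

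To conclude, observe that each $\kk[\tM_\s]=\kk[\bL_\s\cap C_\s]$ is a normal, hence seminormal, affine semigroup ring, so $\oR$ is seminormal by Proposition~\ref{Ngu}; applying Lemma~\ref{seminormal locla cohomology} to $\oR$ (for which the relevant degree set is again $|\bMM|$, since $\bL_\s\cap C_\s$ already spans $\bL_\s$ over $C_\s$) gives $H_\m^i(\oR)=[H_\m^i(\oR)]_{-|\bMM|}$ for all $i$. Thus if $H_\m^i(\oR)\ne 0$ we may pick $a\in|\bMM|$ with $[H_\m^i(\oR)]_{-a}\ne 0$; setting $\tau:=\supp(a)$ and taking $g\in\M_\tau\cap\int(C_\tau)$ as above, the previous paragraph gives $[H_\m^i(R)]_{-g}\cong[H_\m^i(\oR)]_{-a}\ne 0$, whence $H_\m^i(R)\ne 0$, which is what we want. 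Alternatively one could first treat a seminormal $R$ and then bootstrap via the seminormalization $\upl R=\kk[\upl\MM]$, whose cone-wise normalization is again $\oR$, using Corollary~\ref{R and uplR} and Proposition~\ref{local cohomology comparison}; but the argument above works for an arbitrary $\MM$ directly.
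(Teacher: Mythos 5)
Your argument is correct, but it takes a genuinely different route from the paper. The paper first reduces to the case that $R$ is seminormal (via Proposition~\ref{local cohomology comparison} and Corollary~\ref{R and uplR}), invokes Theorem~\ref{main} to identify $\upl\cpx I_R$ with $\cpx D_R$, and then proves Lemma~\ref{final lemma}: if $H^i(\cpx I_{\oR})\ne 0$ in some degree $a$, then squarefreeness of $H^i(\cpx I_{\oR})$ lets one move to a degree $b\in\M_\s$ with $\supp(b)=\supp(a)$, where $[\upl\cpx I_R]_b=[\cpx I_{\oR}]_b$ as complexes of $\kk$-vector spaces. Your proof is the Matlis-dual mirror of this last step, carried out directly on the $\LL$-graded \v Cech complex: the strand $[\cpx\Cec_R]_{-g}$ for $g\in\M_\tau\cap\int(C_\tau)$ is the incidence complex of the subposet $\cell_{\ge\tau}$ (one copy of $\kk$ per $\s\ge\tau$, identity transition maps), and the same holds for $[\cpx\Cec_{\oR}]_{-a}$ whenever $\supp(a)=\tau$; your appeal to ``the strand depends only on $\supp$'' plays exactly the role that squarefreeness plays in the paper. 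What your route buys is the elimination of the reduction to seminormal $R$ and of all the dualizing-complex machinery (Theorem~\ref{main}, local duality, $\Sq R$): you only need Lemma~\ref{seminormal locla cohomology} applied to the cone-wise normal (hence seminormal) ring $\oR$ to know that $H_\m^i(\oR)$ is concentrated in degrees $-|\bMM|$, which is legitimate. What it costs is that the two computational claims you flag must actually be carried out: that $[T_\s^{-1}R]_{-g}$ is one-dimensional for $\s\ge\tau$ (exhibit $x^{g}/x^{2g}$ and use that graded pieces of the localization are at most one-dimensional, per \cite[Lemma~2.1]{OY}) and that it vanishes for $\s\not\ge\tau$ (this needs the extreme-face argument: if $c\in C_\s$ decomposes as $(c-g)+g$ inside some $C_\t$ with $\t\ge\s$, then $g$ lies in the face $C_\s$, forcing $\tau\le\s$). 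Both checks go through, so the proof is sound; note only that your displayed equality of strands is an isomorphism of complexes rather than a literal equality, which suffices. Your reduction of the Cohen--Macaulay assertion to the local cohomology assertion uses the same characterization (vanishing of $H_\m^i$ for $i<d$) that the paper uses in Corollary~\ref{R and uplR}, so you are on equal footing there.
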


\begin{lem}\label{final lemma}
 With the same notation as in Proposition~\ref{R and oR}, 
 $H^i(\cpx D_{\oR})\ne 0$ implies $H^i(\upl \cpx I_R)\ne 0$. 
\end{lem}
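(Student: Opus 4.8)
The plan is to replace $\cpx D_{\oR}$ by the combinatorial complex $\cpx I_{\oR}$ and then read both sides off their $|\bMM|$-gradings. Since every $\kk[\bL_\s\cap C_\s]=\kk[\tM_\s]$ is a normal semigroup ring, $\oR=\kk[\bMM]$ is cone-wise normal, so Theorem~\ref{sec:ishida} gives $\cpx I_{\oR}\cong\cpx D_{\oR}$ and hence $H^i(\cpx D_{\oR})\cong H^i(\cpx I_{\oR})$. Both $\cpx I_{\oR}$ and $\upl\cpx I_R$ are cochain complexes of $|\bMM|$-graded $\kk$-vector spaces with degree-preserving differentials: the modules $\kk[\tM_\s]$ and $\kk[\ZZ\M_\s\cap C_\s]$ are $|\bMM|$-graded, and each component of $\partial$ is a natural surjection sending $x^b$ to $x^b$ or to $0$. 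Their cohomologies therefore split as $\bigoplus_{a\in|\bMM|}H^i((\cpx I_{\oR})_a)$ and $\bigoplus_{a\in|\bMM|}H^i((\upl\cpx I_R)_a)$, and it suffices to produce, from one $a$ with $H^i((\cpx I_{\oR})_a)\neq0$, some $a'$ with $H^i((\upl\cpx I_R)_{a'})\neq0$. (Note that Theorem~\ref{main} is not needed for this argument; only Theorem~\ref{sec:ishida} is.)

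First I would record the degreewise picture. Fix $a\in|\bMM|$ and put $\tau:=\supp(a)$, so $a\in\bL_\tau\cap\int(C_\tau)$. For a cell $\s$ one has $\kk[\tM_\s]_a\neq0$ iff $a\in\bL_\s\cap C_\s$, which — since $a$ already lies in $\bL_\tau$ — holds precisely when $\s\geq\tau$; moreover each structure map $\pi_{\s',\s}$ restricts in degree $a$ to the identity $\kk\to\kk$ whenever $\s'\geq\tau$, and to $0$ otherwise. Hence $(\cpx I_{\oR})_a$ is exactly the complex that has $\bigoplus_{\s\geq\tau,\ \dim\s=i-1}\kk$ in cohomological degree $-i$ with the incidence differential $\e$ of $\cell$; in particular it depends only on $\tau=\supp(a)$, not on $a$. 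The same computation applied to $\upl\cpx I_R$ shows that $(\upl I_R^{-i})_a$ is the span of the $\s$ with $\dim\s=i-1$, $\s\geq\tau$ and $a\in\ZZ\M_\s$, with the same incidence entries. By condition~(3) in the definition of a monoidal complex, for $\tau\leq\s$ the linear lift $\tilde{\i}_{\s,\tau}$ identifies $\M_\tau$ with a face of $\M_\s$, so it embeds $\ZZ\M_\tau$ into $\ZZ\M_\s$ compatibly with $\bL_\tau\hookrightarrow\bL_\s$ and with the structure of $\LL$. Consequently, if $a\in\ZZ\M_\tau$ then $a\in\ZZ\M_\s$ for every $\s\geq\tau$, and then $(\upl\cpx I_R)_a=(\cpx I_{\oR})_a$ as complexes of $\kk$-vector spaces.

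Now the lemma follows. Assume $H^i(\cpx D_{\oR})\neq0$; then $H^i(\cpx I_{\oR})\neq0$, so there is $a\in|\bMM|$ with $H^i((\cpx I_{\oR})_a)\neq0$. Set $\tau:=\supp(a)$ and choose $a'\in\ZZ\M_\tau\cap\int(C_\tau)$, which is nonempty because $\M_\tau\cap\int(C_\tau)\neq\emptyset$ and $\M_\tau\subseteq\ZZ\M_\tau$. Then $\supp(a')=\tau$ and $a'\in\ZZ\M_\tau$, so by the previous paragraph $(\upl\cpx I_R)_{a'}=(\cpx I_{\oR})_{a'}=(\cpx I_{\oR})_a$, where the last equality holds because $(\cpx I_{\oR})_a$ depends only on $\supp(a)=\supp(a')$. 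Hence $H^i((\upl\cpx I_R)_{a'})\cong H^i((\cpx I_{\oR})_a)\neq0$, and therefore $H^i(\upl\cpx I_R)\neq0$.

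The only delicate point is the second paragraph: one must check carefully that $(\cpx I_{\oR})_a$ genuinely depends only on $\supp(a)$ — which rests both on the equivalence ``$a\in\tM_\s\Leftrightarrow\s\geq\supp(a)$'' and on the fact that the relevant components of $\partial$ in degree $a$ are literally identity maps of $\kk$, not merely isomorphisms up to a scalar — and that the embeddings $\ZZ\M_\tau\hookrightarrow\ZZ\M_\s$ are compatible with the colimit $\LL$, so that membership ``$a\in\ZZ\M_\tau$'' really is inherited by every $\s\geq\tau$. Once this bookkeeping is pinned down the argument is entirely formal; it also makes visible that $H^i(\cpx D_{\oR})\neq0$ holds iff $H^i$ of the incidence complex $\bigoplus_{\s\geq\tau}\kk$ is nonzero for some cell $\tau$, which is a statement about the topology of the underlying space of $\cell$ and $\chara(\kk)$ only.
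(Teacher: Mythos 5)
Your proof is correct and follows essentially the same route as the paper: replace $\cpx D_{\oR}$ by $\cpx I_{\oR}$ via Theorem~\ref{sec:ishida}, locate a degree $a$ with nonvanishing cohomology, and transport it to a degree in $\ZZ\M_\tau\cap\int(C_\tau)$ where the graded pieces of $\upl\cpx I_R$ and $\cpx I_{\oR}$ coincide as complexes. The only cosmetic difference is that the paper invokes the squarefree-module property of $H^i(\cpx I_{\oR})$ to move between degrees of equal support, whereas you verify directly that the degree-$a$ piece of the complex itself depends only on $\supp(a)$; these amount to the same observation.
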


\begin{proof}
Recall that $\cpx D_{\oR} \cong \cpx I_{\oR}$.   If $H^i(\cpx D_{\oR})( \cong 
H^i(\cpx I_{\oR}) ) \ne 0$, then there is   $a \in |\bMM|$ with 
$[H^i(\cpx I_{\oR})]_a \ne 0$.  
Set $\s:=\supp(a)$ (i.e., $a \in \tM_\s \cap \int(C_\s)$). 
Since $H^i(\cpx I_{\oR})$ is a squarefree $\oR$-module, we have 
 $[H^i(\cpx I_{\oR})]_a \cong  [H^i(\cpx I_{\oR})]_b$ for all $b \in  |\bMM|$ with $\supp (b)=\s$. 

For $b \in \M_\s$ with $\supp (b)=\s$, 
we have $b \in \M_\tau$ for all $\tau \in \cell$ with $\tau \ge \s$. 
In this case, regarding $b \in  \sM \subset |\bMM|$, we have $[\upl \cpx I_R]_b = [\cpx I_{\oR}]_b$ as cochain complexes of $\kk$-vector spaces, and   
hence $[H^i(\upl \cpx I_R)]_b \cong [H^i(\cpx I_{\oR})]_b \ne 0$. 
\end{proof}

\noindent{\it The proof of Proposition~\ref{R and oR}.} 
By Corollary~\ref{R and uplR}, we may assume that $R$ is seminormal. 
Then $\upl \cpx I_R \cong \cpx D_R$ by Theorem~\ref{main}, 
and the assertion  easily follows from  Lemma~\ref{final lemma}.     
\qed

\medskip

Let $R=\kk[\MM]$ be a general toric face ring, $\upl R=\kk[\upl \MM]$ its seminormalization, and $\oR=\kk[\bMM]$.  
Proposition~\ref{R and oR} and Corollary~\ref{R and uplR} state that  
$$\text{$R$ is Cohen-Macaulay} \Longrightarrow \text{$\upl R$ is Cohen-Macaulay} 
\Longrightarrow \text{$\oR$ is Cohen-Macaulay}.$$
By a  result of Caijun~\cite{Cj} (see also \cite{OY}), the Cohen-Macaulay property of $\oR$ is a topological property of the 
underlying space $X$ of $\cell$, while it may depend on $\chara(\kk)$.  

\section*{Acknowledgements}
I am grateful to Professor Yuji Yoshino for giving me valuable comments in the very early stage of this study 
(even before the publication of \cite{OY}). I also thank Professor  Ryota Okazaki for stimulating discussion.

%
%
%

%
%
%
%

\begin{thebibliography}{99}
\bibitem{BG02}
W. Bruns, J. Gubeladze, 
Polyhedral algebras, arrangements of toric varieties, and their groups,  
in: Computational commutative algebra and combinatorics, 
Adv. Stud. Pure Math., vol. 33, 2001, pp. 1--51.

\bibitem{BG}
W. Bruns and J. Gubeladze, Polytopes, rings, and $K$-theory, Springer, 2009. 

\bibitem{BH} 
W. Bruns and J. Herzog,  Cohen-Macaulay rings, revised edition,  
Cambridge University Press, 1998.

\bibitem{BKR}
W. Bruns, R. Koch, and T. R\"omer,
Gr\"obner bases and Betti numbers of monoidal complexes, 
Michigan Math. J.  {\bf 57} (2008), 71-91. 

\bibitem{BLR} W. Bruns, P. Li, T. R\"omer, On seminormal monoid rings, J. Algebra 302 (2006) 361--386. 

\bibitem{Cj} Z. Caijun, 
Cohen-Macaulay section rings,   
Trans. Amer. Math. Soc. 349 (1997), 4659--4667. 


\bibitem{GW}
S. Goto and K. Watanabe, 
On graded rings. II. (${Z}\sp{n}$-graded rings). 
Tokyo J. Math. {\bf 1} (1978), 237--261.  


\bibitem{Ha} R. Hartshorne,
Residues and duality,
Lect. Notes in Math. {\bf 20}, Springer, 1966. 

\bibitem{IR} B. Ichim and T. R\"omer,
On toric face rings, J. Pure Appl. Algebra {\bf 210} (2007), 249--266.


\bibitem{I} M.-N. Ishida, 
The local cohomology groups of an affine semigroup ring, 
in: Algebraic Geometry and Commutative Algebra, vol. I, Kinokuniya, Tokyo, 1988, pp. 141--153. 

\bibitem{MS} E. Miller and B. Sturmfels, 
Combinatorial Commutative Algebra, 
Grad. Texts in Math., Vol. 227,  Springer, 2004. 


\bibitem{Ngu} D.H. Nguyen,  Seminormality and local cohomology of toric face rings,  J. Algebra {\bf 371} (2012) 536--553. 

\bibitem{OY} R. Okazaki and K. Yanagawa, Dualizing complex of a toric face ring,  
Nagoya Math. J {\bf 196} (2009), 87--116. 

\bibitem{RR} L. Reid, L.G. Roberts, Monomial subrings in arbitrary dimension, J. Algebra {\bf 236}  (2001) 703--730. 

\bibitem{SS}
U. Schafer and  P. Schenzel,  Dualizing complexes of affine semigroup rings. Trans. Am. Math. Soc. {\bf 322},  (1990) 561--582.  


\bibitem{St} R.P. Stanley, 
Generalized $H$-vectors, intersection cohomology of toric varieties,
and related results: 
in: Commutative algebra and combinatorics,
Adv. Stud. Pure Math., {\bf 11}, 1987, 187--213. 

\bibitem{Sw} 
R.G. Swan, On seminormality, J. Algebra {\bf 67} (1980) 210--229.
\end{thebibliography}
\end{document}